\numberwithin{equation}{section}
\theoremstyle{plain}
\newtheorem{theorem}{Theorem}[section]
\newtheorem{lemma}[theorem]{Lemma}
\newtheorem{cor}[theorem]{Corollary}
\newtheorem{conj}[theorem]{Conjecture}
\newtheorem{claim}[theorem]{Claim}
\newtheorem{prop}[theorem]{Proposition}
\theoremstyle{definition}
\newtheorem{example}[theorem]{Example}
\newtheorem*{observation}{Observation}
\newcommand{\eps}{\epsilon}
\newcommand{\al}{\alpha}
\newcommand{\be}{\beta}
\newcommand{\de}{\delta}
\newcommand{\el}{\ell}
\newcommand{\ga}{\gamma}
\newcommand{\la}{\lambda}
\newcommand{\Ga}{\Gamma}
\newcommand{\De}{\Delta}
\newcommand{\ka}{\kappa}
\newcommand{\bN}{\ensuremath{\mathbb{N}}}
\newcommand{\bZ}{\ensuremath{\mathbb{Z}}}
\newcommand{\bR}{\ensuremath{\mathbb{R}}}
\newcommand{\cP}{\ensuremath{\mathcal{P}}}
\newcommand{\cC}{\ensuremath{\mathcal{C}}}
\newcommand{\cI}{\ensuremath{\mathcal{I}}}
\newcommand{\cJ}{\ensuremath{\mathcal{J}}}
\newcommand{\cK}{\ensuremath{\mathcal{K}}}
\newcommand{\cT}{\ensuremath{\mathcal{T}}}
\newcommand{\cS}{\ensuremath{\mathcal{S}}}
\newcommand{\cF}{\ensuremath{\mathcal{F}}}
\newcommand{\SF}{\textnormal{SF}}
\newcommand{\dcup}{\mathbin{\mathaccent\cdot\cup}}
\newcommand{\card}[1]{\left| #1 \right|}
\newcommand{\floor}[1]{\left \lfloor #1 \right \rfloor}
\newcommand{\ceil}[1]{\left \lceil #1 \right \rceil}
\newcommand{\Mod}[1]{\ (\mathrm{mod}\ #1)}
\renewcommand{\t}[1]{\tilde{#1}}
\begin{document}
\title{On the structure of large sum-free sets of integers}
\author{Tuan Tran}

\address{Department of Mathematics, ETH, 8092 Zurich}
\email{manh.tran@math.ethz.ch}

\begin{abstract}
A set of integers is called sum-free if it contains no triple $(x,y,z)$ of not necessarily distinct elements with $x+y=z$. In this 
paper, we provide a structural characterisation of sum-free subsets of $\{1,2,\ldots,n\}$ of density at least $2/5-c$, where $c$ is an absolute positive constant.
As an application, we derive a stability version of Hu's Theorem [Proc.~Amer.~Math.~Soc. {\bf 80} (1980), 711--712] about the maximum size of a union of two sum-free sets in $\{1,2,\ldots,n\}$. We then use this result to show that the number of subsets of $\{1,2,\ldots,n\}$ which can be partitioned into two sum-free sets is $\Theta(2^{4n/5})$, confirming a conjecture of Hancock, Staden and Treglown [arXiv:1701.04754].
\end{abstract}

\maketitle

\section{Introduction}
A triple $(x,y,z)$ of not necessarily distinct integers is called a {\bf Schur triple} if $x+y=z$. Given a positive integer $r$, we say that a subset $A$ of $[n]:=\{1,2,\ldots,n\}$ is $r$-{\bf wise sum-free} if there exists an $r$-colouring of $A$ which contains no monochromatic Schur triples. When $r=1$, we simply call such sets {\bf sum-free}. 
Here we derive a structural theorem for large sum-free sets, and apply it to prove a sharp bound, up to a constant factor, on the number of $2$-wise sum-free subsets of $[n]$. 
In the following subsections we will review what is already known before presenting our results.

\subsection{Sum-free sets and their structure}
A natural extremal question, which was asked by Abbott and Wang \cite{Abbott-Wang77} in 1977, is how large an $r$-wise sum-free subset of $[n]$ can be. We denote the maximum by $\mu(n,r)$. It is not difficult to see that ${\mu(n,1)=n-\floor{n/2}}$, and this bound is attained by the set of odd numbers in $[n]$ and by the interval ${\{\floor{n/2}+1,\ldots,n\}}$. 
The following definition helps motivate the study of $\mu(n,r)$ for $r\ge 2$. Let $h(r)$ denote the largest positive integer $m$ for which there exists some way of partitioning $[m]$ into $r$ sets that are sum-free modulo $m+1$. For example, one has $h(2)=4$, $h(3)=13$ and $h(4)=44$ (see \cite[Problem I]{Abbott-Wang77}).
Abbot and Wang \cite{Abbott-Wang77} showed 
\[
\mu(n,r)\ge n-\floor{\frac{n}{h(r)+1}}\]
for every integer $r\ge 2$, and conjectured that the equality holds. In 1980, Hu \cite{Hu80} provided a short and elegant proof of this conjecture for $r=2$, that is, $\mu(n,2)=n-\floor{n/5}$.
To see why $\mu(n,2)\ge n-\floor{n/5}$, one can consider the set $\{a\in [n]:a\equiv 1, 4 \Mod{5}\} \cup \{b\in [n]:b\equiv 2, 3 \Mod{5}\}$. For $r\ge 3$, though there are several interesting general upper bounds for $\mu(n,r)$ (see \cite{Abbott-Wang77,HST17}), none of them matches the lower bound given by Abbot and Wang.

Given the extremal result, great efforts has been made to better understand the general structure of large sum-free sets in $[n]$. The earliest result in this direction was obtained by Freiman \cite{Freiman92} who showed that, loosely speaking, a sum-free set of density greater than $5/12$ is `interval like' or consists entirely of odd numbers.

\begin{theorem}[Freiman]\label{thm:Freiman-large-sum-free}
Every sum-free subset $A$ of
$[n]$ with $\card{A} \ge 5n/12+2$ satisfies one of the following conditions:
\begin{itemize}
\item[(i)] $A$ consists of odd numbers;
\item[(ii)] the minimum element of $A$ is at least $\card{A}$.
\end{itemize}	
\end{theorem}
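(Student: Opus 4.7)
The plan is to perform a case analysis on $a = \min A$, and for small $a$, to separate $A$ by parity. Writing $k = |A|$, I would first record the disjointness of $A \cap [2a, n]$ and $a + (A \cap [a, n-a])$ inside $[2a, n]$, which follows from sum-freeness together with $a \in A$. A short case split yields: when $a > n/3$, one obtains $|A| \le a$, immediately giving condition (ii); when $a \le n/3$, one derives $|A \cap [a, 2a-1]| + |A \cap [2a, 3a-1]| \le a$ and hence $|A| \le n - 2a + 1$, which combined with $k \ge 5n/12 + 2$ forces $a \le 7n/24 - 1/2$. Since $7n/24 < 5n/12 \le k$, condition (ii) cannot hold in this regime, and it remains to establish (i).

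Suppose for contradiction that $A$ contains an even element. First, $A$ cannot be entirely even: otherwise $A/2 \subseteq [\lfloor n/2 \rfloor]$ would be sum-free of size $k$, contradicting the fact that $\mu(\lfloor n/2\rfloor, 1) = \lceil n/4 \rceil < 5n/12 + 2$. Hence $A = A_o \dcup A_e$ with both the odd part $A_o$ and the even part $A_e$ nonempty; set $p = |A_o|$ and $q = |A_e|$. Sum-freeness gives three constraints: (C1) $A_e/2$ is sum-free in $[\lfloor n/2 \rfloor]$, so $q \le \lceil n/4 \rceil$; (C2) $(A_o + A_o) \cap A_e = \emptyset$ inside $[1, n]$, since sums of two odds are even; and (C3) for every $e \in A_e$, $A_o$ and $A_o + e$ are disjoint subsets of the odd numbers in $[1, n+e]$. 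If $A_o \subseteq [1, n/2]$, then (C2) together with $|A_o + A_o| \ge 2p - 1$ gives $2p + q \le \lfloor n/2 \rfloor + 1$, whence $|A| \le 3n/8 + O(1) < 5n/12 + 2$, a contradiction. Otherwise $\max A_o > n/2$, and one splits $A_o$ at $n/2$ into low and high parts $A_o^L, A_o^H$; the sumset $A_o^L + A_o^L \subseteq [2, n]$ still produces at least $2|A_o^L| - 1$ forbidden even positions via (C2), while (C3) applied at $\min A_e$ forces $|A_o^H|$ to be small relative to $\min A_e$. The resulting linked system of inequalities on the triple $(|A_o^L|, |A_o^H|, |A_e|)$ then yields $|A| < 5n/12 + 2$.

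The hardest step is the final subcase above, where $A_o$ straddles $n/2$: the sumset bound $|A_o + A_o| \ge 2p - 1$ is partially wasted on pairs summing to more than $n$, and closing the gap to the sharp $5/12$ threshold requires a sharpened count of $(A_o + A_o) \cap [1, n]$ combined with a careful use of (C3) across multiple elements of $A_e$ to handle the boundary effects near $n$. I expect this to force a sub-case where $A_o^L$ is forced into an approximate arithmetic progression (making the sumset bound nearly tight), at which point an additional structural argument pins down the configuration and completes the contradiction.
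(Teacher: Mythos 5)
The paper does not prove this theorem; it is quoted from Freiman~\cite{Freiman92}, so there is no in-paper argument to compare against. Evaluating your proposal on its own terms: the preliminary reduction is sound. If $a=\min A>n/3$, then $3a-1\ge n$ and Lemma~\ref{lem:long-interval}(ii) applied at $u=a$ gives $|A|\le a$, so (ii) holds. If $a\le n/3$, the same lemma gives $|A|\le a+(n-3a+1)=n-2a+1$, which with $|A|\ge 5n/12+2$ yields $a\le 7n/24-1/2<|A|$, so (ii) fails and you must establish (i). The all-even reduction and the three constraints (C1)--(C3) are correct, and the case $A_o\subseteq[1,n/2]$ is closed cleanly: $2p+q\le\lfloor n/2\rfloor+1$ and $q\le\lceil n/4\rceil$ do force $|A|\le 3n/8+O(1)$, a contradiction.

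The genuine gap is the case $\max A_o>n/2$, which you flag yourself but do not prove. As written, ``I expect this to force a sub-case where $A_o^L$ is forced into an approximate arithmetic progression, at which point an additional structural argument pins down the configuration'' is a plan, not an argument; nothing in the proposal identifies what that structural argument is, nor bounds $|A_o^H|$ in a way that interacts with $|A_e|$ tightly enough to reach the sharp $5/12$ threshold. The difficulty is real: the paper itself exhibits sum-free sets of density $3/8$ with both parities present and $A_o$ spread across all of $[n]$ (e.g.\ $\{a\in[n]:a\equiv 4,5,6\Mod 8\}$), so the crude sumset bound $|A_o^L+A_o^L|\ge 2|A_o^L|-1$ together with (C1) only gets you to roughly $3n/8$, leaving a deficit of about $n/24$ that must be recovered from (C3) and a refined count of $(A_o+A_o)\cap[1,n]$. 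Closing that deficit is exactly where Freiman's proof does its work, and it is the part your proposal omits. Until that subcase is carried out, the proof is incomplete.
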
 
In an unpublished note, Deshouillers, Freiman and S\'os proved that the conclusion of Theorem \ref{thm:Freiman-large-sum-free} continues to hold when $5n/12+2$ is replaced by $2n/5+1$. The following examples show that the condition $\card{A}\ge 2n/5+1$ cannot be relaxed. Indeed, supposing that $n$ is divisible by $5$, we consider the sets $A_1=\{a\in [n]:a\equiv 1,4 \Mod{5}\}$, $A_2=\{a\in [n]:a\equiv 2,3 \Mod{5}\}$, and $A_3=\{n/5+1,\ldots, 2n/5\}\cup \{4n/5+1,\ldots,n\}$. We can see that each $A_i$ is a sum-free subset of $[n]$ of size $2n/5$, and that they are very far from satisfying property (i) or (ii) from Theorem \ref{thm:Freiman-large-sum-free}. 

A few years later, Deshouillers, Freiman, S\'os and Temkin \cite{DFST99} succeeded in slightly breaking the $2n/5$ barrier (see Theorem \ref{thm:DFST} below). Roughly speaking, they proved that the structure of a sum-free set in $[n]$ of size greater than $2n/5-O(1)$ is described by Theorem \ref{thm:Freiman-large-sum-free}, or close to one of the sets $A_i$ mentioned previously.\footnote{Their result provides no information about sum-free sets in $[n]$ of size less than $2n/5-\sqrt{n}$.}

\begin{theorem}[Deshouillers--Freiman--S\'os--Temkin]
\label{thm:DFST}
For every $x>0$, there exist numbers $n_0\in \bN$ and $K>0$ such that whenever $A$ is a sum-free set in $[n]$ satisfying $n \ge n_0$ and $|A| \ge 2n/5-x$, then $A$ has one of the following properties:
\begin{itemize}
		\item[(i)] all the elements of $A$ are odd;
		\item[(ii)] all the elements of $A$ are congruent to $1$ or $4$ modulo $5$;
		\item[(iii)] all the elements of $A$ are congruent to $2$ or $3$ modulo $5$;
		\item[(iv)] the minimum element of $A$ is greater than or equal to $\card{A}$;
		\item[(v)] $A$ is contained in $\left[\frac{n}{5}-K,\frac{2n}{5}+K\right] \cup \left[\frac{4n}{5}-K,n\right]$.
	\end{itemize} 
\end{theorem}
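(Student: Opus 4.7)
The plan is to bootstrap from the Deshouillers-Freiman-S\'os strengthening of Theorem \ref{thm:Freiman-large-sum-free} quoted just before the statement: that result already handles sum-free sets of size at least $2n/5+1$ and returns conclusion (i) or (iv). I would therefore restrict attention to $\card{A} \in [2n/5-x,\, 2n/5]$ and show that each such $A$ is either of the form (i) or (iv), or lies within constant Hamming distance of one of the three critical examples $A_1, A_2, A_3$ displayed after Theorem \ref{thm:Freiman-large-sum-free}; these examples correspond precisely to the remaining conclusions (ii), (iii), and (v).

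The main step will be a residue analysis modulo $5$. Set $a_i = \card{A \cap (5\bZ+i)}$ for $i \in \{0,1,2,3,4\}$, so $\sum_i a_i \ge 2n/5-x$, and each $a_i \le n/5+1$. The sum-free condition yields inequalities of the form $a_i + a_j + a_k \le 3n/5 + O(1)$ for every Schur triple $(i,j,k)$ in $\bZ_5$, via Hu's extremal bound applied within each relevant window of $[n]$. Solving the resulting linear program should force the residue mass of $A$ to concentrate either on a sum-free pair in $\bZ_5$ (namely $\{1,4\}$ or $\{2,3\}$), on the odd residues, or on a short list of degenerate configurations.

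It then remains to convert residue concentration into the stated geometric structure. If almost all of $A$ sits in the odd numbers, a stability version of Theorem \ref{thm:Freiman-large-sum-free} (or a direct repetition of Freiman's argument) yields (i). If almost all of $A$ sits in residues $\{1,4\}$ or $\{2,3\}$ modulo $5$, a removal argument exploiting the abundance of Schur triples produced by any wrong-residue element together with the bulk should show that no such elements exist, giving (ii) or (iii). In the remaining case a positive fraction of $A$ lies in $5\bZ$; here I would show that any such element outside the window $[n/5-K,\, 2n/5+K] \cup [4n/5-K,\, n]$ combines with the bulk to produce a forbidden Schur triple, forcing either (v) or $\min A \ge \card{A}$, and hence (iv).

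The main obstacle will be this last conversion step: quantifying the stability of the extremal interval structure with an \emph{absolute} constant $K$ independent of $x$. This is essentially asking for a sharp stability theorem around the example $A_3$, and will likely require either a refined Fourier analysis of $\widehat{1_A}$ beyond the dominant modes at frequencies $k/5$, or a delicate removal argument that rules out hybrid configurations in which part of $A$ follows the mod-$5$ pattern while another part lives in the top interval $[4n/5, n]$. Handling these hybrids -- where neither the arithmetic nor the geometric structure is dominant -- is what makes the full classification substantially harder than the $\card{A} \ge 2n/5+1$ range treated by the DFS strengthening.
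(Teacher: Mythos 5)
This statement is Theorem~\ref{thm:DFST} of Deshouillers, Freiman, S\'os and Temkin, which the paper quotes from \cite{DFST99} and does not reprove; there is therefore no proof in the paper to compare against, though the paper's own stronger result (Theorem~\ref{thm:structure}) reveals the machinery that the argument actually requires.

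Your proposal has a genuine gap in the residue-LP step, and the gap is not of the ``needs more work'' kind: the claimed constraints are vacuous, and residue analysis mod $5$ is structurally the wrong tool here. Each $a_i$ satisfies $a_i \le n/5+1$ trivially, so the inequality $a_i+a_j+a_k \le 3n/5+O(1)$ is automatic for \emph{any} set $A$, sum-free or not, and thus carries no information; the resulting linear program has optimum $n+O(1)$, not $2n/5$. More fundamentally, residue densities mod $5$ cannot distinguish the geometric conclusions (iv) and (v) from the arithmetic ones. The set $A=[3n/5+1,n]$ (an instance of conclusion (iv)), the set $A=(n/5,2n/5]\cup(4n/5,n]$ (the extremal example for (v)), and the set of odd numbers all have $a_i$ approximately uniform across $i \in \{0,1,2,3,4\}$; no constraint on the vector $(a_0,\dots,a_4)$ alone can separate them. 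Proving (iv) or (v) requires information about \emph{where} in $[n]$ the mass of $A$ sits, which is exactly what residue counting discards.

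What makes the theorem work in \cite{DFST99}, and what the present paper uses to push past it, is inverse-theoretic input: a case split on $\min(A)/\max(A)$, Freiman's $3k-4$ theorem and its two-set refinement due to Lev and Smeliansky (Lemma~\ref{thm:Lev-Smeliansky}/\ref{lem:Lev-Smeliansky}), the long-interval counting bounds (Lemma~\ref{lem:long-interval}), and, for the hardest regime, Jin's extension of the $3k-4$ theorem (Lemma~\ref{thm:Jin}) together with a bootstrapping step (Lemma~\ref{lem:bootstrap}). You flag the ``conversion step'' as the main obstacle, and that self-assessment is accurate, but it understates the issue: that conversion \emph{is} the theorem, and the residue-LP scaffolding you build beforehand does not bring you any closer to it. A correct route would need to replace steps (2)--(3) of your plan with an analysis of $|A-A|$ or $|A+A|$ relative to $|A|$ and appeal to an inverse theorem to extract the progression or interval structure, which is a qualitatively different argument from what you sketch.

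Two smaller points. First, conclusion (v) is a containment statement, not a Hamming-distance statement, so ``within constant Hamming distance of $A_3$'' does not match (v); a sparse subset of $(n/5,2n/5]\cup(4n/5,n]$ satisfies (v) but is Hamming-far from $A_3$. Second, even after restricting to $|A|\le 2n/5$, case (iv) remains live (e.g.\ $A=[3n/5+1,n]$), so you cannot treat (iv) as already dispatched by the DFS strengthening of Theorem~\ref{thm:Freiman-large-sum-free}, which only applies at density $\ge 2/5 + 1/n$.
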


Besides being interesting in their own right, these results have found several applications (see \cite{Luczak95,Gr05,BLST15,BLST17}). 
We remark that very few structural results are known for large sum-free sets in finite abelian groups, cf. \cite{DT89,GrRu05,Lev05,DF06,Lev06,DL08,BPR16}.

\subsection{Counting sum-free sets}\label{sec:intro-counting}

Let $\SF_r(n)$ denote the collection of $r$-wise sum-free subsets of $[n]$. By considering all possible subsets of the set $\{\floor{n/2}+1,\ldots,n\}$, we see that $[n]$ contains at least $2^{n/2}$ sum-free sets. Cameron and Erd\H{o}s \cite{Cameron-Erdos} in 1990 conjectured that this trivial lower bound is within a constant factor of the truth, that is, $\card{\SF_1(n)}=O(2^{n/2})$. Their conjecture resisted various attempts at proof for over ten years \cite{Alon91,Calkin90, Freiman92}, until it was confirmed independently by Green \cite{Gr05} and Sapozhenko \cite{Sapozhenko03}. In fact, they proved that there are asymptotically $c(n)2^{n/2}$ such sets, where $c(n)$ takes two different constant values depending on the parity of $n$. Recently, a refinement of the Cameron--Erd\H{o}s conjecture was obtained by Alon, Balogh, Morris and Samotij \cite{ABMS14}, giving an upper bound on the number of sum-free sets in $[n]$ of size $s$, for all $s \in \{1,2,\ldots,\ceil{n/2}\}$.

For $r=2$, recall that the set $\{a\in [n]:a\equiv 1, 4 \Mod{5}\} \cup \{b\in [n]:b\equiv 2, 3 \Mod{5}\}$ is $2$-wise sum-free, and so are all of its subsets, giving $\card{\SF_2(n)}\ge 2^{4n/5}$. Inspired by \cite{Sapozhenko03,Gr05}, Hancock, Staden and Treglown \cite{HST17} considered this counting problem, among other things, and conjectured that this simple bound is in fact the correct estimate on $\card{\SF_2(n)}$. Thus they put forward the following conjecture.

\begin{conj}[Hancock--Staden--Treglown]\label{conj:Hancock-Staden-Treglown}
$\card{\SF_2(n)}=O(2^{4n/5})$.
\end{conj}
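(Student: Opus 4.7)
The plan is to derive Conjecture \ref{conj:Hancock-Staden-Treglown} from the stability version of Hu's theorem that the paper derives from its main structural result. The lower bound $|\SF_2(n)|\ge 2^{\lceil 4n/5\rceil}$ is immediate: every subset of $T^\ast:=\{a\in[n]:a\not\equiv 0\Mod{5}\}$ is $2$-wise sum-free via the colouring that places residues $1,4\Mod{5}$ in one class and residues $2,3\Mod{5}$ in the other. For the matching upper bound, I would combine a container-type estimate with the stability statement.

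As a preliminary, I would establish the rough bound $|\SF_2(n)|\le 2^{4n/5+o(n)}$ by applying the hypergraph-container framework to the Schur-triple hypergraph (in the ``coloured independent set'' formulation of Saxton--Thomason). This yields at most $2^{o(n)}$ containers, each of size at most $4n/5+o(n)$, such that every $A\in\SF_2(n)$ is a subset of some container. Containers of size less than $(4/5-\delta)n$ then contribute only $2^{(4/5-\delta)n+o(n)}=o(2^{4n/5})$ for any fixed $\delta>0$, so it suffices to control $A\in\SF_2(n)$ with $|A|>(4/5-\delta)n$.

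For such $A$, write $A=A_1\dcup A_2$ with $A_1,A_2$ sum-free. At least one of them, say $A_1$, satisfies $|A_1|\ge 2n/5-\delta n/2>(2/5-c)n$ once $\delta<2c$, so the paper's structural theorem applies. It places $A_1$ into one of a short list of families generalising Theorem \ref{thm:DFST} (odd numbers; $\{a\equiv 1,4\Mod{5}\}$; $\{a\equiv 2,3\Mod{5}\}$; interval-like on $[n/5-K,2n/5+K]\cup[4n/5-K,n]$; or $\min A_1\ge |A_1|$). A case analysis, using the sum-freeness of $A_2\subseteq[n]\setminus A_1$ and $|A_2|\ge (4/5-\delta)n-|A_1|$, eliminates all combinations except (up to swapping) $A_1\subseteq\{a\equiv 1,4\Mod{5}\}$ and $A_2\subseteq\{a\equiv 2,3\Mod{5}\}$, modulo $O(\delta n)$ exceptional elements. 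This yields the stability statement $|A\triangle T^\ast|\le \eps n$ for some $\eps=\eps(\delta)\to 0$ as $\delta\to 0$.

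To finish, subsets of $T^\ast$ alone contribute $2^{\lceil 4n/5\rceil}=\Theta(2^{4n/5})$ to $\SF_2(n)$. The remaining $A$ (which must contain at least one multiple of $5$) are counted via a Sapozhenko-style graph-container argument: each exceptional element $x=5k\in A\cap A_i$ forbids every pair $\{a,5k-a\}\subseteq A_i\cap T^\ast$ and every pair $\{a,a+5k\}\subseteq A_i\cap T^\ast$, each a nontrivial arithmetic constraint that, on average, halves the number of admissible completions of $A\cap T^\ast$. Summing over all choices of the exceptional set then gives a geometric series in the number of exceptions and contributes only $O(2^{4n/5})$. The main obstacle is precisely this final accounting: the naive bound $\binom{n}{\le \eps n}\cdot 2^{|T^\ast|}=2^{4n/5+o(n)}$ falls short of the target by a polynomial factor, and removing it requires a genuine exploitation of the arithmetic obstructions created by each exceptional element, in the spirit of (but more intricate than) the Green--Sapozhenko resolution of the Cameron--Erd\H{o}s conjecture.
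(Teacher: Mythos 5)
Your general strategy---hypergraph containers to reduce to the case of large containers, a stability theorem to pin down their structure, and then a refined count exploiting arithmetic obstructions---is indeed the paper's strategy. However, there is a genuine gap at the heart of your case analysis.

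You claim that applying Theorem~\ref{thm:structure} to $A_1$ and $A_2$ ``eliminates all combinations except (up to swapping) $A_1\subseteq F_{1,4}$ and $A_2\subseteq F_{2,3}$, modulo $O(\delta n)$ exceptional elements,'' concluding $|A\triangle T^*|\le\eps n$. This is false. There is a second, genuinely distinct near-extremal configuration: $A_1\approx \left(n/5,2n/5\right]\cup\left(4n/5,n\right]$ and $A_2\approx\left(2n/5,4n/5\right]$. Both pieces are sum-free, they are disjoint, and their union has size roughly $4n/5$, yet $A=A_1\dcup A_2$ contains essentially every multiple of $5$ in $\left(n/5,n\right]$ and hence is nothing like $T^*$. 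This is exactly alternative (v) of Theorem~\ref{thm:structure}, and it is case (ii) of the paper's stability statement, Proposition~\ref{prop:stability}. A correct case analysis cannot eliminate it, because it really is there.

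This matters because the interval-like configuration turns out to be the hard one to count. For the arithmetic configuration $T^*$, a forbidden-pairs argument in the spirit of Cameron--Erd\H{o}s suffices (the paper's Lemma~\ref{lem:arithmetic-containers} wins a factor $3^{n/20}$ over $4^{n/20}$), and your Sapozhenko-style sketch is pointed roughly in that direction. But the interval case has no useful modular structure, and the paper's count of these sets (Lemma~\ref{lem:continuous-containers}) is by far the most technical part of the whole argument, combining the Green--Morris bound on the number of sets with small sumset, Janson's inequality, Pl\"unnecke's inequality, and a restricted-partition count. Nothing in your proposal addresses or even foreshadows this, so as written the bound $\card{\SF_2(n)}=O(2^{4n/5})$ does not follow.

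Two smaller issues. First, the step ``so it suffices to control $A\in\SF_2(n)$ with $|A|>(4/5-\delta)n$'' is a non sequitur: a container of size $>(4/5-\delta)n$ contains many sets $A$ of much smaller size, for which $|A_1|,|A_2|$ need not be near $2n/5$ and the structural theorem is inapplicable. One must classify the large \emph{containers} themselves. Second, containers are not sum-free, only nearly so; before Theorem~\ref{thm:structure} can be applied one needs an arithmetic removal lemma (Green's) to replace each container component by a nearby genuinely sum-free set. The paper packages both repairs in Corollary~\ref{cor:small-containers}.
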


Note that Hancock et al. applied the {\bf container theorems} of  Balogh, Morris and Samotij \cite{BMS}, and Saxton and Thomason \cite{ST}, to establish $\card{\SF_2(n)}=2^{4n/5+o(n)}$. We recommend \cite{HST17,HT17} and the references therein for related results concerning $\mathcal{L}$-free subsets of $[n]$, where $\mathcal{L}$ is a homogeneous system of linear equations.

\subsection{Our results}
Here we go one step beyond Theorem \ref{thm:DFST}, and provide a structural characterisation of sum-free sets of size greater than $(2/5-c)n$, where $c$ is an absolute positive constant.
 
\begin{theorem}\label{thm:structure}
There exists an absolute positive constant $c$ so that the following holds for every $n\in \bN$ and every $\eta \in \bR$ with $2/n \le \eta \le c$. Let $A$ be a sum-free subset of $[n]$ with $\card{A} \ge (2/5-\eta)n$. Then one of the following alternatives occurs:
	\begin{itemize}
		\item[(i)] all the elements of $A$ are odd;
		\item[(ii)] all the elements of $A$ are congruent to $1$ or $4$ modulo $5$;
		\item[(iii)] all the elements of $A$ are congruent to $2$ or $3$ modulo $5$;
		\item[(iv)] the minimum element of $A$ is greater than or equal to $\card{A}$;
		\item[(v)] $A$ is contained in $\left[\left(\tfrac15-200\sqrt{\eta}\right)n,\left(\tfrac25+200\sqrt{\eta}\right)n\right] \cup \left[\left(\tfrac45-200\sqrt{\eta}\right)n,n\right]$.
	\end{itemize} 
\end{theorem}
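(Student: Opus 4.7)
The plan is to upgrade Theorem~\ref{thm:DFST} to the quantitative form stated here by a stability argument, carried out in three phases. \emph{Phase 1} removes the arithmetic alternatives. Let $A$ be sum-free with $|A|\geq(2/5-\eta)n$, and suppose that for some modulus $d\in\{2,5\}$ all but $\eta^{1/3}n$ of the elements of $A$ lie in one of the canonical sum-free unions of residue classes mod $d$ (odd, or $\{1,4\}\bmod 5$, or $\{2,3\}\bmod 5$). I would exploit the fact that such a dense, arithmetically structured $A$ already has enough pairwise sums to cover nearly every element of $[n]$ lying in the complementary residues; any single element of $A$ outside the distinguished residues would then immediately produce a Schur triple. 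This cleanup promotes ``almost all'' to ``all'' and yields alternative (i), (ii), or (iii).

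\emph{Phase 2} handles the generic case. Assume no such arithmetic concentration. The sum-free condition still forces $|(A+A)\cap[n]|\leq n-|A|\leq(3/5+\eta)n$. By re-examining the Fourier/Freiman analysis underlying Theorem~\ref{thm:DFST} with uniform dependence on the density deficit $\eta$, one should be able to conclude that $A$ either satisfies $\min(A)\geq|A|$ (alternative (iv)) or is approximately contained in the two-interval configuration near $[n/5,2n/5]\cup[4n/5,n]$.

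\emph{Phase 3} sharpens the spillover to the explicit slack $200\sqrt{\eta}\,n$ of alternative (v). Suppose $A$ has $s$ elements $a$ outside the $\delta$-fattening of the two intervals with $\delta=200\sqrt{\eta}$. For each such $a$, the translate $A+a$ must be disjoint from $A$ inside $[n]$; placing $a$ outside the two intervals forces $A+a$ to miss the ``heavy'' part of the configuration by at least $\delta n$ elements, and hence costs $A$ roughly $\delta n$ density per bad $a$. A double counting then yields $s\cdot\delta n\lesssim\eta n^2$, whence $s\lesssim\sqrt{\eta}\,n$, as required.

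The main obstacle is Phase~2: Theorem~\ref{thm:DFST} permits only a constant shortfall $x$ below $2n/5$, and the constant $K$ there depends (implicitly) on $x$. In our regime the shortfall is $\eta n$, which grows with $n$, so DFST cannot be applied as a black box. One must instead redo its internal Fourier/Freiman analysis with bounds uniform in $\eta$; this is where the bulk of the technical work lies, and the $\sqrt{\eta}$ scaling in alternative (v) reflects the quadratic nature of the underlying sum-free constraint.
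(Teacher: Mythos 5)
The proposal correctly diagnoses the central difficulty --- Theorem~\ref{thm:DFST} only allows a constant shortfall below $2n/5$, so it cannot be applied as a black box when the deficit scales like $\eta n$ --- but it then defers essentially all of the work to Phase~2 (``redo its internal Fourier/Freiman analysis with bounds uniform in $\eta$''), which \emph{is} the content of Theorem~\ref{thm:structure}. Stating that the bulk of the work lies there is not the same as doing it, and the proposal gives no indication of how the uniform bounds would actually be extracted. This is the genuine gap.

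A few concrete points of divergence and concern. First, the paper's proof does not proceed by separating ``arithmetically concentrated'' from ``generic'' $A$; it splits on the size of $m=\min(A)$ relative to $N=\max(A)$ into three regimes (large: $m\ge(1/5-\sqrt{\eta})N$; middle: $35\sqrt{\eta}N\le m\le(1/5-\sqrt{\eta})N$; small: $m\le 35\sqrt{\eta}N$), handled by Lemmas~\ref{lem:large}, \ref{lem:middle}, \ref{lem:small} respectively. The genuinely new and delicate regime is the small one, and there the key tool is not Fourier analysis but a pair of inverse theorems for small doubling --- the Lev--Smeliansky / Freiman $3k{-}4$ theorem (Lemma~\ref{thm:Lev-Smeliansky}) and, crucially, Jin's theorem (Lemma~\ref{thm:Jin}) for sets with $|A+A|=3|A|+o(|A|)$ --- together with a bootstrapping lemma. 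Your Phase~1 ``cleanup'' step does appear in the paper, but only as the final, easy bootstrap at the end of Lemma~\ref{lem:small}; the substance is getting the $50\%$-structured description of $A\cap[n/2]$ in the first place, which your sketch does not address.

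Second, your Phase~3 does not prove alternative (v). Alternative (v) requires $A$ to be \emph{entirely} contained in the $200\sqrt{\eta}\,n$-fattened two-interval set, i.e.\ zero exceptions. The double-counting you describe, $s\cdot\delta n\lesssim\eta n^2$ with $\delta=200\sqrt{\eta}$, gives $s\lesssim\sqrt{\eta}\,n$, which is $\Theta(n)$ possible exceptional elements --- that is a weaker, approximate containment and not what the theorem claims. In the paper's proof of Lemma~\ref{lem:large} the corresponding final step is qualitatively different: one first proves an approximate structure statement (Claim~\ref{claim:large-approximate}, which says the heavy part of the interval configuration is nearly full of $A$), and then shows that \emph{a single} element of $A$ outside the fattened intervals forces a Schur triple by pigeonhole, so that $s=0$ exactly. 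If you keep your Phase~3, it must be upgraded from a density count to a single-exception contradiction; as written it does not close.

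Finally, a small but important reduction you have not mentioned: before entering the case analysis one must rule out $d(A)>1$ (other than $d(A)=2$ landing you in alternative (i)), and verify $\eta\ge 1/N$ so the lemmas can be applied with $n$ replaced by $N=\max(A)$; the paper handles this at the start of the proof of Theorem~\ref{thm:structure}.
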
 

Note that there are sum-free subsets of $[n]$ of density $3/8$ structurally different from those appeared in the above theorem, such as $\{a \in [n]:a\equiv 3,4,5 \Mod{8}\}$ and $\{a \in [n]:a\equiv 4,5,6 \Mod{8}\}$. As an application of Theorem \ref{thm:structure}, we derive a stability version of Hu's result (Proposition \ref{prop:stability}), which may be of independent interest.

The proof of Theorem \ref{thm:structure} draws on a number of ideas from \cite{DFST99}. In particular, as in \cite{DFST99} we make use of an inverse theorem of Lev and Smeliansky \cite{Lev-Smeliansky95} for subsets of integers with small difference set. We also develop a number of new ideas in order to deal with the case when the smallest element of $A$ is sublinear in $n$, thereby making the argument substantially more involved. 

The second part of the paper deals with Conjecture \ref{conj:Hancock-Staden-Treglown}. We show 
\[
\card{\SF_2(n)}=O(2^{4n/5}),
\]
settling the conjecture in the affirmative. 
  
\begin{theorem}\label{thm:counting}
The number of $2$-wise sum-free subsets of $[n]$ is $O(2^{4n/5})$.
\end{theorem}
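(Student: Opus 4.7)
The strategy is to refine the Hancock--Staden--Treglown bound $|\SF_2(n)|=2^{4n/5+o(n)}$ to the sharp $O(2^{4n/5})$ by combining the container method with the stability version of Hu's theorem (Proposition~\ref{prop:stability}) established via Theorem~\ref{thm:structure}. First I would re-run the hypergraph container argument from \cite{HST17}, based on the container theorems of \cite{BMS,ST}, to obtain a family $\mathcal{C}$ of at most $2^{o(n)}$ subsets of $[n]$ with the properties that every $A\in\SF_2(n)$ is contained in some $C\in\mathcal{C}$, that $|C|\le(4/5+o(1))n$ for every $C\in\mathcal{C}$, and that each $C$ is itself (essentially) $2$-wise sum-free. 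Summing $2^{|C|}$ over $\mathcal{C}$ recovers only the Hancock--Staden--Treglown estimate.

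To remove the $2^{o(n)}$ loss, fix the small constant $\eps>0$ supplied by Proposition~\ref{prop:stability}, and partition $\mathcal{C}$ at the threshold $(4/5-\eps)n$. Containers with $|C|\le(4/5-\eps)n$ contribute at most $2^{o(n)}\cdot 2^{(4/5-\eps)n}=o(2^{4n/5})$ subsets and may be discarded. Every container above the threshold is itself a $2$-wise sum-free set of size at least $(4/5-\eps)n$, so Proposition~\ref{prop:stability} forces it to lie inside a bounded list of canonical templates --- essentially $\{a\in[n]:a\not\equiv 0 \Mod{5}\}$ together with the Freiman-type interval analogues inherited from Theorem~\ref{thm:structure} --- each possibly enlarged by a perturbation of size $O(\eps n)$.

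For each template $T$, I would then count the $2$-wise sum-free subsets lying inside an admissible perturbation $T^+$ of $T$, writing $A=(A\cap T)\sqcup B$ with $B\subseteq T^+\setminus T$. Choosing $A\cap T$ freely contributes $2^{|T|}=2^{4n/5}$. For $B$, the key point is that each $x\in B$ creates many Schur triples with elements of $T$, so after fixing a valid $2$-colouring of $A\cap T$ one must forbid a positive density of otherwise-available elements of $T$; this yields a geometric-type penalty that collapses the contribution of $B$ to a constant factor. Since only $O(1)$ templates arise, the total bound is $O(2^{4n/5})$.

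The main obstacle is this last counting step: the naive bound on the perturbation gives $\sum_{|B|\le \eps n}\binom{n}{|B|}=2^{H(\eps)n}$, which is far too weak, and a true constant-factor bound (rather than a $2^{o(n)}$-factor bound) requires genuine structural input. Extracting it means using Proposition~\ref{prop:stability} not merely to locate $A$ near a template, but to quantify how far $A$ can stray before the existence of a valid $2$-colouring breaks down; this tight coupling between $A\cap T$ and $B$ is what ultimately produces the geometric decay that is needed to trade the perturbation entropy for a genuine $O(1)$.
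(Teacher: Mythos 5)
Your high-level framework --- containers from \cite{HST17}, discard small containers, apply the stability result (Proposition~\ref{prop:stability}) to large ones, then count inside templates plus a perturbation --- is indeed the skeleton of the paper's argument. But there is a genuine and substantial gap exactly where you flag one, and it is not a detail to be ``extracted'': it is the bulk of the proof.

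First, a technical omission: to pass from ``large containers have few Schur triples'' (Lemma~\ref{lem:containers}(iii)) to ``large containers are close to genuine sum-free sets'' to which Proposition~\ref{prop:stability} applies, you need Green's arithmetic removal lemma (Lemma~\ref{lem:Green-removal}); you cannot apply the stability result to the containers directly since they need not be $2$-wise sum-free.

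Second, and more seriously, the ``geometric penalty'' mechanism you sketch --- that each $x\in B$ creates many Schur triples with $T$, forbidding a positive density of $T$ --- is essentially correct and suffices for the \emph{arithmetic} template $(F_{1,4},F_{2,3})$: this is exactly the short calculation in Lemma~\ref{lem:arithmetic-containers}, where a single $t\in A_1\setminus F_{1,4}$ forces $n/20$ forbidden pairs, giving a factor $3^{n/20}/2^{n/10}<1$. But for the \emph{interval} template $(I_1,I_2)$ --- your ``Freiman-type interval analogue'' --- this mechanism simply does not deliver a geometric penalty for an arbitrary perturbation. If $S=A\cap[n/5]$ consists, say, of a short interval near $n/5$, then $S+S$ is tiny and the elements of $S$ do \emph{not} generate many Schur triples inside the template. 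This is why the naive $2^{H(\eps)n}$ entropy bound is hopeless and why your coupling idea does not close the gap on its own. The paper's resolution (Lemma~\ref{lem:continuous-containers}) is to split on $|S|$, $\sum_{a\in S}(n/5-a)$, and $|S+S|/|S|$, and to deploy three genuinely different tools in the three regimes: Janson's inequality when the ``weight'' $k$ is large, the restricted-partition bound when $k$ is small but $|S+S|$ is large, and --- in the hardest regime $|S+S|=O(|S|)$ --- the Green--Morris theorem on the number of sets with small sumset, followed by a four-variable entropy optimisation. None of this machinery (Green--Morris, Janson, restricted partitions) appears in your sketch, and the observation that $S+(A\cap 2X)$ must avoid $A$ in the top interval $\{4n/5+1,\ldots,n\}$ --- the lever that converts ``small $|B|$'' into an actual saving --- is the new structural input you would need and have not identified. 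So the proposal correctly reproduces the easy half of the argument (type (a)) but leaves the hard half (type (b)) as stated intent rather than proof.
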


The proof technique is inspired by the methods of \cite{Cameron-Erdos,Gr05,ABMS14,ABMS,BLST17}. Among other tools we use a container lemma of Hancock et al. \cite{HST17}, an arithmetic removal lemma of Green \cite{Green05}, our stability version of Hu's theorem, and a recent bound on the number of sets of integers with small sumset due to Green and Morris \cite{GM16}.

\subsection{Organisation and notation} 
The rest is organised as follows. Section \ref{sec:structure} is devoted to the study of large sum-free subsets of $[n]$. In Section \ref{subsec:structure-main-lemmas} we provide the main lemmas and use them to obtain Theorem \ref{thm:structure}. We collect together some useful results in Section \ref{subsec:structure-inverse} and prove the main lemmas in Sections \ref{subsec:structure-large}, \ref{subsec:structure-middle} and \ref{subsec:structure-small}. Section \ref{sec:counting} deals with the enumerating problem.  In Section \ref{subsec:counting-overview}, we outline the proof of Theorem \ref{thm:counting}. We present the main tools in Section \ref{subsec:counting-lemmata} and prove Theorem \ref{thm:counting} in Section \ref{subsec:counting-containers}. We close, in Section \ref{sec:remarks}, with some remarks and open problems.

Given two non-empty sets $A, B \subset \bZ$, we define 
\[
A+B:=\{a+b:a\in A, b\in B\} \enskip \text{and} \enskip A-B:=\{a-b:a\in A,b\in B\}
\] 
to be their {\bf sumset} and {\bf difference set}, respectively.
For repeated addition we write $kA$ for the $k$-fold sumset $A+\ldots +A$, in contrast to $k\cdot A:=\{ka:a\in A\}$. For a finite set $A$ of integers, denote by $\min(A)$ and $\max(A)$ the minimum and maximum elements of $A$ respectively, and let $\el(A):=\max(A)-\min(A)+1$. Let $A_{+}$ stands for the set $\{a\in A: a>0\}$. The greatest common divisor of all the elements in $A-A$ will be denoted by $d(A)$.
We denote by $E$ the set of all even and by $O$ the set of all odd numbers in $[n]$; the value of $n$ will always be clear from the context. Denote 
\[
F_{1,4}=\{a\in [n]:a\equiv 1,4 \Mod{5}\} \enskip \text{and} \enskip F_{2,3}=\{a\in [n]:a\equiv 2,3 \Mod{5}\}.
\]
For real numbers $\al$ and $\be$, we employ the interval notation 
\[
[\al,\be]:=\{x \in \bZ:\al\le x\le \be\},
\]
and similarly for open intervals. Throughout the paper we omit floor and ceiling signs where the argument is unaffected.

\section{Large sum-free sets}\label{sec:structure}

\subsection{Main lemmas and a proof of Theorem \ref{thm:structure}}\label{subsec:structure-main-lemmas}
Here we state three main lemmas and explain how to obtain Theorem \ref{thm:structure} from them. In Lemma \ref{lem:large}, we deal with the sum-free sets $A$ for which the ratio $\frac{\min(A)}{\max(A)}$ is large. In Lemma \ref{lem:middle}, we deal with the case when the ratio $\frac{\min(A)}{\max(A)}$ is neither too large nor too small. Lemmas \ref{lem:large} and \ref{lem:middle} follow closely the approach from \cite{DFST99}, and only minor adaptations are needed in our setting. Finally in Lemma \ref{lem:small}, which is much more delicate, we study the case that the ratio $\frac{\min(A)}{\max(A)}$ is small. The methods used in \cite{DFST99} do not seem to adapt easily to this case, so we have been forced to devise our own arguments.

Our first main lemma, proven in Section \ref{subsec:structure-large}, says that if the ratio $\frac{\min(A)}{\max(A)}$ is large then $A$ satisfies condition $(v)$ from Theorem \ref{thm:structure}.

\begin{lemma}[Large range]\label{lem:large}
	Let $1/n \le \eta \le 1/160^2$, and let $A$ be a sum-free subset of $[n]$ such that $n \in A$, $d(A)=1$, $\card{A} \ge (2/5-\eta)n$, and 
	\[
	(1/5-\sqrt{\eta})n\le \min(A)<\card{A}.
	\]
	Then $A$ is contained in $\left[(\frac15-\sqrt{\eta})n,(\frac25+32\sqrt{\eta})n\right] \cup \left[(\frac45-31\sqrt{\eta})n,n\right]$.
\end{lemma}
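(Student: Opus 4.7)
The plan is to write $m := \min A$ and to observe that, since $\min A \ge (\tfrac15 - \sqrt\eta)n$ is part of the hypothesis, the lemma reduces to proving that $A$ is disjoint from the middle window $M := \bigl((\tfrac25 + 32\sqrt\eta)n,\ (\tfrac45 - 31\sqrt\eta)n\bigr)$. Sum-freeness together with $m, n \in A$ yields two baseline disjointness relations: $(A+m) \cap A = \emptyset$ and $A \cap (n-A) = \emptyset$. Counting these in $[2m, n]$ and $[m, n-m]$ respectively gives
\[
\card{A \cap [m, n-m]} + \card{A \cap [2m, n]} \le n - 2m + 1 \quad \text{and} \quad 2\,\card{A \cap [m, n-m]} \le n - 2m + 1.
\]
Substituting $\card{A} \ge (\tfrac25 - \eta)n$ and $m \ge (\tfrac15 - \sqrt\eta)n$, these translate into $\card{A \cap (n-m, n]} \ge (\tfrac1{10} - O(\sqrt\eta))n$ and $\card{A \cap [2m, n-m]} \le (\tfrac15 + O(\sqrt\eta))n$.

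Next I would suppose for contradiction that some $c \in A$ lies in $M$. Sum-freeness with $c \in A$ then gives $(A+c) \cap A = \emptyset$ in $[m+c, n]$ and $(c-A)_+ \cap A = \emptyset$ in $[1, c-m]$. Rewriting $\card{A \cap [m, n-c]} = \card{A} - \card{A \cap (n-c, n]}$ and $\card{A \cap [m+c, n]} = \card{A} - \card{A \cap [m, m+c-1]}$, the first shift rearranges into
\[
\card{A \cap (n-c, n]} + \card{A \cap [m, m+c-1]} \ge 2\card{A} + m + c - n - 1,
\]
while the second gives $2\,\card{A \cap [m, c-m]} + \card{A \cap [c-m+1, c-1]} \le c-m$. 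The lower bound $c > (\tfrac25 + 32\sqrt\eta)n$ together with $m \ge (\tfrac15 - \sqrt\eta)n$ forces $c > (n-m+1)/2$, so the intervals $(n-c, n]$ and $[m, m+c-1]$ overlap on the window $P := [n-c+1, m+c-1]$ with union $[m, n]$. Inclusion-exclusion therefore yields
\[
\card{A \cap P} \ge \card{A} + m + c - n - 1 \ge (31\sqrt\eta - \eta)n - 1 \ge 30\sqrt\eta\,n - 1,
\]
where the last step uses $\eta \le \sqrt\eta/160$, which follows from $\eta \le 1/160^2$.

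To close the argument I would combine this concentration of $A$ in $P$ with the upper bound $\card{A \cap [2m, n-m]} \le (\tfrac15 + O(\sqrt\eta))n$ from the baseline, the constraint $\card{A \cap [m, c-m]} \le (c-m)/2$, and the inverse theorem of Lev--Smeliansky for sets with small difference set (flagged in Subsection~1.3 as the principal tool of~\cite{DFST99}). Applied to the dense restriction $A \cap P$, or to an appropriate translate of $A$, it would force an arithmetic-progression-like structure on $A \cap P$; combined with $d(A)=1$ and the sum-free exclusions at the boundary ($2m \notin A$, $m + c \notin A$, $c - m \notin A$, together with analogues obtained by pairing other elements of $A$), this produces a contradiction. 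A separate sub-case handles the degenerate situation $c > n - m$, in which the primary shift inequality degenerates and forces $c = n - m + 1$; this configuration is then ruled out by a second-order count using $n-m+1, m, n \in A$.

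The main obstacle is precisely this final conversion of concentration into contradiction. The first-order disjointness inequalities only show $\card{A \cap P} \ge 30\sqrt\eta\,n$, whereas the matching upper bound $(\tfrac15 + O(\sqrt\eta))n$ from sum-free considerations alone is a factor $\Theta(1/\sqrt\eta)$ too weak to clash directly. Bridging this gap requires invoking the Lev--Smeliansky inverse theorem carefully on $A \cap P$, and the case split becomes delicate when $c$ approaches the endpoints of $M$, since the sizes of $P$, $[m, c-m]$, and $[2m, n-m]$ then interact in subtle ways.
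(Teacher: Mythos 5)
Your proposal does not close the argument, and the gap you flag at the end is real and is not a small technical obstacle to be patched by ``invoking Lev--Smeliansky carefully'': it is the entire content of the paper's proof. The paper does not try to turn the weak concentration bound $\card{A\cap P}\gtrsim\sqrt\eta\,n$ into a contradiction directly. Instead it proves two qualitatively stronger facts first. (1) Claim \ref{claim:large}: $m\le(1/5+15\eta)n$, i.e.\ $m$ is pinned to within $O(\eta n)$ of $n/5$ from \emph{both} sides. Your argument never obtains an upper bound on $m$; you only carry the hypothesis $m\ge(1/5-\sqrt\eta)n$, which is what makes your inequalities too lossy. (2) Claim \ref{claim:large-approximate}: $A$ contains \emph{all but at most $14\sqrt\eta\,n$ elements} of $\left[(\tfrac15+\sqrt\eta)n,(\tfrac25-\sqrt\eta)n\right]\cup\left[(\tfrac45+\sqrt\eta)n,n\right]$. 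This near-saturation statement is established by combining Lemma \ref{lem:Lev-Smeliansky} with the observation that $A\cap[n-m]$ and $(A-A)_+$ are disjoint subsets of $[n-m]$, yielding $\card{A\cap(n-m,n]}\ge m-10\eta n$ (that is, $A$ contains essentially every integer in $(n-m,n]$, not merely a $\tfrac1{10}n$-sized chunk as in your baseline estimate), followed by several more passes of Lemma \ref{lem:long-interval} and \eqref{Cauchy-Davenport} to pin down $A\cap[m,\tfrac12(n-m)]$ as well. Once $A$ is known to nearly fill those two intervals, the final contradiction is a one-line pigeonhole: an element $a\in A$ in the middle window shifts the left interval onto the right one with $\ge 29\sqrt\eta\,n$ overlap, which exceeds twice the exceptional-set budget $14\sqrt\eta\,n$, so some $b,c\in A$ satisfy $a+b=c$.

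The place where your plan would fail if pushed is precisely the point you identify. Your disjointness inequalities yield $\card{A\cap P}\ge 30\sqrt\eta\,n$, but $P$ has length roughly $2c+m-n$, which ranges from about $63\sqrt\eta\,n$ (when $c$ sits at the left edge of $M$) up to about $\tfrac45 n$ (when $c$ sits at the right edge), so the density of $A$ in $P$ is uncontrolled and can be arbitrarily small; no first-order sum-free exclusion on $P$ alone rules this out. Applying Lev--Smeliansky ``to the dense restriction $A\cap P$'' gets you nothing when that restriction is not dense. The correct use of Lev--Smeliansky, as in the paper, is global: it lower-bounds $\card{A}+\card{(A-A)_+}$ for the whole set $A$, and this combines with the disjointness $A\cap[n-m]$, $(A-A)_+\subseteq[n-m]$ to force $A$ to live almost entirely in $(n-m,n]\cup[m,\tfrac12(n-m)]$. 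You would need to reproduce Claims \ref{claim:large} and \ref{claim:large-approximate} (or an equivalent) to have any hope of finishing; as written, your proposal stops well short of the lemma.
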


Our second main lemma rules out the possibility that the ratio $\frac{\min(A)}{\max(A)}$ is neither too large nor too small. We provide the proof in Section \ref{subsec:structure-middle}.

\begin{lemma}[]\label{lem:middle}
	Let $1/n \le \eta \le 1/175^2$, and let $A$ be a sum-free subset of $[n]$ such that $n \in A$, $d(A)=1$, and 
	\[
	35\sqrt{\eta}n\le \min(A) \le (1/5-\sqrt{\eta})n.
	\]
	Then $\card{A} \le (2/5-2\eta)n$.
\end{lemma}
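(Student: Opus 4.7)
The plan is to argue by contradiction. Suppose $\card{A} > (2/5 - 2\eta)n$; the strategy is to combine sum-freeness with the Lev--Smeliansky inverse theorem for difference sets (recalled in Section~\ref{subsec:structure-inverse}) in order to contradict $\min(A) \le (1/5 - \sqrt{\eta})n$.

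The first step exploits sum-freeness to bound $\card{A - A}$. If $c = a - a'$ with $a > a'$ and $c, a', a \in A$, then $a' + c = a$ would be a Schur triple in $A$; hence $(A - A)_{+} \cap A = \emptyset$. Since $(A - A)_{+} \subset [1, n - \min(A)]$ and $A \subset [\min(A), n]$ are disjoint subsets of $[1, n]$, this gives $\card{A} + \card{(A - A)_{+}} \le n$, so
\[
\card{A - A} \;=\; 1 + 2\card{(A - A)_{+}} \;\le\; 2(n - \card{A}) + 1 \;<\; (6/5 + 4\eta)n + 1.
\]

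The next step verifies the hypothesis of the Lev--Smeliansky theorem and applies it. From $\card{A} > (2/5 - 2\eta)n$ we have $4\card{A} > (8/5 - 8\eta)n$, which comfortably exceeds the bound above once $n \ge 1/\eta \ge 175^{2}$; so the theorem applies and, combined with $d(A) = 1$, forces $A$ to be contained in an interval of length
\[
\el(A) \;\le\; \card{A - A} - \card{A} + 1 \;\le\; 2n - 3\card{A} + 2 \;<\; (4/5 + 6\eta)n + 2.
\]
Since $\max(A) = n$, this yields $\min(A) = n - \el(A) + 1 > (1/5 - 6\eta)n - 1$. Now $\eta \le 1/175^{2}$ gives $6\sqrt{\eta} \le 6/175$, and $\eta \ge 1/n$ gives $\sqrt{\eta}\,n \ge \sqrt{n} \ge 175$, so $\sqrt{\eta}\,n - 6\eta n \ge (169/175)\sqrt{\eta}\,n > 1$. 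Hence $(1/5 - 6\eta)n - 1 > (1/5 - \sqrt{\eta})n$, contradicting the hypothesis $\min(A) \le (1/5 - \sqrt{\eta})n$.

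The main technical concern is controlling the quantitative slack: the bound $\card{A - A} \le 2(n - \card{A}) + 1$ is only barely below $3\card{A}$ in our regime, so the Lev--Smeliansky theorem has to be invoked in a form that handles difference sets essentially up to $4\card{A}$ (this is exactly the improvement of Lev--Smeliansky over the classical Freiman $3k-4$ theorem). The lower bound $\min(A) \ge 35\sqrt{\eta}n$ does not appear explicitly in the sketch above; it is most naturally understood as keeping Lemmas~\ref{lem:large}, \ref{lem:middle}, and the subsequent Lemma~\ref{lem:small} on disjoint ranges of $\min(A)$, and it may also be needed to avoid degenerate configurations (for instance when $A-A$ comes close to filling a long interval anchored at $0$) in the precise version of the inverse theorem stated in Section~\ref{subsec:structure-inverse}.
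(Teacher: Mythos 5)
Your strategy is to bound $\card{A-A}$ from above using sum-freeness, invoke the Lev--Smeliansky inverse theorem to conclude that $A$ lies in a single short arithmetic progression, and then deduce that $\min(A)$ must be large. The crucial step is unjustified: Lemma~\ref{thm:Lev-Smeliansky} applied with $B=-A$ requires $\card{A-A}\le 3\card{A}-4$, not $\card{A-A}<4\card{A}$ as your comparison with $4\card{A}$ suggests. Under your contradiction hypothesis $\card{A}>(2/5-2\eta)n$, your bound gives $\card{A-A}<(6/5+4\eta)n+1$, while $3\card{A}-4$ is only guaranteed to exceed roughly $(6/5-6\eta)n$; the former is larger than the latter, so the hypothesis of the inverse theorem does not follow. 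This is not a removable technicality: in this density regime a sum-free set can a priori have $\card{A-A}$ just above $3\card{A}$ while failing to lie in any short progression --- the model obstruction is a union of two separated intervals such as $\left(\tfrac15 n,\tfrac25 n\right]\cup\left(\tfrac45 n,n\right]$, for which $\card{A-A}>3\card{A}-4$. Ruling out precisely such two-interval configurations once $\min(A)$ dips below $(1/5-\sqrt{\eta})n$ is the content of Lemma~\ref{lem:middle}, so assuming that $A$ sits in one short interval essentially assumes what must be proved. There is no form of Lev--Smeliansky with a $4\card{A}$ threshold for $A-A$; Jin's theorem (Lemma~\ref{thm:Jin}) does reach the regime near $3\card{A}$, but its conclusion explicitly allows the two-progression alternative, which would not yield the contradiction you want.

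The paper's proof takes a genuinely different route. It lower-bounds $\card{(A-A)_+}$ directly, without first trying to show that $A$ is an interval. Writing $m=\min(A)$, it decomposes $A\cap[n-2m]$ into blocks $A_1,\dots,A_k$ with internal gaps less than $m$, uses Claim~\ref{claim:middle} and Lemma~\ref{lem:summation} to show that each $D-A_i$ is large (where $D=A\cap[n-m+1,n]$), observes that $(A-A)_+\cap[m-1]$ and the sets $D-A_i$ are pairwise disjoint subsets of $(A-A)_+$, and then sums these contributions against the trivial upper bound $\card{(A-A)_+}\le n-\card{A}$. A symptom of the divergence between the two approaches is that your sketch never uses the hypothesis $\min(A)\ge 35\sqrt{\eta}n$: this is not only there to keep the ranges of the three main lemmas disjoint, it is exactly what makes the error parameter $\eps=(5\eta n+2)/m$ small enough for Lemma~\ref{lem:summation} to give a useful bound in the paper's argument.
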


Our third and final main lemma, proven in Section \ref{subsec:structure-small}, states that if $\min(A)$ is small compared to $\max(A)$ then $A$ satisfies condition (i), (ii) or (iii) from Theorem \ref{thm:structure}.

\begin{lemma}[Small range]\label{lem:small}
	There exists an absolute positive constant $c$ such that the following holds for every $n\in\bN$ and every $\eta \in\bR$ with $1/n \le \eta\le c$. Let $A$ be a sum-free subset of $[n]$ satisfying $A\cap E \ne \emptyset$, $\card{A} \ge (2/5-\eta)n$, and
	\[
	\min(A) \le 35\sqrt{\eta}n.
	\]
	Then $A$ is contained in either $F_{1,4}$ or $F_{2,3}$.
\end{lemma}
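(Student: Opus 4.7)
Let $a:=\min(A)\le 35\sqrt{\eta}n$. My goal is to force $A$ into a residue structure modulo $5$ and identify it as $F_{1,4}$ or $F_{2,3}$. Write $A_i:=A\cap(5\bZ+i)$ for $i\in\{0,1,2,3,4\}$; then $\sum_i\card{A_i}\ge(2/5-\eta)n$ while each $\card{A_i}\le n/5+1$, so at least two residue classes must carry $\Omega(n)$ mass.

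The key input will be a doubling bound on $A$. Since $a\in A$, sum-freeness gives $A\cap(A+a)=\emptyset$, hence $2\card{A}\le n+a+1$. More generally $(A+A)\cap A=\emptyset$ inside $[1,n]$, which yields $\card{(A+A)\cap[1,n]}\le n-\card{A}\le(3/5+\eta)n$ and therefore $\card{A+A}\le(8/5+\eta)n$. Consequently $A$ has doubling at most $4+O(\eta)$, and I would invoke a Freiman- or Pl\"unnecke-type inverse result (in the spirit of the Lev--Smeliansky theorem cited in the paper) to place $A$ inside a short union of arithmetic progressions; the density bound $\card{A}\ge(2/5-\eta)n$ combined with sum-freeness pins down the common difference as $5$.

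With the mod-$5$ skeleton in place, I would identify the two heavy residues. For any $i,j,k\in\bZ/5\bZ$ with $i+j\equiv k\Mod{5}$, the sumset $A_i+A_j$ contains an AP of length $\card{A_i}+\card{A_j}-1$; intersected with $[1,n]\cap(5\bZ+k)$ it nearly fills residue class $k$, so if all three of $\card{A_i},\card{A_j},\card{A_k}$ were $\Omega(n)$ a Schur triple would appear in $A$. Hence the heavy residues form a sum-free subset of $\bZ/5\bZ$ of size two, namely $\{1,4\}$ or $\{2,3\}$. The small-minimum hypothesis then forces $a$ itself into a heavy residue: otherwise, for instance if $a\equiv 0\Mod{5}$ with heavy pair $\{1,4\}$, the shifted set $a+A_1\subset 5\bZ+1$ would overlap $A_1$ on a set of size at least $2\card{A_1}-(n/5+1)=\Omega(n)$, producing Schur triples. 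Analogous arguments exclude $a\equiv 2,3\Mod{5}$ via $a+A_4$.

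Finally, I would eliminate any light-residue elements: if the heavy pair is $\{1,4\}$ and some $x\in A$ has residue $r\in\{0,2,3\}$, then the density of $A_1\cup A_4$ forces $(x+(A_1\cup A_4))\cap(A_1\cup A_4)\ne\emptyset$, contradicting $A\cap(A+x)=\emptyset$. The hypothesis $A\cap E\ne\emptyset$ is precisely what prevents the escape route $A\subset O$, in which $A$ is trivially sum-free (odd$+$odd is even) and the mod-$5$ deductions collapse: with an even $b\in A$, the extra relation $A\cap(A+b)=\emptyset$ forces $A\cap O$ to be sparse enough that the genuine mod-$5$ rigidity kicks in. The main obstacle will be the passage from the mild doubling bound $\card{A+A}\le(8/5+O(\eta))n$ to the two-residue mod-$5$ conclusion; absorbing the $O(\sqrt{\eta}n)$ slack into the Schur-triple-production arguments, particularly near the endpoints of $[1,n]$, and leveraging the small-minimum bound $a\le 35\sqrt{\eta}n$ to suppress boundary losses, is the heart of the argument.
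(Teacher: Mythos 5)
Your overall plan (bound the doubling, invoke an inverse theorem, read off the mod-$5$ structure) is the right flavor, but there is a fundamental gap at the inverse-theorem step that the paper's proof is specifically engineered to avoid.

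You compute $\card{A+A}\le 2n-\card{A}\le(8/5+\eta)n$, giving a doubling constant around $4$. You then say you would ``invoke a Freiman- or Pl\"unnecke-type inverse result (in the spirit of the Lev--Smeliansky theorem)'' to cover $A$ by a short union of arithmetic progressions with a specific step. No such result exists at doubling $\approx 4$. Lev--Smeliansky (the paper's Lemma~2.7) requires $\card{A+A}\le 3\card{A}-4$, i.e.\ doubling strictly below $3$, and Jin's theorem (Lemma~2.8) only covers $\card{A+A}=3\card{A}-3+r$ with $r\le c\card{A}$, i.e.\ doubling $3+o(1)$. At doubling $4$ the only available tools are Freiman--Ruzsa / Pl\"unnecke, which yield containment in a generalised arithmetic progression of bounded rank and size $\le C(K)\card{A}$ with $C(K)$ large and uncontrolled; they do not place $A$ in two APs with a common step, let alone pin that step as $5$. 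So your passage from the doubling bound to the mod-$5$ skeleton does not go through.

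The paper closes exactly this gap with a trick you are missing: instead of working with all of $A$, it sets $A_0=A\cap[n/2]$. Then $2A_0\subseteq[n]$ in its entirety (not just ``inside $[1,n]$''), so $2A_0$ and $A$ are genuinely disjoint subsets of $[n]$, giving $\card{2A_0}\le n-\card{A}\le(3/5+\eta)n$. Combined with the lower bound $\card{A_0}\ge(1/5-\kappa(\eta))n$ (Claim~2.11, which itself uses the hypotheses $\min(A)\le 35\sqrt{\eta}n$ and $A\cap E\ne\emptyset$ in an essential way), this yields doubling $\approx 3$ for $A_0$, which is exactly where Lev--Smeliansky and Jin apply. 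Two further points your sketch glosses over: (a) establishing that roughly half of $A$ lies in $[n/2]$ is itself nontrivial and occupies an entire claim; and (b) your final ``eliminate light-residue elements'' step only produces a Schur triple when the offending element $x$ is small enough that $x+A_i$ stays largely inside $[n]$ — for $x$ near $n$ the overlap argument fails, and the paper has to invoke the bootstrap lemma and a separate pigeonhole argument on residues to rule this out. Your invocation of $A\cap E\ne\emptyset$ is likewise too vague to support the structure you claim it provides.
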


With these lemmas in hand, we can prove Theorem \ref{thm:structure}.

\begin{proof}[Proof of Theorem \ref{thm:structure}]
Set $c=\min\left\{c_{\ref{lem:small}},\frac{1}{175^2}\right\}$, where $c_{\ref{lem:small}}$ is the absolute positive constant from Lemma \ref{lem:small}. Denote by $m$ and $N$ the minimum and maximum elements of $A$ respectively. We may assume without restriction of generality that $A\cap E\ne \emptyset$ and $\min(A)<\card{A}$, that is, $A$ does not satisfy properties (i) and (iv). In order to apply the main lemmas, we must show that $d(A)=1$ and $\eta\ge 1/N$. Suppose to the contrary that $d(A)>1$. Then there are two possibilities: either $d(A)=2$ or $d(A)\ge 3$. In the later case, we clearly have $\card{A} \le n/3+1$. In the former case, since $A\cap E \ne \emptyset$, $A$ consists of even numbers. In particular, the set $\{a/2:a \in A\}$ is a sum-free subset of $[n/2]$, and so $\card{A}\le n/4+1$. In either case, we always have $\card{A}\le n/3+1$, which contradicts the assumptions that $\card{A}\ge (2/5-\eta)n$ and $1/n\le \eta \le 1/175^2$. To verify the inequality $\eta \ge 1/N$, we note that $\card{A} \le N/2+1$ as $A$ is a sum-free subset of $[N]$. Since $\card{A} \ge (2/5-\eta)n$, this implies $N\ge n/2$ when $2/n \le \eta \le 1/175^2$, giving the required bound $\eta \ge 2/n \ge 1/N$. 

The proof now falls naturally into three cases:
\begin{equation*}
	(a) \hspace{0.2cm} (1/5-\sqrt{\eta})N \le m < \card{A} \quad 
	(b) \hspace{0.2cm} 35\sqrt{\eta}N \le m \le (1/5-\sqrt{\eta})N \quad (c) \hspace{0.2cm} m \le 35\sqrt{\eta}N.
\end{equation*}
We can easily rule out case $(b)$ using Lemma \ref{lem:middle}. If case $(c)$ occurs then Lemma \ref{lem:small} would imply that $A$ is a subset of either $F_{1,4}$ or $F_{2,3}$.
Finally we deal with case $(a)$. We may apply Lemma \ref{lem:large} to conclude  ${A\subseteq\left[(\frac15-\sqrt{\eta})N,(\frac25+32\sqrt{\eta})N\right]\cup \left[(\frac45-31\sqrt{\eta})N,N\right]}$. In particular, we have $\card{A} \le (2/5+65\sqrt{\eta})N$. This upper bound on $\card{A}$, in conjunction with the assumption that $\card{A} \ge (2/5-\eta)n$, shows $N \ge (1-163\sqrt{\eta})n$, which in turn implies $A\subseteq [(\frac15-200\sqrt{\eta})n,(\frac25+200\sqrt{\eta})n] \cup [(\frac45-200\sqrt{\eta})n,n]$ when $\eta \le 1/175^2$.
\end{proof}

\subsection{Inverse theorems}\label{subsec:structure-inverse}
Here we collect together a number of {\bf inverse theorems} that are essential for proving the main lemmas.

Sets with small sumset are a central object of interest in Arithmetic Combinatorics and have been extensively studied in recent years (see, for example, \cite{TaoVu06}). One of the main results in this area is Freiman's inverse theorem \cite{Freiman59} which states that if $A\subset\bZ$ and $\card{A+A} \le K\card{A}$ for some fixed $K$, then $A$ is a dense subset of a generalised arithmetic progression of bounded rank. In fact, the statement still holds in a slightly more general situation, when one considers $A+B$ instead of $A+A$. This was shown by Ruzsa \cite{Ruzsa94}. 
 
For relatively small $K$, one can obtain more precise information, which plays a crucial role in our study. It is not hard to see that for any finite and non-empty sets $A, B\subset \bZ$, one has 
\begin{equation}\label{Cauchy-Davenport}
	\card{A+B}\ge \card{A}+\card{B}-1,
\end{equation}
with equality if and only if $A$ and $B$ are arithmetic progressions with the same step. There has been much  on generalising this result. For instance, Lev and Smeliansky \cite{Lev-Smeliansky95} proved the following theorem.

\begin{lemma}[Lev--Smeliansky]\label{thm:Lev-Smeliansky}
	Let $A$ and $B$ be two finite sets of integers such that $\card{A+B}\le \card{A}+\card{B}+\min(\card{A},\card{B})-4$. Then $A$ is contained in an arithmetic progression of length $\card{A+B}-\card{B}+1$ and $B$ is contained in an arithmetic progression of length $\card{A+B}-\card{A}+1$, where both progressions have the same step.
\end{lemma}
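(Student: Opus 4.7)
The plan is to reduce Lev--Smeliansky to an inductive Freiman-style analysis of sets of integers with small sumset.

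\textbf{Set-up and normalisation.} First, I translate so that $\min(A)=\min(B)=0$, then divide through by $d:=\gcd\{x-y:x,y\in A\cup B\}$; these affine operations preserve $\card{A}$, $\card{B}$, and $\card{A+B}$, and any AP of step $1$ in the normalised sets pulls back to an AP of step $d$ in the originals, which is exactly the common step required by the conclusion. Assume without loss of generality $k:=\card{A}\le\ell:=\card{B}$, and write $a:=\max A$, $b:=\max B$. The hypothesis becomes $\card{A+B}\le 2k+\ell-4$, and the desired containments reduce to the two inequalities $a+\ell\le\card{A+B}$ and $b+k\le\card{A+B}$ (so that $A\subseteq[0,a]\subseteq[0,\card{A+B}-\ell]$ and $B\subseteq[0,b]\subseteq[0,\card{A+B}-k]$).

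\textbf{Inductive argument.} Proceed by induction on $k+\ell$. The base case is \eqref{Cauchy-Davenport} together with its equality characterisation (equality iff $A$ and $B$ are APs of the same step), which gives the result whenever $\card{A+B}=k+\ell-1$. For the induction step I would remove the extremal element $a$ from $A$ to obtain $A':=A\setminus\{a\}$ with a reduced sumset $A'+B$. The loss $L:=\card{A+B}-\card{A'+B}$ equals the number of $b_j\in B$ with $a+b_j\notin A'+B$. If $L$ is large, then $\card{A'+B}\le 2(k-1)+\ell-4$ holds and induction places $A'$ in a short AP of step $1$; the gcd normalisation, together with the fact that the $L$ new elements are all confined to $(\max A'+b, a+b]$, then forces $a$ to sit at the end of an extended AP of the claimed length. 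If $L$ is small, almost every $a+b_j$ has a representation $a+b_j=a_i+b_{j'}$ with $i<k$; exploiting these coincidences $a-a_i=b_{j'}-b_j$ and the gcd condition generates long arithmetic progressions inside both $A$ and $B$ directly.

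\textbf{Main obstacle.} The delicate regime is when $\card{A+B}$ nearly saturates $2k+\ell-4$, so that every single-element removal from $A$ or $B$ pushes the reduced sumset outside the inductive range. To handle this I would run the argument in parallel with the reflections $A\mapsto a-A$ and $B\mapsto b-B$ (which leave $\card{A+B}$ invariant) and intersect the resulting one-sided conclusions. A careful case analysis comparing the four ``shadow'' sumsets $(A\setminus\{0\})+B$, $(A\setminus\{a\})+B$, $A+(B\setminus\{0\})$, $A+(B\setminus\{b\})$ would then pin down the AP structure. The gcd normalisation is essential in this step, since it rules out spurious AP structure of larger step that could otherwise slip through the inductive bookkeeping and produce containments in longer progressions of a non-unit common difference.
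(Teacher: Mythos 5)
The paper does not prove Lemma~\ref{thm:Lev-Smeliansky}; it quotes it as an external result from Lev and Smeliansky (Acta Arith.\ 70 (1995), 85--91), so there is no in-paper proof to compare against. What you have written is therefore an original attempt, and my assessment is that it is a sketch rather than a proof: each of its three stages has a concrete gap that is not patched.

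First, the induction step in the ``$L$ large'' case (you remove $a=\max A$ and apply the inductive hypothesis to $A'=A\setminus\{a\}$) does not deliver what you need. After the removal, $\gcd\{x-y:x,y\in A'\cup B\}$ may be strictly larger than $1$ even though the gcd for $A\cup B$ is $1$; the inductive conclusion then places $A'$ and $B$ in arithmetic progressions of some common step $d'>1$ and of the stated lengths, and you need to deduce that putting $a$ back produces an interval of length at most $\card{A+B}-\card{B}$. Your remark that the $L$ lost elements ``are all confined to $(\max A'+b,\,a+b]$'' is true, but it controls a piece of the sumset, not the position of $a$ relative to $A'$: nothing in the inductive conclusion prevents $a$ from lying far outside the progression containing $A'$, and nothing forces $d'=1$. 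Also note that ``$L$ large'' here has to mean $L\ge2$ for $\card{A'+B}\le 2(k-1)+\ell-4$; the case $L=1$ (which will be the generic one near saturation) is not covered.

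Second, the ``$L$ small'' case is pure optimism. Having many coincidences $a-a_i=b_{j'}-b_j$ does not by itself generate AP structure; for instance, when $A$ is a union of two arithmetic progressions with the same step (which Freiman's $3k-4$ theorem explicitly allows at $r\ge\card{A}/3$, cf.\ Lemma~\ref{thm:Jin} and Conjecture~\ref{conj:small-difference} in the paper) one gets abundant such coincidences without $A$ being a single progression. You would need a concrete mechanism for converting coincidence statistics into the inequality $\max A\le\card{A+B}-\card{B}$, and none is offered.

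Third, you flag the ``main obstacle'' yourself: when $\card{A+B}$ is near $2k+\ell-4$, no single-element removal keeps you inside the inductive regime. This is precisely where the theorem has content, and the paragraph you devote to it — ``run the argument in parallel with reflections\ldots\ a careful case analysis comparing the four shadow sumsets would then pin down the AP structure'' — is a declaration of intent, not an argument. As far as I can tell no such elementary case analysis closes this case; the known proofs (Freiman's $3k-4$ for $A=B$, and Lev--Smeliansky's extension to distinct $A$, $B$) do not proceed by peeling off extremal elements. Lev and Smeliansky's argument reduces modulo $\max A$ (after the same normalisation you perform), proves a ``lifting'' inequality relating $\card{A+B}$ to the size of the sumset of the reductions in $\mathbb{Z}/(\max A)\mathbb{Z}$, and then invokes Kneser's theorem in the cyclic group. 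That reduction-to-a-cyclic-group step is the genuinely new idea that your sketch is missing, and without it the induction does not close.
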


The special case of the above result for $A=B$ is the famous Freiman's $3k-4$ theorem \cite{Freiman59}. For our investigation we shall, however, need a ``difference version'' of this theorem, which follows readily from Lemma \ref{thm:Lev-Smeliansky}.

\begin{lemma}\label{lem:Lev-Smeliansky}
	Let $A$ be a finite set in $\bZ$ such that $d(A)=1$. Then
	\[
	\card{(A-A)_{+}} \ge \min\left\{\tfrac12(\card{A}+\el(A)-2), \tfrac32\card{A}-2\right\}.
	\]   
\end{lemma}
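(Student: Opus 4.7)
My plan is to reduce the statement to Lemma \ref{thm:Lev-Smeliansky} applied to the pair $(A,-A)$. First I would record two simple facts: $\card{-A}=\card{A}$, and $A-A$ is symmetric about $0$ and contains $0$, so
\[
\card{A-A} \;=\; 2\card{(A-A)_{+}}+1.
\]
With $B=-A$ the hypothesis threshold in Lemma \ref{thm:Lev-Smeliansky} becomes $\card{A}+\card{B}+\min(\card{A},\card{B})-4=3\card{A}-4$, so it is natural to split on whether $\card{A-A}$ lies above or below this threshold.

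In the ``large difference set'' branch, where $\card{A-A}\ge 3\card{A}-3$, the identity above immediately yields
\[
\card{(A-A)_{+}} \;=\; \tfrac{1}{2}\bigl(\card{A-A}-1\bigr) \;\ge\; \tfrac{3}{2}\card{A}-2,
\]
which is one of the two quantities appearing in the minimum.

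In the ``small difference set'' branch, where $\card{A-A}\le 3\card{A}-4$, Lemma \ref{thm:Lev-Smeliansky} provides the structural conclusion that $A$ is contained in an arithmetic progression of length $\card{A-A}-\card{A}+1$ whose common difference, call it $d$, is shared with a progression containing $-A$. Every element of $A-A$ is then a multiple of $d$, so $d\mid d(A)=1$ and hence $d=1$. A step-$1$ progression containing $A$ must have length at least $\el(A)=\max(A)-\min(A)+1$, which forces $\card{A-A}\ge \card{A}+\el(A)-1$ and therefore
\[
\card{(A-A)_{+}} \;\ge\; \tfrac{1}{2}\bigl(\card{A}+\el(A)-2\bigr),
\]
the other quantity in the minimum. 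Combining the two branches gives the claimed bound.

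The only subtle point I anticipate is the parity bookkeeping: one must observe that $\card{A-A}$ is odd (from the symmetry about $0$), so that $\tfrac{1}{2}(\card{A-A}-1)$ is a genuine integer and the two case thresholds $3\card{A}-4$ and $3\card{A}-3$ line up cleanly with the stated bound $\tfrac{3}{2}\card{A}-2$. Beyond this, the argument is a direct unpacking of Lemma \ref{thm:Lev-Smeliansky}, with the hypothesis $d(A)=1$ invoked solely to pin down the step of the ambient progression.
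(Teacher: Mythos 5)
Your proof is correct and is essentially the paper's argument presented directly rather than by contradiction: both split on whether $\card{A-A}$ is above or below the threshold $3\card{A}-4$, use the exact relation $\card{A-A}=2\card{(A-A)_+}+1$ to handle the large branch, and in the small branch invoke Lemma \ref{thm:Lev-Smeliansky} with $B=-A$ together with $d(A)=1$ to force the containing progression to have step $1$ and hence length at least $\el(A)$. The parity observation you flag is sound and the bookkeeping matches the paper's.
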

\begin{proof}
Suppose for a contradiction that $\card{(A-A)_{+}}<\min\left\{\tfrac12(\card{A}+\el(A)-2), \tfrac32\card{A}-2\right\}$. As $\card{(A-A)_{+}} \ge \tfrac12\card{A-A}-\tfrac12$, it follows that $\card{A-A} \le \min\{\card{A}+\el(A)-2,3|A|-4\}$. By Lemma \ref{thm:Lev-Smeliansky}, we learn that $A$ is contained in an arithmetic progression of length $\card{A-A}-\card{A}+1 \le \el(A)-1$. This implies $d(A)>1$, a contradiction. 
\end{proof}

To our knowledge, the only extension of the $3k-4$ Theorem that applies to any set $A\subset \bZ$ with $\card{A+A}=3\card{A}+o(\card{A})$ was accomplished by Jin \cite{Jin07}. His proof is a tour de force of non-standard analysis.

\begin{lemma}[Jin]\label{thm:Jin}
	There exist an absolute positive constant $c$ and a natural number $K$ such that for every finite set $A$ of integers with $\card{A}>K$ and $\card{A+A}=3\card{A}-3+r$ for some integer $r$ with $0\le r\le c\card{A}$, $A$ satisfies at least one of the following properties:
	\begin{itemize}
		\item[(i)] $A$ is a subset of an arithmetic progression of length $2\card{A}-1+2r$;
		\item[(ii)] $A \subseteq P_1\cup P_2$ for some arithmetic progressions $P_1,P_2$ with common step and $\card{P_1}+\card{P_2} \le \card{A}+r$.   
	\end{itemize}
\end{lemma}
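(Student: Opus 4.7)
The plan is to reduce the problem to a two-dimensional structure via a quantitative inverse theorem and then analyse the sumset combinatorially. First normalize by translation and dilation so that $\min(A)=0$ and $d(A)=1$; set $N:=\max(A)=\el(A)-1$. Both operations are Freiman isomorphisms of order two and preserve the hypothesis $\card{A+A}=3\card{A}-3+r$. If $N\le 2\card{A}-2+2r$, the embedding $A\subseteq[0,N]$ already gives conclusion~(i), so we may assume $N\ge 2\card{A}-1+2r$, i.e.\ $A$ is genuinely spread out.

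Since the doubling ratio $\card{A+A}/\card{A}$ is bounded by $3+c$, a quantitative Freiman-type theorem (for example a sharp version due to Chang or Schoen) embeds $A$ in a generalized arithmetic progression of dimension at most two, $A\subseteq Q:=\{\alpha i+\beta j:0\le i<L_1,\ 0\le j<L_2\}$, with $L_1L_2\le C\card{A}$ for an absolute constant $C$ depending only on $c$. If $L_2=1$ then $A$ already lies in an arithmetic progression; applying Lemma~\ref{thm:Lev-Smeliansky} together with the exact identity $\card{A+A}=3\card{A}-3+r$ pins the enclosing length down to at most $2\card{A}-1+2r$, giving conclusion~(i).

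The nontrivial case is $L_2\ge 2$. Define the rows $R_j:=\{i:\alpha i+\beta j\in A\}$, so that the sumset $A+A$ decomposes into slices indexed by $j+j'$. The plan is to show (a) that at most two consecutive rows of $Q$ can be nonempty, and (b) that each such row is essentially an arithmetic progression of common step $\alpha$, with combined length $\card{P_1}+\card{P_2}\le\card{A}+r$. A useful lever in this analysis is the ``boundary-shadow'' identity: since $0,N\in A$, the translates $A$ and $A+N$ lie inside $A+A$ and meet only in $\{N\}$, so they account for exactly $2\card{A}-1$ elements of $A+A$, leaving only $\card{A}-2+r$ slots for the interior cross-sums $R_j+R_{j'}$ with $j\ne j'$. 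An inequality of Plünnecke--Ruzsa type on each row, combined with this interior budget, should force the row structure claimed in~(ii).

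The main obstacle is the last step: ruling out that $A$ intersects three or more rows of $Q$, and sharpening the combined length bound from the easy $(1+O(c))\card{A}$ down to the tight $\card{A}+r$. Naive cross-sum counting yields the weaker estimate but gives no contradiction when $r$ is close to $c\card{A}$; one has to exploit the precise boundary contributions $2\min(A)$, $2\max(A)$ and the limited interior budget of $\card{A}-2+r$ elements to squeeze out the extra slack. This is exactly where Jin's original argument invokes a non-standard-analysis ultra-limit to `pass to the continuous limit'; a direct combinatorial substitute would require a careful case analysis comparing the sizes of the two rows against $r$, together with a refined application of Lemma~\ref{thm:Lev-Smeliansky} to each row individually.
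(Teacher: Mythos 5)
The paper does not prove this lemma; it cites it verbatim from Jin~\cite{Jin07} and remarks that Jin's argument is ``a tour de force of non-standard analysis.'' So there is no in-paper proof to compare against, and what you have written must stand on its own. It does not: the last paragraph of your proposal is an explicit acknowledgement that the decisive step has not been carried out. You write that the budget bound ``should force the row structure claimed in~(ii)'' and that a combinatorial substitute for Jin's ultra-limit argument ``would require a careful case analysis.'' That case analysis \emph{is} the theorem. The normalization, the reduction to $N\ge 2\card{A}-1+2r$, and the boundary-shadow observation that $A$ and $A+N$ sit inside $A+A$ and overlap only at $N$ are all routine; they yield the interior budget $\card{A+A}-(2\card{A}-1)=\card{A}-2+r$ but give no mechanism whatsoever for turning that into the sharp dichotomy between the interval bound $2\card{A}-1+2r$ and the two-progression bound $\card{P_1}+\card{P_2}\le\card{A}+r$, nor for excluding three or more populated rows.

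There is also an unsupported step earlier. You invoke ``a quantitative Freiman-type theorem (Chang or Schoen)'' to place $A$ inside a generalized arithmetic progression $Q$ of rank at most~$2$ with $\card{Q}\le C\card{A}$. Generic quantitative Freiman theorems bound the rank by a function of the doubling constant that is not sharp enough to force rank exactly~$2$ for doubling just above~$3$, and the results that do give such a clean rank-$2$ conclusion (e.g.\ for sets of small doubling near the $3k-4$ threshold) are of comparable depth to the theorem you are trying to prove and would need to be quoted precisely, with the size bound $\card{Q}\le C\card{A}$ verified. As it stands this step is asserted, not established. In short: the outline is a reasonable guess at the shape of an elementary proof, but both the rank-$2$ containment and, more seriously, the sharp two-progression conclusion are left as gaps, and you say as much yourself.
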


\subsection{Large range}\label{subsec:structure-large}
Here we give the proof of Lemma \ref{lem:large}. We shall need a simple but crucial observation from \cite[Proposition 2.1]{DFST99}. Its proof can be found in the appendix.

\begin{lemma}\label{lem:long-interval}
	Let $A$ be a sum-free set of positive integers and let $m$ be an arbitrary element of $A$. Then $A$ satisfies the following conditions:
	\begin{itemize}
		\item[(i)] $\card{A\cap \left([u,v]\cup [u+m,v+m]\right)} \le v-u+1$ for all $u, v \in \bN$ with $u\le v$;
		\item[(ii)] $\card{A\cap [u,u+2m-1]} \le m$ for every  $u \in \bN$;
		\item[(iii)] $\card{A \cap [u,v]} \le \tfrac12(v-u+m+1)$ for all $u,v \in \bN$ with $u\le v$.
	\end{itemize} 
\end{lemma}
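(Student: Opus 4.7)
My plan is to establish (i) first via an injective map, then derive (ii) as a specialisation and (iii) via a covering argument.

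For (i), the crucial observation is that since $m \in A$, sum-freeness forces $a + m \notin A$ whenever $a \in A$. I would define a map $\phi: A \cap ([u,v] \cup [u+m, v+m]) \to [u, v]$ by setting $\phi(a) = a$ for $a \in [u, v]$ and $\phi(a) = a - m$ for $a \in [v+1, v+m]$. This is well-defined, and I claim it is injective. Indeed, if $\phi(a_1) = \phi(a_2)$ with $a_1 < a_2$, the only non-trivial case is $a_1 \in [u,v]$ and $a_2 \in [v+1, v+m]$, which forces $a_2 = a_1 + m$. But then $a_1, m, a_2$ all lie in $A$ and satisfy $a_1 + m = a_2$, contradicting sum-freeness. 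Injectivity yields $\card{A \cap ([u,v] \cup [u+m, v+m])} \le v - u + 1$, which is exactly (i).

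Part (ii) is then immediate: taking $v = u + m - 1$ in (i), the union $[u, u+m-1] \cup [u+m, u+2m-1]$ is precisely $[u, u+2m-1]$, and the bound becomes $v - u + 1 = m$.

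For (iii), I plan a covering argument built on (ii). Write $L = v - u + 1$ and $L = 2km + r$ with $0 \le r < 2m$. Partition $[u, v]$ into $k$ consecutive disjoint blocks of length $2m$ followed by a leftover of length $r$. By (ii), each full block contains at most $m$ elements of $A$. For the leftover, if $r \le m$ the trivial bound $r$ suffices, while if $m < r < 2m$ embedding it into a window of length $2m$ and applying (ii) again bounds it by $m$. A short two-line case check then verifies $\card{A \cap [u, v]} \le (L + m)/2$ in both regimes. The only subtle point in the whole argument is handling the overlap case $u + m \le v$ in (i), which is accommodated automatically by the chosen one-sided definition of $\phi$ (identity on $[u,v]$, shift on $[v+1,v+m]$); beyond that there is no serious obstacle.
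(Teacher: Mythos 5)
Your proof is correct and takes essentially the same approach as the paper: your injection $\phi$ in part (i) is simply a more formal phrasing of the paper's union bound over the pairs $\{i, m+i\}$ for $i \in [u,v]$, part (ii) is the same specialisation, and your block-covering argument for (iii) matches the paper's (with the cosmetic difference that you set $v-u+1 = 2km+r$ while the paper sets $v-u = 2km+r$, both of which work out).
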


We emphasise that in the first condition, the two intervals $[u,v]$ and $[u+m,v+m]$ are not necessarily disjoint.

We are now in position to prove Lemma \ref{lem:large}.

\begin{proof}[Proof of Lemma \ref{lem:large}]
Throughout the proof let $m$ denote the minimum element of $A$. In the first step, we show that $m$ is not much larger than $n/5$. 

\begin{claim}\label{claim:large}
$m \le (1/5+15\eta)n$.
\end{claim}

\begin{proof}
Suppose to the contrary that $m>(1/5+15\eta)n$. As $m\in A$, we may apply Lemma \ref{lem:long-interval} (ii) to $u=n-2m+1$ and obtain 
\begin{equation}\label{eq:large-M-2m-M}
\card{A\cap (n-2m,n]} \le m.
\end{equation}
Since $\card{A}>m$ by the assumption, this gives $m=\min(A) \le n-2m$, and so $m\le n/3$. One thus has 
\begin{equation}\label{eq:large-assumption}
(1/5+15\eta)n\le m \le n/3.  
\end{equation}
It follows from \eqref{eq:large-assumption} that $[m,n]$ is covered by the intervals $\left[m,\frac12(n-m)\right]$, $\left(\frac12 (n-m),\frac12 n\right]$, $(n-2m,n]$ and $ \left[m+m,m+\frac12(n-m)\right]$; so also is $A$.\footnote{One may verify this claim for $n/5 \le m \le n/4$, and for $n/4\le m \le n/3$ separately.} For the remainder of the proof we shall use this information to bound $\card{A}$.

Applying Lemma \ref{lem:long-interval} (i) with $u=m$ and $v=\tfrac12(n-m)$ we find 
\begin{equation}\label{eq:large-two-intervals}
\card{A\cap \left(\left[m,\tfrac12(n-m)\right] \cup \left[m+m,m+\tfrac12(n-m)\right]\right)} \le n/2-3m/2+1.
\end{equation}

We next bound $\card{A\cap\left(\frac12 (n-m),\frac12 n\right]}$. For abbreviation, let $B=A\cap \left(\frac12 (n-m),\frac12 n\right]$. Using \eqref{Cauchy-Davenport} gives
\begin{equation*}
\card{B} \le \card{2B}/2+1/2.
\end{equation*}

To estimate $\card{2B}$, we first observe that $2B \subseteq [n-m+1,n]$ as $B\subseteq \left(\frac12 (n-m),\frac12 n\right]$, and $(A-A)_{+} \subseteq [n-m]$ since $A\subseteq [m,n]$. Moreover since $A$ is sum-free and $B\subseteq A$, we must have $A\cap 2B=\emptyset$ and $A\cap (A-A)_{+}=\emptyset$. Hence $2B, (A-A)_{+}$ and $A$ are disjoint subsets of $[n]$, resulting in  
\begin{equation*}
\card{2B} \le n-\card{A}-\card{(A-A)_{+}}.
\end{equation*}

Note that $d(A)=1$ by the assumption, and $n-m \le 2\card{A}-3$ by \eqref{eq:large-assumption} and the assumption that $\card{A} \ge (2/5-\eta)n$ and $\eta \ge 1/n$. Lemma \ref{lem:Lev-Smeliansky} then implies
\begin{equation*}
\card{A}+\card{(A-A)_{+}} \ge \min\left\{\tfrac32\card{A}+\tfrac12(n-m-1),\tfrac52\card{A}-2\right\}=\tfrac32\card{A}+\tfrac12(n-m-1).
\end{equation*}

Assembling all the information, we get
\begin{equation}\label{eq:large-nm2n2}
\card{A\cap \left(\tfrac12 (n-m),\tfrac12 n\right]} \le n/4 + m/4 - 3\card{A}/4+3/4.
\end{equation}

Recalling that $A$ is covered by the intervals $\left[m,\frac12(n-m)\right]$, $\left(\frac12 (n-m),\frac12 n\right]$, $(n-2m,n]$ and $\left[m+m,m+\frac12(n-m)\right]$, and using estimates \eqref{eq:large-M-2m-M}, \eqref{eq:large-two-intervals} and \eqref{eq:large-nm2n2}, we deduce that $\card{A}\le 3n/4-m/4-3\card{A}/4+7/4$. Since $\card{A}\ge (2/5-\eta)n$ and $\eta \ge 1/n$ by the assumption, this leads to $m\le 3n-7\card{A}+7 \le (1/5+14\eta)n$, which contradicts our hypothesis that $m\ge (1/5+15\eta)n$.
\end{proof}

In the second step, we establish an approximate version of the lemma.

\begin{claim}\label{claim:large-approximate}
All integers in $\left[(\frac15+\sqrt{\eta})n,(\frac25-\sqrt{\eta})n\right]\cup \left[(\frac45+\sqrt{\eta})n,n\right]$, with at most $14\sqrt{\eta}n$ exceptions, belong to $A$.
\end{claim}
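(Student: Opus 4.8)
Write $m:=\min(A)$. By Claim~\ref{claim:large} and the hypotheses of Lemma~\ref{lem:large}, $\left(\tfrac15-\sqrt\eta\right)n\le m\le\left(\tfrac15+15\eta\right)n$, so $m$ is within $O(\sqrt\eta n)$ of $n/5$; recall also $A\subseteq[m,n]$, $n\in A$ and $d(A)=1$. The plan is to first pin down $\card A$, then locate it. Since $A$ is sum-free, $A\cap(A-A)_{+}=\emptyset$, and since $A\subseteq[m,n]$, $(A-A)_{+}\subseteq[1,n-m]$; hence $A$ and $(A-A)_{+}$ are disjoint subsets of $[1,n]$ with $\card A+\card{(A-A)_{+}}\le n$. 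As $\el(A)=n-m+1$, Lemma~\ref{lem:Lev-Smeliansky} gives $\card{(A-A)_{+}}\ge\min\!\big\{\tfrac12(\card A+n-m-1),\,\tfrac32\card A-2\big\}$; combining this with the previous inequality and $m\le\left(\tfrac15+15\eta\right)n$ yields, in either case of the minimum, $\card A\le\left(\tfrac25+5\eta\right)n+1$. Together with $\card A\ge\left(\tfrac25-\eta\right)n$ this pins $\card A=\left(\tfrac25+O(\eta)\right)n$ and $\card{(A-A)_{+}}=\left(\tfrac35+O(\eta)\right)n$, whence $A$ and $(A-A)_{+}$ \emph{almost partition} $[1,n]$: the uncovered set $[1,n]\setminus\big(A\cup(A-A)_{+}\big)$ has only $O(\eta n)$ elements. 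Since $(A-A)_{+}\subseteq[1,n-m]$ covers none of $(n-m,n]$, and $(n-m,n]$ has $m$ elements, we deduce $\card{A\cap(n-m,n]}\ge m-O(\eta n)$; as $n-m\le\left(\tfrac45+\sqrt\eta\right)n$ this already shows $A$ contains all but $O(\eta n)$ of the upper target interval $\left[\left(\tfrac45+\sqrt\eta\right)n,n\right]$.

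For the lower target interval I would exploit the near-fullness of $A$ on the \emph{top block} $(n-m,n]$ to force the bottom part of $A$ to be short. For any $b\in A$, the set $\big(A\cap(n-m,n]\big)-b$ is disjoint from $A$ (by sum-freeness) and contained in $(A-A)_{+}$, and because $A$ nearly fills $(n-m,n]$ it differs from the length-$m$ interval $(n-m-b,n-b]$ by $O(\eta n)$ points; hence $A$ meets $(n-m-b,n-b]$ in at most $O(\eta n)$ points. Taking $b=m$ shows $A$ almost avoids $(n-2m,n-m]$, so $A\cap[m,n-m]$ lies, up to $O(\eta n)$ exceptions, in $[m,n-2m]$. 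Running the observation over all $b\in A\cap[m,n-2m]$ at once — and checking that these $b$'s cannot leave an $A$-free gap much longer than $m$, since a longer gap would contribute an extra block of $(A-A)_{+}$ disjoint from those already counted and so violate $\card{(A-A)_{+}}=\left(\tfrac35+O(\eta)\right)n$ — one gets that $A\cap[m,n-m]$ misses $(n-m-M',n-m]$ up to $O(\sqrt\eta n)$ points, where $M':=\max\big(A\cap[m,n-2m]\big)$. Thus $\max\big(A\cap[m,n-m]\big)\le n-m-M'+O(\sqrt\eta n)$, and since $M'\le\max\big(A\cap[m,n-m]\big)$ this self-referential bound gives $M'\le\tfrac12(n-m)+O(\sqrt\eta n)=\left(\tfrac25+O(\sqrt\eta)\right)n$. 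Combined with $\card{A\cap[m,n-m]}=\card A-\card{A\cap(n-m,n]}=\left(\tfrac15+O(\sqrt\eta)\right)n$, it follows that $A\cap[m,n-m]$ occupies all but $O(\sqrt\eta n)$ of the interval $\big[m,\left(\tfrac25+O(\sqrt\eta)\right)n\big]\supseteq\left[\left(\tfrac15+\sqrt\eta\right)n,\left(\tfrac25-\sqrt\eta\right)n\right]$.

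It then remains to track the implied constants through the above and verify that, for $\eta\le1/160^{2}$, the number of points of $\left[\left(\tfrac15+\sqrt\eta\right)n,\left(\tfrac25-\sqrt\eta\right)n\right]\cup\left[\left(\tfrac45+\sqrt\eta\right)n,n\right]$ not lying in $A$ is at most $14\sqrt\eta n$; the two target intervals are deliberately shrunk by $\sqrt\eta n$ to absorb the uncertainty in $m$ and in the top of the lower block. I expect the crux to be the step locating the lower block, i.e. converting the difference-set geometry into a clean bound on $\max(A\cap[m,n-m])$: the two places demanding care are bounding the length of $A$-free gaps inside $[m,n-2m]$ (so that the union of the forbidden intervals $(n-m-b,n-b]$ stays essentially connected) and controlling the $O(\eta n)$ exceptional points of the near-partition robustly enough that they do not spoil the inequality $M'\le\left(\tfrac25+O(\sqrt\eta)\right)n$. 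By contrast, the size estimate and the top-block bound are short, and the final constant-chasing is routine.
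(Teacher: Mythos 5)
Your first half is sound and essentially tracks the paper: from Claim~\ref{claim:large}, Lemma~\ref{lem:Lev-Smeliansky}, and the disjointness of $A$ and $(A-A)_+$ you correctly pin down $\card{A}=(\tfrac25+O(\eta))n$, $\card{(A-A)_{+}}=(\tfrac35+O(\eta))n$, and $\card{A\cap(n-m,n]}\ge m-O(\eta n)$; this last estimate is exactly the paper's~\eqref{eq:large-approximate-nmn} and already handles the upper target interval. Likewise, the single-$b$ application of your ``forbidden interval'' observation (with $b=m$, showing $A$ nearly avoids $(n-2m,n-m]$) is correct.

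The gap is in the paragraph that locates the lower block, and it is a real one rather than merely unchecked constants. Three separate steps there do not go through as written. First, the assertion that gaps in $A\cap[m,n-2m]$ of length much more than $m$ would ``contribute an extra block of $(A-A)_+$ and so violate $\card{(A-A)_+}=(\tfrac35+O(\eta))n$'' is not argued; the paper uses exactly this kind of gap-to-$(A-A)_+$ bookkeeping in its proof of Lemma~\ref{lem:middle} (via Lemma~\ref{lem:summation}), but carrying it out there takes several nontrivial steps, so it cannot be taken for granted here. Second, even granting that all gaps in $A\cap[m,M']$ are $\le m$, running the bound ``$A$ has $\le 10\eta n$ points in $(n-m-b,n-b]$'' over every $b\in A\cap[m,M']$ does not combine into an $O(\sqrt\eta\,n)$ bound on $\card{A\cap(n-m-M',n-m]}$: the intervals $(n-m-b,n-b]$ overlap heavily, but each point near the top of $(n-m-M',n-m]$ is covered only by the few $b$'s near $m$, so the $10\eta n$ bounds simply accumulate over $\Theta(n)$ values of $b$ and give something of order $\eta n^2$, not $\sqrt\eta\,n$. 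Third, even with the claimed ``misses $(n-m-M',n-m]$ up to $O(\sqrt\eta\,n)$ points'' in hand, the inference $\max(A\cap[m,n-m])\le n-m-M'+O(\sqrt\eta\,n)$ does not follow: a small number of exceptional points can still sit arbitrarily close to $n-m$, so a count of exceptions gives no bound on the maximum, and without that bound the ``self-referential'' inequality $M'\le n-m-M'+O(\sqrt\eta\,n)$ never closes.

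The paper sidesteps all of this by a different, much shorter route through the same obstacle. Instead of iterating differences from the top block, it observes that $(2\cdot A)\cap(n-m,n]$ and $A\cap(n-m,n]$ are disjoint inside the $m$-element interval $(n-m,n]$, which immediately gives $\card{A\cap(\tfrac12(n-m),\tfrac12 n]}\le 10\eta n$; it then controls $\card{A\cap(\tfrac12 n,n-2m]}$ by showing, via Cauchy--Davenport applied to $2A\cap[2m,n-2m]$ and the identity $A\cap 2A=\emptyset$, that this piece of $A$ is small (estimates~\eqref{eq:large-approximate-nmn3m}--\eqref{eq:large-approximate-n2n2m}). Your difference-set idea with a single $b=m$ is a valid ingredient, but to complete the lower-interval step you should replace the ``run over all $b$'' heuristic with something in the spirit of the paper's $2\cdot A$ and $2A$ arguments, or else prove precise quantitative versions of the gap bound and of the union estimate that your sketch only asserts.
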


Before proving Claim \ref{claim:large-approximate}, we shall use it to finish the proof of Lemma \ref{lem:large}. Suppose to the contrary that $A \nsubseteq \left[(\frac15-\sqrt{\eta})n,(\frac25+32\sqrt{\eta})n\right] \cup \left[(\frac45-31\sqrt{\eta})n,n\right]$. Then there exists
$a \in A \cap \left[(\frac25+32\sqrt{\eta})n,(\frac45-31\sqrt{\eta})n\right]$
since $\min(A) \ge (1/5-\sqrt{\eta})n$ by the assumption. From this we get 
\[
a+(1/5+\sqrt{\eta})n\le (1-30\sqrt{\eta})n, \ \text{and} \ a+(2/5-\sqrt{\eta})n \ge (4/5+31\sqrt{\eta})n,
\] 
showing that the intervals $a+\left[(\frac15+\sqrt{\eta})n, (\frac25-\sqrt{\eta})n\right]$ and $\left[(\frac45+\sqrt{\eta})n,n\right]$ have at least $\min\{29\sqrt{\eta}n,(1/5-3\sqrt{\eta})n\}=29\sqrt{\eta}n$ elements in common. Thus, using pigeonhole principle and Claim \ref{claim:large-approximate}, we find $a+b=c$ for some $b,c \in A$, which contradicts the assumption that $A$ is sum-free.
\end{proof}

Finally we give a proof of Claim \ref{claim:large-approximate} using Claim \ref{claim:large}, Lemmas \ref{lem:Lev-Smeliansky} and \ref{lem:long-interval}.

\begin{proof}[Proof of Claim \ref{claim:large-approximate}]
As $(1/5-\sqrt{\eta})n \le m \le (1/5+15\eta)n$ and $\eta \le 1/160^2$ by Claim \ref{claim:large} and the assumption, we have the following chain of inequalities:
\begin{equation}\label{large:approximate-chain}
m \le \tfrac12 (n-m) \le \tfrac12 n \le n-2m \le n-m \le n.
\end{equation}
We shall use \eqref{large:approximate-chain} to prove the claim which, roughly speaking, states that 
\[
A \approx \left[m,\tfrac12(n-m)\right]\cup (n-m,n].
\]
(Note that $m\approx \tfrac15 n$, $\tfrac12 (n-m) \approx \tfrac25 n$, $n-2m \approx \tfrac35 n$ and $n-m\approx \tfrac45 n$.)
	
Since $d(A)=1$, it follows from Lemma \ref{lem:Lev-Smeliansky} that 
\[
\card{A}+\card{(A-A)_{+}} \ge \min\left\{\tfrac32\card{A}+\tfrac12(n-m-1),\tfrac52\card{A}-2\right\} \ge (1-10\eta)n,
\]
where the last inequality holds since $m \le (1/5+15\eta)n$ by Claim \ref{claim:large}, and $\card{A}\ge (2/5-\eta)n$ and $\eta \ge 1/n$ by the assumption. Moreover, as $A$ is a sum-free subset of $[m,n]$, $A\cap [n-m]$ and $(A-A)_{+}$ are disjoint subsets of $[n-m]$. Hence
\begin{align}\label{eq:large-approximate-nmn}
\card{A\cap (n-m,n]}& \ge \card{A}+\card{(A-A)_{+}}-\card{[n-m]} \ge m-10\eta n.
\end{align}

Since $A$ is sum-free, $(2\cdot A) \cap (n-m,n]$ and $A\cap (n-m,n]$ are disjoint, which gives
\begin{equation*}
\card{A\cap \left(\tfrac12 (n-m),\tfrac12 n \right]}= \card{(2\cdot A) \cap (n-m,n]}\le \card{(n-m,n]\setminus A} \overset{\eqref{eq:large-approximate-nmn}}{\le} 10\eta n.
\end{equation*}
We know from \eqref{large:approximate-chain} that $\card{A\cap\left([m,\tfrac12(n-m)] \cup (\tfrac12 n,n-2m]\right)}$ is at least 
	\begin{equation}\label{eq:large-approximate-mnm2}
	\card{A}-\card{A\cap \left(\tfrac12 (n-m),\tfrac12 n \right]}-\card{A\cap (n-2m,n]}\ge (2/5-11\eta)n-m
	\end{equation}
as $\card{A}\ge (2/5-\eta)n$ by the assumption, $\card{A\cap \left(\tfrac12 (n-m),\tfrac12 n \right]} \le 10\eta n$ by the previous estimate, and $\card{A\cap (n-2m,n]} \le m$ by Lemma \ref{lem:long-interval} (ii).
	
	We next apply Lemma \ref{lem:long-interval} (i) with $u=\tfrac12 n-m$ and $v=n-3m$ to obtain 
\begin{equation}\label{eq:large-approximate-nmn3m}
\card{A\cap \left((\tfrac12 n-m,n-3m]\cup (\tfrac12 n, n-2m]\right)} \le n/2-2m+1.
\end{equation} 
Using \eqref{large:approximate-chain} once again, we may bound $\card{A\cap [m,\tfrac12 n-m]}$ from below by
	\begin{gather*}
\card{A\cap\left([m,\tfrac12(n-m)] \cup (\tfrac12 n,n-2m]\right)}-\card{A\cap \left((\tfrac12 n-m,n-3m]\cup (\tfrac12 n, n-2m]\right)}\\
(\text{by \eqref{eq:large-approximate-mnm2} and \eqref{eq:large-approximate-nmn3m}}) \quad \le -(1/10+11\eta)n+m-1.
	\end{gather*}
This implies $\card{2A\cap[2m,n-2m]} \ge -(1/5+22\eta)n+2m-3$, due to \eqref{Cauchy-Davenport}. Moreover, since $m \ge (1/5-\sqrt{\eta})n$ and $\eta \le 1/160^2$ by the assumption, one has $(\tfrac12 n,n-2m] \subseteq [2m,n-2m]$. We thus get
$\card{(\tfrac12 n,n-2m]\setminus 2A} \le \card{[2m,n-2m]\setminus 2A}\le (6/5+22\eta)n-6m+4$. From this and the assumption that $2A\cap A=\emptyset$, we obtain 
\begin{equation}\label{eq:large-approximate-n2n2m}
\card{A\cap (\tfrac12 n,n-2m]} \le \card{(\tfrac12 n,n-2m]\setminus 2A}\le (6/5+22\eta)n-6m+4
\end{equation}

Clearly we can bound $\card{\left[m,\tfrac12(n-m)\right]\setminus A}$ from above by  
	\begin{align}\label{eq:large-approximate-m2nm} \notag
	 &\card{\left[m,\tfrac12(n-m)\right]}+\card{A\cap (\tfrac12 n,n-2m]}-\card{A\cap \left([m,\tfrac12(n-m)] \cup (\tfrac12 n,n-2m]\right)} \\
	& \quad \quad (\text{by \eqref{eq:large-approximate-n2n2m} and \eqref{eq:large-approximate-mnm2}}) \quad \le (13/10+33\eta)n-13m/2+5 \le 13 \sqrt{\eta}n
	\end{align}
	assuming $m\ge (1/5-\sqrt{\eta})n$ and $1/n \le \eta \le 1/160^2$.
	
	From \eqref{eq:large-approximate-m2nm} and \eqref{eq:large-approximate-nmn} we see that all elements of $\left[m,\tfrac12(n-m)\right]\cup (n-m,n]$ belong to $A$, with $(13\sqrt{\eta}+10\eta)n \le 14\sqrt{\eta}n$ exceptions. As $\left[m,\tfrac12 (n-m)\right] \cup (n-m,n]$ contains $\left[(\tfrac15+\sqrt{\eta})n,(\tfrac25-\sqrt{\eta})n\right]\cup \left[(\tfrac45+\sqrt{\eta})n,n\right]$ when $(1/5-\sqrt{\eta})n \le m \le (1/5+15\eta)n$ and $\eta \le 1/160^2$, the claim follows.
\end{proof}

\subsection{Middle range}\label{subsec:structure-middle}
Our goal is to prove Lemma \ref{lem:middle}. For this purpose, we shall require the following variant of a fairly simple result due to Deshouillers et al. \cite[Lemma 2.3]{DFST99}. We provide the proof in the appendix for completeness of exposition.

\begin{lemma}\label{lem:summation}
	Let $k\in \bN$ and $\eps \ge 0$, and let $A \subseteq [0,k-1]$ and 
	\[
	B=\{b_1<\ldots<b_{\el}\}
	\]
	be two sets of integers such that $\card{A} \ge (1-\eps)k$ and $b_{i+1}-b_i \le k$ for each $i \in [\el-1]$. Then
	\[
	\card{A+B} \ge (1-4\eps)(k+\el(B)).
	\]
\end{lemma}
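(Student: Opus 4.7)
The approach is to upper-bound the ``defect'' $M := ([0,k-1]+B) \setminus (A+B)$. First, the hypothesis $b_{i+1}-b_i \le k$ ensures that the translated intervals $[b_i, b_i+k-1]$ overlap (or abut) consecutively, so their union telescopes to the single interval $[0,k-1]+B = [b_1, b_\el + k - 1]$, of length $k + \el(B) - 1$. Since $A \subseteq [0, k-1]$, the sumset $A+B$ is contained in this interval, so it suffices to show $|M| \le 4\eps(k + \el(B))$ up to a harmless additive constant.

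Set $\bar A := [0,k-1] \setminus A$, so $|\bar A| \le \eps k$. A point $x \in [b_1, b_\el + k - 1]$ lies in $M$ if and only if $x - b \in \bar A$ for every $b \in B$ with $x - b \in [0, k-1]$, that is, for every $b \in B_x := B \cap [x-k+1, x]$. Notice in particular that $\{x-b : b \in B_x\}$ is a set of $|B_x|$ distinct elements of $\bar A$, which forces the cardinality restriction $|B_x| \le |\bar A|$ for every $x \in M$. Intuitively, each bad point is pinned down by many different constraints at once.

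Now decompose $[b_1, b_\el + k - 1]$ into the disjoint pieces $T_i := [b_i, b_{i+1}-1]$ for $1 \le i \le \el-1$ and $T_\el := [b_\el, b_\el+k-1]$. For $x \in T_i$, we have $b_i \in B_x$, so any $x \in M \cap T_i$ satisfies $x - b_i \in \bar A \cap [0, d_i-1]$ (for $i < \el$) or $x - b_\el \in \bar A$ (for $i = \el$), yielding
\[
|M| \le \sum_{i=1}^{\el-1} |\bar A \cap [0, d_i - 1]| + |\bar A|.
\]
Swapping the order of summation, the right-hand sum equals $\sum_{a \in \bar A} \#\{i : d_i > a\} + |\bar A|$, which one controls via the identity $\sum_i d_i = \el(B) - 1$ and the bound $d_i \le k$. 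A clean way to finish is to split the gaps into ``small'' ($d_i \le |\bar A|$) and ``large'' ($d_i > |\bar A|$) classes: the former contribute at most $\sum d_i \le \el(B)-1$ to the sum, while the number of the latter is at most $(\el(B)-1)/|\bar A|$ and each contributes at most $|\bar A|$, giving another $\el(B)-1$.

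The main obstacle is sharpening the above crude bound (which only gives $|M| \le 2(\el(B)-1) + \eps k$) to the target $|M| \le 4\eps(k+\el(B))$. The leverage needed is precisely the cardinality restriction $|B_x| \le |\bar A|$ from the second paragraph: for $x$ in the ``multiply-covered'' region the constraint $x - b_j \in \bar A$ must hold simultaneously for several consecutive $b_j$'s, not merely for the single witness $b_i$ used above. Incorporating this extra information, together with the identity $\sum_{x \in [b_1, b_\el + k - 1]} |B_x| = \el k$, should reduce the bound by a factor that restores the correct constant. I do not foresee a deep difficulty here, but the bookkeeping (balancing singly-covered against multiply-covered regions, and small gaps against large ones) is the delicate part of the proof.
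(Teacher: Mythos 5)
Your proposal is incomplete, and the gap is substantive rather than a routine bookkeeping issue. Your crude bound $|M|\le 2(\el(B)-1)+\eps k$ is manifestly insufficient: the target $4\eps(k+\el(B))$ tends to $0$ as $\eps\to 0$ at fixed $k,\el(B)$, whereas $2(\el(B)-1)$ does not, so for small $\eps$ and long $B$ the crude bound is off by an unbounded factor. You correctly diagnose that the extra leverage must come from the restriction $|B_x|\le|\bar A|$, but you do not carry out the balancing argument, and you signal yourself that it is ``the delicate part of the proof.'' As written, this is a plan, not a proof. Moreover, an additive slack is not actually harmless here: you need $|M|\le 4\eps(k+\el(B))-1$, not $\le 4\eps(k+\el(B))+O(1)$, so the constant would have to be tracked anyway.

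The paper's proof avoids all of this by choosing a different decomposition: instead of partitioning $[b_1,b_\el+k-1]$ into the gap-intervals $T_i=[b_i,b_{i+1}-1]$ (which have wildly varying lengths and force you to juggle small versus large gaps), it covers $[b_1,b_\el+k]$ by \emph{equal-length} windows $[x,x+k-1]$ with $x\in[b_1,b_\el+1]$. The key observation, which your approach misses, is that each such window satisfies $[x,x+k-1]\subseteq (b_i+[0,k-1])\cup(b_{i+1}+[0,k-1])$ where $b_i$ is the largest element of $B$ not exceeding $x$; hence $M\cap[x,x+k-1]\subseteq(b_i+\bar A)\cup(b_{i+1}+\bar A)$ and every window contains at most $2\eps k$ defective points. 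Using two witnesses $b_i,b_{i+1}$ per window rather than one witness per gap is precisely where the factor $2$ (and then $4$ after covering by roughly $(k+\el(B))/k$ windows) comes from, cleanly and without any case analysis. Your single-witness decomposition discards exactly this information and is why you land short by a factor unbounded in $\el(B)/(\eps k)$.

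If you want to salvage your route, the right move is to replace ``$x-b_i\in\bar A$'' with ``$x-b_i\in\bar A$ \emph{and} $x-b_{i+1}\in\bar A$ whenever $x-b_{i+1}\ge 0$,'' which effectively recreates the paper's two-translate covering inside each $T_i$; but at that point you are reproving the paper's lemma in a more convoluted guise. I would recommend the fixed-width covering directly.
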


We are now able to prove Lemma \ref{lem:middle}.

\begin{proof}[Proof of Lemma \ref{lem:middle}]
Throughout the proof let $m$ denote the minimum element of $A$.
Suppose to the contrary that $\card{A} \ge (2/5-2\eta)n$. 
Since $A$ is a sum-free subset of $[n]$, we thus have
\begin{equation}\label{eq:middle-aa-upper-bound}
\card{(A-A)_{+}} \le n-\card{A} \le \left(3/5+2\eta\right)n.
\end{equation}
To get a contradiction we seek to show $\card{(A-A)_{+}}\ge (3/5+3\eta)n$. The following claim serves as an intermediate step.

\begin{claim}\label{claim:middle} Let $\eps=(5\eta n+2)/m$, then we have
\begin{itemize}
	\item[(i)] $\card{A \cap [n-m+1,n]} \ge (1-\eps)m$,
	\item[(ii)] $\card{(A-A)_{+}\cap [m-1]} \ge (1-\eps)m-1$.
\end{itemize}	
\end{claim}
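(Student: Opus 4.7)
The plan is to exhibit a near-partition of $[n]$ into the three pieces $A$, $(A-A)_+$, and a small ``gap'' $G$, and then exploit that $A$ is confined to $[m,n]$ while $(A-A)_+$ is confined to $[1,n-m]$. Once such a partition is in hand, localising the small gap at each of the two ends of $[n]$ yields both claims simultaneously.

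The first step is to lower bound $\card{(A-A)_+}$. Since $d(A)=1$ by hypothesis, Lemma \ref{lem:Lev-Smeliansky} gives
\[
\card{(A-A)_+} \ge \min\bigl\{\tfrac{1}{2}(\card{A} + \ell(A) - 2),\ \tfrac{3}{2}\card{A} - 2\bigr\}.
\]
Using $\card{A} \ge (2/5 - 2\eta)n$, the second quantity is at least $(3/5 - 3\eta)n - 2$. For the first, I would note that $m \le (1/5 - \sqrt{\eta})n$ forces $\ell(A) = n - m + 1 \ge (4/5 + \sqrt{\eta})n$, and a short calculation again yields at least $(3/5 - 3\eta)n - 2$. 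Hence $\card{(A-A)_+} \ge (3/5 - 3\eta)n - 2$.

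The second step is the disjointness observation: since $A$ is sum-free, a positive difference $a-b \in A$ (with $a,b\in A$, $a>b$) would produce the Schur triple $(b,a-b,a)$, so $A \cap (A-A)_+ = \emptyset$. Setting $G := [n]\setminus \bigl(A \cup (A-A)_+\bigr)$ and combining the previous lower bound with $\card{A} \ge (2/5 - 2\eta)n$, one obtains
\[
\card{G} \le n - \card{A} - \card{(A-A)_+} \le 5\eta n + 2 = \eps m.
\]

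The final step is to localise $G$. Since $A \subseteq [m,n]$, every positive difference of elements of $A$ is at most $n-m$, so $(A-A)_+ \subseteq [1,n-m]$; hence $[n-m+1,n] \subseteq A \cup G$ and
\[
\card{A\cap [n-m+1,n]} \ge m - \card{G} \ge (1-\eps)m,
\]
which is (i). Symmetrically, $A \cap [1,m-1] = \emptyset$ forces $[1,m-1] \subseteq (A-A)_+ \cup G$, whence $\card{(A-A)_+ \cap [m-1]} \ge (m-1) - \card{G} \ge (1-\eps)m - 1$, which is (ii). The only mildly delicate point is the bound in the first step: one must check that both quantities inside the $\min$ of Lemma \ref{lem:Lev-Smeliansky} clear the threshold $(3/5-3\eta)n-2$. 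Neither hypothesis is wasted in doing so — the lower bound on $\card{A}$ handles one term, while the upper bound on $m$ handles the other.
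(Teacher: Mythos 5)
Your proposal is correct and essentially the paper's argument: both hinge on the Lev--Smeliansky bound and the fact that $A$ and $(A-A)_+$ are disjoint subsets of $[n]$ confined to $[m,n]$ and $[1,n-m]$ respectively, leaving a ``gap'' of size at most $5\eta n+2$. The only cosmetic difference is in part (ii), where the paper deduces it from (i) by subtracting $\min(A\cap[n-m+1,n])$ from the rest, whereas you appeal symmetrically to the same gap set $G$ localised in $[1,m-1]$; both are valid one-line deductions.
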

\begin{proof}
(i) As $d(A)=1$ and $\el(A)=n-m+1$, Lemma \ref{lem:Lev-Smeliansky} gives
\[
\card{A}+\card{(A-A)_{+}} \ge \min\left
\{\tfrac32\card{A}+\tfrac12(n-m-1),\tfrac52\card{A}-2\right\}\ge (1-5\eta)n-2
\]
for $m \le n/5$ and $\card{A} \ge (2/5-2\eta)n$. 
Moreover $A\cap [n-m]$ and $(A-A)_{+}$ are disjoint subsets of $[n-m]$ since $A$ is sum-free set in $[m,n]$. Therefore, we have 
\[
\card{A \cap [n-m+1,n]} \ge \card{A}+\card{(A-A)_{+}}-\card{[n-m]} \ge m-(5\eta n+2)=(1-\eps)m.
\]

\noindent (ii) It follows from (i) that
\[
\card{(A-A)_{+}\cap [m-1]} \ge \card{A\cap [n-m+1,n]}-1\ge (1-\eps)m-1. \qedhere
\]
\end{proof}

In the final step, we shall bound $\card{(A-A)_{+}}$ from below.

\begin{claim}\label{claim:middle-aa} 
$\card{(A-A)_{+}} \ge (3/5+3\eta)n$.	
\end{claim}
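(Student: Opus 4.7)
The plan is to prove $|(A-A)_+| \ge (3/5+3\eta)n$, which directly contradicts~\eqref{eq:middle-aa-upper-bound}. The key tool will be Lemma~\ref{lem:summation}, applied with the dense set $X := n - B \subseteq [0, m-1]$ (by Claim~\ref{claim:middle}(i), $|X|\ge(1-\eps)m$, so the density hypothesis holds with $k=m$) and ``spread'' sets derived from $A$.

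First, I would decompose $A$ into maximal blocks $U_1, \ldots, U_t$ in which consecutive elements differ by at most $m$, so that the blocks themselves are separated by gaps strictly greater than $m$. Applying Lemma~\ref{lem:summation} to $X$ and each $U_i$ (as the spread set, whose consecutive gaps are at most $m$ by construction) yields
\[
|U_i - B| \;=\; |(n - B) + U_i| \;\ge\; (1-4\eps)\bigl(m + \el(U_i)\bigr).
\]
Because the blocks are pairwise separated by gaps exceeding $m$, the translates $U_i - B$ lie in pairwise disjoint sub-intervals of $[m-n,\, m-1]$; summing gives $|A-B| \ge (1-4\eps)\sum_i (m+\el(U_i))$. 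Now Lemma~\ref{lem:long-interval}(iii), applied to each $U_i$ viewed as a piece of $A$, produces $|U_i|\le(\el(U_i)+m)/2$; summing and multiplying by $2$ yields $\sum_i(m+\el(U_i))\ge 2|A| \ge (4/5-4\eta) n$, so $|A-B| \ge (1-4\eps)(4/5-4\eta) n$.

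Second, both $A-B$ and $B-A=-(A-B)$ are subsets of $A-A$, while their intersection is contained in $[-(m-1), m-1]$ and so has cardinality at most $2m-1$. Hence $|A-A|\ge 2|A-B|-(2m-1)$, which (using $|A-A|=2|(A-A)_+|+1$) gives
\[
|(A-A)_+| \;\ge\; |A-B| - m \;\ge\; (1-4\eps)\bigl(\tfrac{4}{5}-4\eta\bigr) n - m.
\]
Using $\eps \le \sqrt{\eta}/5$ (which follows from the definition $\eps = (5\eta n + 2)/m$ and the hypothesis $m \ge 35\sqrt{\eta}\,n$) together with $m \le (1/5-\sqrt{\eta})n$, short algebra reduces the desired inequality $|(A-A)_+| \ge (3/5+3\eta)n$ to $9\sqrt{\eta}/25 \ge 7\eta$, i.e.\ $\sqrt{\eta}\le 9/175$, which holds throughout our range since $\sqrt{\eta} \le 1/175$ by hypothesis.

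The main obstacle will be uniformity of the final inequality across the full range $35\sqrt{\eta}\,n \le m \le (1/5-\sqrt{\eta})n$. At the lower endpoint $\eps$ is nearly $\sqrt{\eta}/7$ and the $(1-4\eps)$ factor must absorb a loss of about $35\sqrt{\eta}\,n$ from the $-m$ term; at the upper endpoint $\eps=O(\eta)$ is tiny but $m$ itself costs nearly $n/5$. The argument works only because two estimates are tight: the bound $\sum_i(m+\el(U_i))\ge 2|A|$ exploits Lemma~\ref{lem:long-interval}(iii) without slack, and the symmetry step loses only the $2m-1$ elements living in $[-(m-1), m-1]$.
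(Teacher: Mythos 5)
Your proof is correct, and it shares the paper's core engine: decompose $A$ into blocks whose internal gaps are at most $m$, apply Lemma~\ref{lem:summation} with the dense tail $B = A \cap [n-m+1,n]$ to each block, and sum. The bookkeeping differs, though. The paper restricts the block decomposition to $A \cap [n-2m]$ so that each $D - A_i$ lands in the positive range $[m+1,n-m]$ and is disjoint from $(A-A)_+ \cap [m-1]$, which is handled separately via Claim~\ref{claim:middle}(ii); Lemma~\ref{lem:long-interval}(ii) is then used to absorb the tail $A \cap [n-2m+1,n]$ into the count. You instead decompose all of $A$, let $A-B$ spill into negative values, and recover $(A-A)_+$ by the symmetry step $|A-A| \ge 2|A-B| - (2m-1)$; this sidesteps both Claim~\ref{claim:middle}(ii) and Lemma~\ref{lem:long-interval}(ii), at the small cost that the resulting bound $|(A-A)_+| \ge (2-8\eps)|A| - m$ is weaker than the paper's $(-1+7\eps)m + (2-8\eps)|A| - 1$ by roughly $7\eps m$. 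That loss is harmless: as you note, the final inequality reduces to $9\sqrt{\eta}/25 \ge 7\eta$, comfortably true for $\sqrt{\eta}\le 1/175$. So yours is a mildly cleaner organization of the same argument, trading a slightly lossier intersection bound for fewer auxiliary claims.
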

\begin{proof}
Let $\{a_1<a_2<\ldots<a_k\}$ be the set consisting of all elements $a \in A\cap [n-2m]$ such that $A \cap [a-m+1,a-1]=\emptyset$. Denote $a_{k+1}=n-2m+1$, and   $A_{i}=A\cap [a_{i},a_{i+1})$ for $i \in [k]$.
It is not difficult to see that the following holds:\\
$(\ast)$ For each $i\in [k]$, the gap between any two consecutive elements of $A_i$ is less than $m$.

Let $D=A\cap[n-m+1,n]$, and set $\eps=(5\eta n+2)/m$. From Claim \ref{claim:middle} (i) we have $\card{D} \ge (1-\eps)m$. Moreover, property $(\ast)$ implies that we may apply Lemma \ref{lem:summation} to $A=D-(n-m+1)$ and $B=-A_i$, obtaining
\begin{equation}\label{eq:middle-d-a}
\card{D-A_i} \ge (1-4\eps)(m+\el(A_i)) \quad \text{for every $ i\in [k]$}.
\end{equation}
Moreover, using parts (ii) and (iii) of Lemma \ref{lem:long-interval} yields 
\begin{equation}\label{eq:middle-a-upper-bound}
\card{A} =\card{A\cap [n-2m+1,n]}+\sum\card{A_i}\le m+\tfrac12 \sum(m+\el(A_i)).
\end{equation}
Furthermore, we can infer from property $(\ast)$ that $(A-A)_{+}\cap [m-1], D-A_1, \ldots, D-A_k$ are disjoint subsets of $(A-A)_{+}$. So
\begin{align}\label{middle-aa}
\card{(A-A)_{+}} &\ge \card{(A-A)_{+}\cap [m-1]}+\sum\card{D-A_i} \notag\\
(\text{by Claim \ref{claim:middle} (ii), \eqref{eq:middle-d-a}}) \quad &\ge (1-\eps)m-1+(1-4\eps)\sum (m+\el(A_i)) \notag\\ 
(\text{by \eqref{eq:middle-a-upper-bound}}) \quad & \ge (1-\eps)m-1+(1-4\eps)(2\card{A}-2m) \notag\\
\quad & = (-1+7\eps)m+(2-8\eps)\card{A}-1.
\end{align}
Observe that $\eps=(5\eta n+2)/m\le \min\{\tfrac15 \sqrt{\eta},\frac16\}$ since $35\sqrt{\eta}n\le m$ and $1/n\le \eta$ by the assumption. Combining this with the assumption that $m\le \left(1/5-\sqrt{\eta}\right)n$ and $\card{A} \ge \left(2/5-2\eta\right)n$, we conclude that the right hand side of \eqref{middle-aa} is greater than 
\[
(-1/5+\sqrt{\eta})n+(4/5-\tfrac45\sqrt{\eta})n-1\ge (3/5+3\eta)n
\]
when $\eta \le 1/175^2$.
Hence $\card{(A-A)_{+}} \ge \left(3/5+3\eta\right)n$, as promised.
\end{proof}

Claim \ref{claim:middle-aa} obviously contradicts \eqref{eq:middle-aa-upper-bound}. This finishes our proof of Lemma \ref{lem:middle}.
\end{proof}

\subsection{Small range}\label{subsec:structure-small}
This section is devoted to the proof of Lemma \ref{lem:small}. As the proof is quite complicated, we first give a high level overview of our approach. Let $A_0=A\cap [n/2]$. The proof naturally splits into four steps
\begin{itemize}
	\item[1.] Show that $\card{A_0} \ge (1/5-o(1))n$ using Lemma \ref{lem:bootstrap} (i). This step is performed in Claim \ref{claim:small-balanced}.
	\item[2.] Use the estimate from the first step together with inverse theorems (Lemmas \ref{thm:Lev-Smeliansky} and \ref{thm:Jin}) to show that $A_0 \subseteq I_a\cup I_b$, where $I_a=\{a,a+d,\ldots,a+(\el_a-1)d\}$, $I_b=\{b,b+d,\ldots,b+(\el_b-1)d\}$, and $\el_a+\el_b=(1+o(1))\card{A_0}$. This is performed in Claim \ref{claim:small-structure-I}.
	\item[3.] Show that $A_0$ is contained in either $F_{1,4}$ or $F_{2,3}$ (Claim \ref{claim:small-a0-structure-II}). This step is performed as follows:
	\begin{itemize}
	\item[3.1] Combining steps $1$ and $2$ and the property that $A_0$ is sum-free, we obtain a number of inequalities that must be satisfied by the endpoints of $I_a$ and $I_b$.
	\item[3.2] Use the inequalities from the previous step to show that $d=5$, and either $\{a,b\}\equiv \{1,4\} \Mod{5}$ or $\{a,b\}\equiv \{2,3\} \Mod{5}$.
	\end{itemize}
	\item[4.] We use a `bootstrapping' argument (Lemma \ref{lem:bootstrap}) to upgrade the `$50\%$-structured characterisation' of $A$ from step $3$ to a $100\%$-structured characterisation.
\end{itemize}

Our bootstrapping lemma is the following simple result, proven in the appendix, which states that if a set $A$ of integers is dense in some interval $I$, then the difference set and sumset of $A$ contain long subintervals of $I-I$ and $I+I$ respectively.

\begin{lemma}[Folklore]\label{lem:bootstrap}
	Every finite set $A$ of integers has the following properties:
	\begin{itemize}
		\item[(i)] $A-A$ contains $[2\card{A}-\el(A)-1]$;
		\item[(ii)] If $A\subseteq [0,k]$ for some positive $k$, then $2A$ contains $[2k-2\card{A}+2,2\card{A}-2]$.
	\end{itemize}
\end{lemma}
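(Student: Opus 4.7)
The plan is to prove both parts by the same pigeonhole-style intersection argument: given a candidate element $d$ (for part (i)) or $s$ (for part (ii)), exhibit two translates/reflections of $A$ whose union is contained in a short interval, and show that the two translates must therefore intersect, producing the desired representation.

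For part (i), let $m=\card{A}$, and fix an integer $d$ with $1\le d\le 2m-\el(A)-1$. I would consider the sets $A$ and $A+d$. Both lie inside the interval $[\min(A),\max(A)+d]$, which has cardinality $\el(A)+d$. Since $\card{A}+\card{A+d}=2m>\el(A)+d$ by the choice of $d$, the two sets cannot be disjoint. Any common element yields $a_i=a_j+d$ for some $a_i,a_j\in A$, i.e.\ $d\in A-A$. Together with the trivial fact $0\in A-A$, this gives $[0,2m-\el(A)-1]\subseteq A-A$, which contains $[2\card{A}-\el(A)-1]$ as required.

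For part (ii), fix $s$ with $2k-2\card{A}+2\le s\le 2\card{A}-2$; the goal is to find $a,a'\in A$ with $a+a'=s$, equivalently $A\cap (s-A)\ne\emptyset$. Here $A\subseteq [0,k]$ and $s-A\subseteq [s-k,s]$, so their union lies in a single interval whose length depends on whether $s\le k$ or $s\ge k$. In the range $s\le k$ the enveloping interval is $[s-k,k]$ of length $2k-s+1$, so the two sets intersect as soon as $2\card{A}>2k-s+1$, which is exactly the lower bound $s\ge 2k-2\card{A}+2$. In the range $s\ge k$ the envelope is $[0,s]$ of length $s+1$, and intersection is forced by $2\card{A}>s+1$, i.e.\ the upper bound $s\le 2\card{A}-2$. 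In both cases we conclude $s\in 2A$.

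There is no real obstacle here: the argument is a clean double application of the basic inclusion-exclusion principle $\card{X\cap Y}\ge \card{X}+\card{Y}-\card{X\cup Y}$. The only mild care required is the case split in part (ii) to identify the shortest interval containing both $A$ and $s-A$ as a function of $s$; once that split is made, each case reduces to a one-line inequality.
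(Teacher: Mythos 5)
Your proof is correct. For part (i), the paper simply cites \cite[Lemma 2.2]{DFST99}, so there is nothing in the present paper to compare against; your translate-intersection argument (if $1\le d\le 2\card{A}-\el(A)-1$ then $A$ and $A+d$ both lie in the interval $[\min(A),\max(A)+d]$ of size $\el(A)+d<2\card{A}$, hence intersect) is the standard one. For part (ii), the paper takes a slightly different but closely related route: with $s=\card{A}$ and $x\in[2k-2s+2,2s-2]$, it first writes $x=y+z$ with $y,z\in[k-s+1,s-1]$, observes that $y+i,z-i\in[0,k]$ for every $i\in[-(k-s+1),k-s+1]$, and then applies pigeonhole using $\card{[0,k]\setminus A}\le k-s+1$ to find a shift $j$ with $y+j,z-j\in A$. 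You instead intersect $A$ directly with $s-A$ inside the tightest common enveloping interval, splitting into $s\le k$ (envelope $[s-k,k]$, size $2k-s+1$) and $s\ge k$ (envelope $[0,s]$, size $s+1$); in each case the inclusion-exclusion bound $\card{A}+\card{s-A}>\card{\text{envelope}}$ reduces to exactly the endpoint inequality. Both are elementary pigeonhole arguments of the same strength; yours avoids the initial ``central decomposition'' step and the sliding parameter, so it is a touch more streamlined, at the cost of the explicit two-case split on the size of the envelope.
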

These properties are only useful when the size of $A$ is at least $\el(A)/2+1$, though it is convenient not to make this a requirement.

\begin{proof}[Proof of Lemma \ref{lem:small}]
Throughout the proof, let $A_0=A\cap [n/2], A_1=A\setminus A_0$, and $m_e=\min(A\cap E)$.
We shall use Lemma \ref{lem:bootstrap} to show that $\card{A_0}$ is relatively large.

\begin{claim}\label{claim:small-balanced}
$\card{A_0} \ge (1/5-38\sqrt{\eta})n$.
\end{claim}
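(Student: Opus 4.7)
My plan is to bound $|A_1|$ from above by exploiting that the positive differences $(A_1-A_1)_+$ are disjoint from $A_0$ (by sum-freeness), and then deduce the lower bound on $|A_0|$ via $|A_0|=|A|-|A_1|$ and the hypothesis $|A|\ge (2/5-\eta)n$.

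I would start by noting that $A_1\subseteq (n/2,n]$ implies $\el(A_1)\le n/2$. Applying Lemma~\ref{lem:bootstrap}(i) to $A_1$ yields $A_1-A_1\supseteq\{1,\ldots,2|A_1|-\el(A_1)-1\}$; combined with sum-freeness of $A$ and the containment $(A_1-A_1)_+\subseteq (0,n/2)$, this forces $A_0\cap [1,2|A_1|-\el(A_1)-1]=\emptyset$. Since $\min(A_0)=\min(A)\le 35\sqrt{\eta}n$, rearranging gives $2|A_1|-\el(A_1)\le \min(A)$ and hence $|A_1|\le n/4+O(\sqrt{\eta}n)$. Subtracting from $|A|\ge (2/5-\eta)n$ yields the preliminary estimate $|A_0|\ge (3/20-O(\sqrt{\eta}))n$, which falls short of the target $(1/5-38\sqrt{\eta})n$ by roughly $n/20$.

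To close this gap, I would strengthen the lower bound on $|(A_1-A_1)_+|$ via the Lev--Smeliansky estimate in Lemma~\ref{lem:Lev-Smeliansky}, which (when $d(A_1)=1$ and $\el(A_1)\ge 2|A_1|-2$) gives $|(A_1-A_1)_+|\ge 3|A_1|/2-2$. The disjointness $|A_0|+|(A_1-A_1)_+|\le n/2$ combined with $|A_0|+|A_1|\ge (2/5-\eta)n$ then forces $|A_1|\le (1/5+2\eta)n+O(1)$, and hence $|A_0|\ge (1/5-3\eta)n-O(1)\ge (1/5-38\sqrt{\eta})n$ as required. The main technical difficulty I expect lies in the complementary regimes where $d(A_1)>1$ or $\el(A_1)<2|A_1|-2$ (i.e., $A_1$ is dense in a short interval); in the latter case one would exploit that density $>1/2$ in an interval $I\subseteq (n/2,n]$ forces $A_0\cap [1,|I|-1]=\emptyset$ via the shift argument $(A_0+a_1)\cap A=\emptyset$ for $a_1\in A_1$, which in turn produces an even smaller $|A_1|$ and renders the conclusion almost immediate.
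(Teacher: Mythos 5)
Your argument for the case $d(A_1)=1$ tracks the paper closely: the bootstrap lemma (your ``shift argument'' is exactly Lemma~\ref{lem:bootstrap}(i)) forces $\el(A_1)\ge 2|A_1|-35\sqrt{\eta}n-O(1)$, after which Lemma~\ref{lem:Lev-Smeliansky} gives $|(A_1-A_1)_+|\ge \tfrac32|A_1|-O(\sqrt{\eta}n)$ in \emph{both} sub-regimes $\el(A_1)\gtrless 2|A_1|-2$, and the disjointness of $A_0$ and $(A_1-A_1)_+$ in $[n/2]$ closes the argument. You treat the two sub-regimes as separate, whereas they are handled uniformly once the bootstrap constraint on $\el(A_1)$ is in place; your remark that the small-$\el(A_1)$ regime ``produces an even smaller $|A_1|$'' is not what actually happens, but the calculation still works.

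The genuine gap is the case $d(A_1)>1$, which you flag as a difficulty but do not address, and which is in fact the bulk of the paper's argument. If $d(A_1)\ge 3$ one gets $|A_1|\le n/6+1$ and hence a contradiction directly, but $d(A_1)=2$ is entirely realistic: $A_1$ could be (almost) all of $E\cap(n/2,n]$ or all of $O\cap(n/2,n]$, giving $|A_1|\approx n/4$, and then $|A_0|\ge|A|-|A_1|$ only yields $|A_0|\gtrsim (3/20)n$, short of the target. Neither your bootstrap step nor your Lev--Smeliansky step applies here: when $d(A_1)=2$ the bound $2|A_1|-\el(A_1)-1$ is nonpositive, so Lemma~\ref{lem:bootstrap}(i) is vacuous on $A_1$ itself, and Lemma~\ref{lem:Lev-Smeliansky} requires $d(A_1)=1$. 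The paper handles this by rescaling ($A_1=2\cdot B$ or $A_1=2\cdot B+1$, applying the bootstrap to $B$) to show the smallest even element of $A$ is $\ge 3n/10$, and then runs two distinct counting arguments for $A_1\subseteq E$ and $A_1\subseteq O$ involving the interaction of $2\cdot A_0$ (respectively the largest even element $M_e$) with the interval $(n/2,n]$. This is a substantive piece of the proof that your proposal leaves open.
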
 
\begin{proof}
To obtain a contradiction, suppose $\card{A_0} < \left(1/5-38\sqrt{\eta}\right)n$. As $\card{A}\ge \left(2/5-\eta\right)n$ by the assumption, this implies $\card{A_1}>(1/5+38\sqrt{\eta}-\eta)n$.  
We shall divide the proof into two cases, depending on whether  $d(A_1)=1$ or $d(A_1)>1$.
	
\noindent {\bf Case 1:}	$d(A_1)>1$.
	
We must have $d(A_1)\le 2$, since otherwise $\card{A_1} \le n/6+1<(1/5+38\sqrt{\eta}-\eta)n$, a contradiction. Thus $d(A_1)=2$, that is, either $A_1 \subseteq E$ or $A_1\subseteq O$. In either case, Lemma \ref{lem:bootstrap} (i) shows that $A_1-A_1$ contains all the even numbers between $0$ and $4\card{A_1}-n/2 \ge 3n/10$, giving $m_e \ge 3n/10$. 
	
We first consider the case $A_1 \subseteq E$. As $m_e \ge 3n/10$, we have $\card{A_0\cap [n/4]} \le n/8$, giving $\card{A_0\cap (n/4,n/2]} \ge \card{A_0}-n/8$. Thus $2\cdot A_0\cap (n/2,n]$ contains at least $\card{A_0}-n/8$ even numbers in $(n/2,n]\setminus A_1$. (Note that $2\cdot A_0\cap A_1=\emptyset$ since $A$ is sum-free.) It follows that the number of even integers in $(n/2,n]$ is at least
\[
\card{A_1}+\card{A_0}-n/8 \ge \left(2/5-\eta\right)n-n/8>21n/80
\]
for $\eta$ small, which is impossible.
	
We are left with the case $A_1 \subseteq O$. Let $M_e=\max(A\cap E)$, and let $O'$ denote the set of all the odd numbers less than $M_e$ in $A$. We have already shown that $m_e\ge 3n/10$. In addition, since $A_1 \subseteq O$, we  have $n/2 \ge M_e$. As $A$ is sum-free, $O'+\{M_e\}$ is a subset of $ \{M_e+1,M_e+3,\ldots,2M_e-1\}\setminus A$, and so $A$ has at most $M_e/2-\card{O'}$ odd elements in $[M_e,2M_e]$. Moreover, $(2M_e,n]$ contains at most $(n-2M_e)/2$ odd numbers. Thus $\card{A\cap O} \le \card{O'}+(M_e/2-\card{O'})+(n-2M_e)/2=(n-M_e)/2$, and hence $\card{A\cap E}=\card{A}-\card{A\cap O}\ge (2/5-\eta)n-(n-M_e)/2 \ge M_e/2-n/8$ when $\eta$ is small enough. However, $\card{A\cap E} \le (M_e-m_e)/2+1 \le M_e/2-3n/20+1$ since $m_e\ge 3n/20$. Using these bounds yields $3n/20-1 \le n/8$, which is impossible for $n$ large.
	
\noindent {\bf Case 2:}	$d(A_1)=1$.

Due to Lemma \ref{lem:bootstrap} (i), we have $A_1-A_1 \supseteq [2\card{A_1}-\el(A_1)-1]$. As $A$ is sum-free, it follows that $2\card{A_1}-\el(A_1)-1<\min(A) \le 35\sqrt{\eta}n$, giving $\el(A_1) \ge 2\card{A_1}-35\sqrt{\eta}n-1$.
Since $d(A_1)=1$, Lemma \ref{lem:Lev-Smeliansky} implies
\[
\card{(A_1-A_1)_{+}} \ge \min\left\{\tfrac12(\card{A_1}+\el(A_1)-2),\tfrac32\card{A_1}-2\right\} \ge \tfrac32\card{A_1}-18\sqrt{\eta}n
\]
for $1/n\le \eta \le c$. Moreover, since $A_0$ and $(A_1-A_1)_{+}$ are disjoint subsets of $[n/2]$, we see that $n/2 \ge \card{A_0}+\card{(A_1-A_1)_{+}}$. From these estimates we obtain
\[
n/2 \ge \card{A_0}+3\card{A_1}/2-18\sqrt{\eta}n \ge (2/5-\eta)n+\card{A_1}/2-18\sqrt{\eta}n.
\] 
So $\card{A_1} \le (1/5+2\eta+36\sqrt{\eta})n < (1/5+38\sqrt{\eta}-\eta)n$ for small $\eta$, a contradiction.
\end{proof}

In the rest of the proof, we use the $\ka$-notation for constants tending to zero as their parameters do so, that is, $\ka(\eta) \rightarrow 0$ whenever $\eta \rightarrow 0$.

We shall infer from Claim \ref{claim:small-balanced} that $\card{A_0+A_0}/\card{A_0}$ is small, and then rely on the inverse theorems (Lemmas \ref{thm:Lev-Smeliansky} and \ref{thm:Jin}) to get detailed structural information on $A_0$.

\begin{claim}\label{claim:small-structure-I}
The set $A_0$ has the following properties:
\begin{itemize}
\item[(i)] $d(A_0)=1$;
\item[(ii)] $A_0 \subseteq P_1\cup P_2$ for some arithmetic progressions $P_1$ and $P_2$ with the same step and $\card{P_1}+\card{P_2} \le (1+\ka(\eta))\card{A_0}$.
\end{itemize}
\end{claim}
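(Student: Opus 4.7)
My plan handles the two parts separately. For part (i), I rule out $d(A_0)>1$. If $d(A_0)\ge 3$, then $\card{A_0}\le n/6+1$, contradicting Claim~\ref{claim:small-balanced} for small $\eta$. If $d(A_0)=2$, then either $A_0\subseteq E$ or $A_0\subseteq O$. In the former sub-case I write $A_0=2\cdot B$ with $B\subseteq[n/4]$ of density at least $4/5-o(1)$; Lemma~\ref{lem:bootstrap}(i) applied to $B$ shows that $(A_0-A_0)_{+}$ contains every even integer in $[2,(3/10-o(1))n]$, so by sum-freeness of $A$ no even element of $A$ lies in that range, contradicting $\min(A)=\min(A_0)\in A_0\subseteq E$ being at most $35\sqrt{\eta}n$. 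The sub-case $A_0\subseteq O$ is more delicate because $A\cap E$ is automatically confined to $(n/2,n]$; here I plan to combine the density lower bound for $\card{A_0+A_0}$ (via Lemma~\ref{lem:bootstrap}(ii) applied to $(A_0+1)/2$) with the sum-free disjointness of $A\cap E$ from both $A_0+A_0$ and $A_0+(A_1\cap O)$ to sharply upper bound $\card{A\cap E}$, eventually contradicting $\card{A}\ge(2/5-\eta)n$.

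For part (ii), sum-freeness gives $\card{A_0+A_0}\le n-\card{A}\le(3/5+\eta)n$; combined with Claim~\ref{claim:small-balanced}, writing $\card{A_0+A_0}=3\card{A_0}-3+r$ yields $r\le(\eta+114\sqrt{\eta})n+3$, which is $o(\card{A_0})$. The engine of the argument is the observation that, since $A$ is sum-free, $A_0$ and $(A_0-A_0)_{+}$ are disjoint subsets of $[1,\min(A_0)+\el(A_0)-1]$, so
\[
\card{A_0}+\card{(A_0-A_0)_{+}}\le \min(A_0)+\el(A_0)-1.
\]
Combined with Lemma~\ref{lem:Lev-Smeliansky} (valid once $d(A_0)=1$), this yields either $\el(A_0)\ge 3\card{A_0}-2\min(A_0)$ (in the regime $\el(A_0)<2\card{A_0}-2$) or $\el(A_0)\ge \tfrac{5}{2}\card{A_0}-\min(A_0)-1$ (otherwise).

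I then case-split on the sign of $r$. If $r<0$, Lev--Smeliansky (Lemma~\ref{thm:Lev-Smeliansky}) places $A_0$ in an interval of length $\el(A_0)\le 2\card{A_0}-3$; combined with the identity above this forces $\card{A_0}\le 2\min(A_0)-3=O(\sqrt{\eta}n)$, contradicting Claim~\ref{claim:small-balanced} once $\eta$ is small enough. So $r\ge 0$ and Jin's theorem (Lemma~\ref{thm:Jin}) applies. Case (ii) of Jin immediately gives the desired $A_0\subseteq P_1\cup P_2$ with $\card{P_1}+\card{P_2}\le\card{A_0}+r=(1+\ka(\eta))\card{A_0}$. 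The main work is eliminating Jin's case (i), which says $A_0$ lies in a single AP of length $L'\le 2\card{A_0}-1+2r$; with $d(A_0)=1$ this AP is an interval, so $\el(A_0)\le L'$, and substituting into the identity above again yields $\card{A_0}\le 2\min(A_0)+O(r)=O(\sqrt{\eta}n)$, contradicting Claim~\ref{claim:small-balanced} whenever $\eta$ lies below an absolute constant.

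The hardest step will be the $A_0\subseteq O$ sub-case of (i), where the clean $\min(A)$-based contradiction available in the even sub-case is unavailable and a more intricate sum-freeness argument relating $A_0$ and $A_1$ is required.
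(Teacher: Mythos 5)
Your treatment of part (ii) is correct and takes a genuinely different route from the paper. You eliminate the single–arithmetic–progression case of Lemma~\ref{thm:Jin} (and the analogous output of Lemma~\ref{thm:Lev-Smeliansky} when $r<0$) by combining the disjointness of $A_0$ and $(A_0-A_0)_+$ inside $[1,\min(A_0)+\el(A_0)-1]$ with the lower bound of Lemma~\ref{lem:Lev-Smeliansky} on $\card{(A_0-A_0)_+}$, forcing $\card{A_0}\le 2\min(A_0)+O(r)=O(\sqrt{\eta}n)$, a contradiction with Claim~\ref{claim:small-balanced}. The paper instead applies Theorem~\ref{thm:Freiman-large-sum-free} to $A_0$ viewed as a dense sum-free subset of a short interval, concluding $A_0\subseteq O$ and contradicting part (i). Both are valid; yours avoids invoking Freiman's density theorem at the cost of a slightly more computational case analysis, and it makes the role of $\min(A_0)$ being $o(n)$ more transparent. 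Your $A_0\subseteq E$ sub-case of (i) is also fine (the paper's version is shorter: $\{a/2:a\in A_0\}$ is a subset of $[n/4]$ of size $>n/8+1$, hence not sum-free).

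The genuine gap is the $A_0\subseteq O$ sub-case of (i). Your plan upper-bounds $\card{A\cap E}$ via sum-free disjointness from $A_0+A_0$ (and $A_0+(A_1\cap O)$), but bounding $\card{A\cap E}$ alone cannot reach a contradiction with $\card{A}\ge(2/5-\eta)n$: there are roughly $n/2$ odd numbers in $[n]$, so even $\card{A\cap E}=0$ is consistent with $\card{A}\approx n/2>(2/5)n$. You must also control $\card{A\cap O}=\card{A_0}+\card{A_1\cap O}$, and you give no mechanism for doing so. Nor does the natural attempt close easily: e.g.\ using $\card{(A_1\cap O)-A_0}\ge\card{A_1\cap O}+\card{A_0}-1$ together with disjointness from $A\cap E$ inside $E\cap[n]$ only yields $\card{A}\le n/2+1$, which is far too weak, and $(A_1\cap O)-A_0$ need not be disjoint from $(A_0-A_0)_+$ or $A_0+A_0$, so simply adding the three lower bounds is unjustified. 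The paper's proof handles this cleanly by splitting at $m_e=\min(A\cap E)>n/2$: for $A'=A\cap[m_e-1]$ (all odd), the sets $A'$ and $m_e-A'$ are disjoint odd subsets of $[m_e-1]$, giving $\card{A'}\le m_e/4$; for $A''=A\cap[m_e,n]$, Lemma~\ref{lem:bootstrap}(i) plus sum-freeness and $\min(A)\le35\sqrt\eta n$ gives $\card{A''}\le (n-m_e)/2+O(\sqrt\eta n)$; summing yields $\card{A}\le n/2-m_e/4+O(\sqrt\eta n)<3n/8+O(\sqrt\eta n)$, the desired contradiction. You should replace your sketch with an argument that bounds the odd part as well; the split-at-$m_e$ device is the missing ingredient.
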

\begin{proof}
(i) Toward a contradiction, suppose $d(A_0)>1$. We must have $d(A_0)<3$ because $\card{A_0}\ge (1/5-\ka(\eta))n>n/6+1$ by Claim \ref{claim:small-balanced}. Hence $A_0\subseteq E$ or $A_0\subseteq O$. If $A_0 \subseteq E$, then $\{a/2:a\in A_0\}$ is not sum-free since it is a set in $[n/4]$ of size $\card{A_0}>n/8+1$, contradicting our assumption that $A_0$ is sum-free. Now suppose $A_0\subseteq O$. Then $m_e>n/2$. To bound $\card{A}$, we partition $A=A'\dcup A''$, in which $A'=A\cap [m_e-1]$ and $A''=A\cap [m_e,n]$. Since $m_e=\min(A\cap E)$ and $A$ is sum-free, $A'$ and $m_e-A'$ are disjoint sets of odd numbers in $[m_e-1]$, giving $\card{A'} \le m_e/4$. To deal with $A''$, we note that $A''-A''$ contains $[2\card{A''}-(n-m_e)-2]$ by Lemma \ref{lem:bootstrap} (i). As $A$ is sum-free, it follows that $2\card{A''}-(n-m_e)-2 \le \min(A)=\ka(\eta)n$, resulting in $\card{A''} \le (n-m_e)/2+\ka(\eta)n$. Therefore, we have $\card{A}=\card{A'}+\card{A''} \le n/2-m_e/4+18\sqrt{\eta}n \le (1/4+\ka(\eta))n$, contradicting the assumption that $\card{A} \ge (2/5-\eta)n$.

\noindent (ii) Since $A$ is sum-free, $2A_0$ and $A$ are disjoint subsets of $[n]$. Hence 
\[
\card{2A_0} \le n-\card{A} \le (3/5+\eta)n \le (3+\ka(\eta))\card{A_0}
\] 
as $\card{A}\ge (2/5-\eta)n$ by the assumption, and $\card{A_0}\ge (1/5-\ka(\eta))n$ due to Claim \ref{claim:small-balanced}. By applying Lemma \ref{thm:Lev-Smeliansky} when $\card{2A_0}\le 3\card{A_0}-4$ and Lemma \ref{thm:Jin} in the case $\card{2A_0} \ge 3\card{A_0}-3$, we deduce that $A_0$ satisfies one of the following conditions:
\begin{itemize}
	\item[(a)] $A_0$ is a subset of an arithmetic progression of length $(2+\ka(\eta))\card{A_0}$;
	\item[(b)] $A_0 \subseteq P_1\cup P_2$ for some arithmetic progressions $P_1$ and $P_2$ with the same step and $\card{P_1}+\card{P_2}\le (1+\ka(\eta))\card{A_0}$.
\end{itemize}
To prove property (ii), it thus suffices to show that case (a) is impossible. In this case $A_0$ is located in an interval of length $(2+\ka(\eta))\card{A_0}$, as $d(A_0)=1$ by property (i). Since $\min(A_0)=\ka(\eta)n$ by the assumption and $\card{A_0}\ge (1/5-\ka(\eta))n$ by Claim \ref{claim:small-balanced}, it follows that $\min(A_0)=\ka(\eta)\card{A_0}$ and $A_0 \subseteq [(2+\ka(\eta))\card{A_0}]$. By Theorem \ref{thm:Freiman-large-sum-free}, we thus have $A_0 \subseteq O$, which contradicts property (i).
\end{proof}

We shall use the previous claims to obtain the following characterisation of $A_0$.

\begin{claim}\label{claim:small-a0-structure-II}
Either $A_0 \subseteq F_{1,4}$ or $A_0\subseteq F_{2,3}$.
\end{claim}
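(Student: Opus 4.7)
I will write $P_1=\{a+jd : 0\le j<\el_a\}$ and $P_2=\{b+jd : 0\le j<\el_b\}$, where $d$ is the common step of the two APs from Claim \ref{claim:small-structure-I}(ii). After truncating each $P_i$ to the smallest AP containing $A_0\cap P_i$, I may assume $a=\min(A_0)$, $b=\min(A_0\cap P_2)$, and $P_1\cup P_2\subseteq[n/2]$, and set $r_1\equiv a\Mod{d}$, $r_2\equiv b\Mod{d}$. Since $d(A_0)=1$ by Claim \ref{claim:small-structure-I}(i), I will have $r_1\ne r_2$ whenever $d\ge 2$. As two APs of step $d$ inside $[n/2]$ satisfy $\el_a+\el_b\le n/d+2$, and $\el_a+\el_b\ge|A_0|\ge(1/5-38\sqrt{\eta})n$ by Claim \ref{claim:small-balanced}, I obtain $d\le 5$ for $\eta$ sufficiently small; so $d\in\{1,2,3,4,5\}$.

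\textbf{Sub-step 3.1: residue sum-freeness.} I plan to show that $\{r_1,r_2\}$ is a sum-free subset of $\bZ/d\bZ$. The sumset $A_0+A_0$ is contained in the three APs $2P_1$, $P_1+P_2$, $2P_2$ of common step $d$ and residues $2r_1,r_1+r_2,2r_2\Mod{d}$. Since $|P_1|+|P_2|\le(1+\ka(\eta))|A_0|$ by Claim \ref{claim:small-structure-I}(ii), each $A_0\cap P_i$ has density $\ge 1-\ka(\eta)$ in $P_i$; transferring through $y\mapsto(y-\min P_i)/d$ and applying Lemma \ref{lem:bootstrap}(ii), I obtain that $2(A_0\cap P_1)$, $(A_0\cap P_1)+(A_0\cap P_2)$ and $2(A_0\cap P_2)$ each miss at most $O(\ka(\eta)n)$ elements of their ambient AP. The sum-free hypothesis on $A$ gives $A_0\cap(A_0+A_0)=\emptyset$. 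Because $a\le 35\sqrt{\eta}n$ is small and $a',b'\le n/2$, I will check that whenever $2r_i\equiv r_j\Mod{d}$ or $r_i+r_j\equiv r_k\Mod{d}$ the two relevant APs overlap inside $[n/2]$ on an interval of length $\Omega(n)$, which exceeds the $O(\ka(\eta)n)$ slack; any such coincidence would then contribute $\Omega(n)$ elements to $A_0\cap(A_0+A_0)$, a contradiction. Thus $\{r_1,r_2\}\cap\{2r_1,r_1+r_2,2r_2\}=\emptyset\Mod{d}$.

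\textbf{Sub-step 3.2: case analysis on $d$.} The values $d=2,3$ are ruled out because $\bZ/d\bZ$ has no two-element sum-free subset. For $d=4$, the only sum-free pair is $\{1,3\}$, which would force $A_0\subseteq O$ and hence $d(A_0)\ge 2$, contradicting Claim \ref{claim:small-structure-I}(i). The case $d=1$ has a vacuous residue condition, so I will handle it directly: the sum-free conditions $A_0\cap 2(A_0\cap P_i)=\emptyset$ force $2a>a'$ and $2b>b'$, giving $\el_a\le a\le 35\sqrt{\eta}n$ and hence $\el_b\ge(1/5-\ka(\eta))n$, whence $b\ge\el_b\ge(1/5-\ka(\eta))n$; then $(P_1+P_2)\cap P_2\supseteq[a+b,b']$ has length $\el_b-a=\Omega(n)$, is densely covered by $(A_0\cap P_1)+(A_0\cap P_2)$, and meets $A_0\cap P_2$ in $\Omega(n)$ elements, violating sum-freeness. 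This leaves $d=5$, for which the only sum-free pairs in $\bZ/5\bZ$ are $\{1,4\}$ and $\{2,3\}$, yielding the desired $A_0\subseteq F_{1,4}$ or $A_0\subseteq F_{2,3}$.

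\textbf{Main obstacle.} The crux will be sub-step 3.1: for each of the (at most six) potential residue coincidences I need to confirm that the integer range overlap of the two relevant APs inside $[n/2]$ is much longer than the $O(\ka(\eta)n)$ slack from Lemma \ref{lem:bootstrap}(ii). The smallness of $\min(A_0)$, together with the near-optimal packing of $A_0$ into $P_1\cup P_2$ and the constraint $P_i\subseteq[n/2]$, is precisely what guarantees these overlaps are of linear size; edge effects near the ends of the progressions will also require care.
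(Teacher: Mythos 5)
Your overall plan—reduce to two APs $P_1,P_2$ of common step $d\le 5$ via Claims \ref{claim:small-balanced}/\ref{claim:small-structure-I}, then show the residue pair $\{r_1,r_2\}$ must be sum-free mod~$d$—is the paper's plan. For $d=1$ you use a sumset overlap argument rather than the paper's one-line difference-set argument ($A_0-A_0\supseteq[(1/2-\ka(\eta))|A_0|]\ni\min(A_0)$), which works but is more roundabout. For $d\in\{3,4,5\}$ your residue-sum-freeness plan corresponds to the paper's constraints $(C1)$--$(C3)$ and Cases 3.1--3.3; the reason those cases close is exactly that the width constraint $(\be_a-\al_a)+(\be_b-\al_b)\ge 2d/5-\eps>1$ forces \emph{both} APs to have width $\ge 0.19$, so the ``non-overlap'' alternatives in $(C3)$ are then inconsistent with $(C1)$--$(C2)$. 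So for $d\ge 3$ your approach is essentially correct, though you gloss over the fact that this is not a pointwise check per coincidence: ruling out the non-overlap alternative requires combining the inequalities from \emph{both} coincidences with the widths (this is precisely what the paper's algebra in Cases 3.1--3.3 does).

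The genuine gap is $d=2$. You dismiss it with ``$\bZ/2\bZ$ has no two-element sum-free subset,'' but that only bites if sub-step 3.1 delivers residue sum-freeness, and your argument there (``the two relevant APs overlap inside $[n/2]$ on an interval of length $\Omega(n)$'') fails for $d=2$. The reason, which the paper flags in a footnote, is that for $d=2$ the width lower bound is only $(\be_a-\al_a)+(\be_b-\al_b)\ge 4/5-\eps<1$, so one of the two APs can have $o(1)$ width. In that regime a coincidence $2r_i\equiv r_j$ involving the short AP produces overlap comparable to its (tiny) width, not $\Omega(n)$, and sum-freeness only yields a soft inequality like $M_a\le 2m_a+O(\ka(\eta)n)$ rather than an outright contradiction. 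The paper therefore treats $d=2$ by a separate counting argument: it derives three constraints $C1$--$C3$ from such near-non-overlap conditions and then shows they force $|A_0|\le (\tfrac{31}{160}+\ka(\eta))n<(1/5-\ka(\eta))n$, contradicting Claim \ref{claim:small-balanced}. A width-only argument can be made to work for $d=2$ as well, but it must aggregate the constraints from all three residue coincidences simultaneously (depending on which AP contains $\min(A_0)$), and is not a per-coincidence $\Omega(n)$ overlap claim. As written, your sub-step 3.1 silently assumes a balance between the two APs that is not available when $d=2$, and the proof does not close without a separate treatment of this case.

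Minor: for $d=4$ you invoke the pair $\{1,3\}$ as "the only sum-free pair," but it violates the condition $r_1\not\equiv r_2\pmod 2$ (needed since $d(A_0)=1$); you do recover by noting it forces $A_0\subseteq O$, so the conclusion is right, but the paper simply observes there is no sum-free pair in $\bZ/4\bZ$ with one odd and one even residue.
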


Before we proceed with the proof of Claim \ref{claim:small-a0-structure-II}, we show how it implies the lemma. From Claim \ref{claim:small-a0-structure-II} we have $A_0 \subseteq F_{1,4}$ or $A_0\subseteq F_{2,3}$.
We shall show that if $A_0 \subseteq F_{1,4}$ then $A\subseteq F_{1,4}$. Conversely, suppose that there exists $a \in A\setminus F_{1,4}$. Since $a \not \equiv 1,4 \Mod{5}$, we can find $i,j \in \{1,4\}$ such that $a\equiv i+j \Mod 5$. Now Claim \ref{claim:small-balanced} tells us that $\card{A_0} \ge (1/5-\ka(\eta))n$. Let $X= A_0\cap (5\cdot \bZ+1)$ and $Y=A_0\cap (5\cdot \bZ+4)$. From Lemma \ref{lem:bootstrap} (ii), we learn that $2X$, $X+Y$ and $2Y$ contain all the elements in $[\ka(\eta)n,(1-\ka(\eta))n]$ of $5\cdot\bZ+2$, $5\cdot \bZ$ and $5\cdot\bZ+3$, respectively. As $2A\neq A$ and $\card{A}\ge (2/5-\eta)n$, this shows that $a \ge (1-\ka(\eta))n$, and that $A$ contains all but at most $k(\eta)n$ elements of $[n]\cap (\{1,4\}+5\cdot \bZ)$.  Hence both $A$ and $a-A$ contain all but at most $\ka(\eta)n$ elements of $[n]\cap (5\cdot\bZ+ j)$, and so $A\cap (a-A)\ne \emptyset$, 
contradicting the assumption that $A\cap (A-A)=\emptyset$. 
In much the same way, the condition $A_0\subseteq F_{2,3}$ would force $A\subseteq F_{2,3}$.
\end{proof}

We close this section by deducing Claim \ref{claim:small-a0-structure-II} from Claims \ref{claim:small-balanced} and \ref{claim:small-structure-I}.

\begin{proof}[Proof of Claim \ref{claim:small-a0-structure-II}]
Finally we come to what is, in some sense, the trickiest part of our proof. Due to Claim \ref{claim:small-structure-I} (ii), there exist two arithmetic progressions $I_a=\{a,a+d,\ldots,a+(\el_a-1)d\}$ and $I_b=\{b,b+d,\ldots,b+(\el_b-1)d\}$ in $[n/2]$ such that $A_0\subseteq I_a\cup I_b$ and $\card{I_a}+\card{I_b} \le (1+\ka(\eta))\card{A_0}$. In particular,
	\begin{equation}\label{eq:small-APs}
	\card{A_0\cap I_u} \ge \card{I_u}-\ka(\eta)n \ \text{for every} \ u \in \{a,b\}.
	\end{equation}
	Clearly $\card{A_0} \le \card{I_a}+\card{I_b} \le n/d+2$. Combined with the bound $\card{A_0} \ge (1/5-\ka(\eta))n$ from Claim \ref{claim:small-balanced} we get $d\le 5$. We distinguish three cases $d=1, d=2$ and $d\in \{3,4,5\}$.
	
	\noindent {\bf Case 1:}	$d=1$.
	
	In this case both $I_a$ and $I_b$ are intervals. Without loss of generality we can assume that $\card{I_a} \ge \card{I_b}$. Since $A_0\subseteq I_a\cup I_b$, it follows that $\card{I_a}\ge \card{A_0}/2$, and so from \eqref{eq:small-APs} and Lemma \ref{lem:bootstrap} (i) we have $A_0-A_0 \supseteq (A_0\cap I_a)-(A_0\cap I_a)\supseteq [(1/2-\ka(\eta))\card{A_0}]$. But $\min(A_0)=\ka(\eta)\card{A_0}$ by the assumption, resulting in $ A_0\cap (A_0-A_0) \ne \emptyset$, a contradiction.
	
	\noindent {\bf Case 2:} $d=2$.
	
Since $d(A_0)=1$ by Claim \ref{claim:small-structure-I} (i), we must have $A_0\cap I_a \ne \emptyset, A_0\cap I_b \ne \emptyset$ and $a\ne b \Mod{2}$. So we can assume that $a \equiv 0 \Mod{2}$ and $b \equiv 1 \Mod{2}$. For $u\in \{a,b\}$, denote by $m_u$ the smallest element of $A\cap I_u$. We have $\min\{m_a,m_b\}=\min(A)=\ka(\eta)n$ by the assumption. We thus have $m_a \le n/30$, or $m_a>n/30$ and $m_b=\ka(\eta)n$.
	
	We first deal with the case $m_a \le n/30$. We claim that $\card{I_u} \le n/20$ for all $u\in \{a,b\}$. If this is not true then $\card{I_u} \ge n/20$ for some $u\in \{a,b\}$. From \eqref{eq:small-APs} and Lemma \ref{lem:bootstrap} (i), it follows that $(A_0\cap I_u)-(A_0\cap I_u)$ contains all the even numbers between $1$ and $(1-\ka(\eta))n/20$. Since $m_a \le n/30$, this leads to $m_a\in A_0-A_0$, a contradiction. We thus have $\card{A_0}\le \card{I_a}+\card{I_b} \le n/10$, contradicting Claim \ref{claim:small-balanced}.
	
	We now consider the case $m_a>n/30$ and $m_b=\ka(\eta)n$. For $u\in \{a,b\}$, let $M_u$ be the largest element of $I_u$. As $I_a\subseteq [n/2]$, we have the constraint $C1:M_a \le n/2$. We next show that $m_a$ and $M_a$ satisfy  $C2:M_a \le 2 m_a+n/20$. Indeed if $M_a \ge 2 m_a+n/20$, then we can deduce from \eqref{eq:small-APs} that $A_0\cap I_a$ and $m_a+(A_0\cap I_a)$ would have at least $(1-\ka(\eta))n/40$ even elements in common, which contradicts the assumption that $A$ is sum-free. We shall need one more constraint $C3:m_a\ge (2-\ka(\eta))M_b$. Indeed as $A_0\cap I_a$ is a sum-free subset of even integers in $[n/2]$, we find $\card{A_0\cap I_a} \le n/4+1$. Together with the estimate $\card{A_0}\ge (1/5-\ka(\eta))n$ from Claim \ref{claim:small-balanced}, we see that $\card{A_0\cap I_b} \ge (1/20-\ka(\eta))n$. From \eqref{eq:small-APs} and Lemma \ref{lem:bootstrap} (ii), it follows that $2(A_0\cap I_b)$ contains all the even numbers of $[\ka(\eta)n,(2-\ka(\eta))M_b]$. As $A_0$ is sum-free and $m_a>n/30$, this implies $m_a\ge (2-\ka(\eta))M_b$, as claimed. Under the constraints $C1$, $C2$ and $C3$, one has
	\begin{align*}
	\card{A_0} =\card{A_0\cap I_a}+\card{A_0\cap I_b}& \le \tfrac12(M_a-m_a+2)+\tfrac12 (M_b+1)\\
	&=\tfrac38M_a+\tfrac18(M_a-2m_a)+\tfrac14(2M_b-m_a)+\tfrac32 \le (\tfrac{31}{160}+\ka(\eta))n,	
	\end{align*}
	which contradicts the lower bound $\card{A_0}\ge (1/5-\ka(\eta))n$ from Claim \ref{claim:small-balanced}.
	
	\noindent {\bf Case 3:}	$d\in \{3,4,5\}$.
	
For each $u\in \{a,b\}$, let $\al_u$ and $\be_u$ be two real numbers such that $\min(I_u)=\al_u n/2$ and $\max(I_u)=\be_u n/2$. Set $\eps=1/1000$. We first show that for $\eta>0$ sufficiently small, the parameters $\al_a,\beta_a,\al_b$ and $\be_b$ satisfy the following constraints:
	\begin{itemize}
		\item[$(C1)$] $0 \le \al_u \le \be_u \le 1$ for each $u\in \{a,b\}$,
		\item[$(C2)$] $(\be_a-\al_a)+(\be_b-\al_b) \ge 2d/5-\eps$,
		\item[$(C3)$] If $u,v,w \in \{a,b\}$ and $u+v \equiv w \Mod{d}$, then $\be_u+\be_v \le \al_w+\eps$ or $\be_w \le \al_u+\al_v+\eps$.   
	\end{itemize}
	Indeed, as $I_a$ and $I_b$ are subsets of $[n/2]$ the first constraint follows. The second holds since $\card{I_a}+\card{I_b} \ge \card{A_0} \ge (1/5-\ka(\eta))n$ by Claim \ref{claim:small-balanced}. For the third, note first that from $(C1)$ and $(C2)$ one has 
	\begin{equation}\label{eq:width}
	\min\{\be_a-\al_a,\be_b-\al_b\}\ge 2d/5-\eps-1\ge 0.19.\footnote{In contrast, one may have $\min\{\be_a,\be_b\}=o(1)$ when $d=2$. This subtle difference between the two cases $d=2$ and $d\ge 3$ has forced us to treat them separately.}
	\end{equation}
	From \eqref{eq:small-APs} and \eqref{eq:width} we find $\min\{|A_0\cap I_a|,|A_0\cap I_b|\}\ge 0.19n/(2d)-\kappa(\eta)n>0.01n$. In particular, $A_0\cap I_u$ and $A_0\cap I_v$ are non-empty, and so \eqref{Cauchy-Davenport} implies
	\begin{align*}
	\card{(A_0\cap I_u)+(A_0\cap I_v)} &\ge \card{A_0\cap I_u}+\card{A_0\cap I_v}-1\\
	(\text{by \eqref{eq:small-APs}}) \quad & \ge \card{I_u}+\card{I_v}-1-\kappa(\eta)n=\card{I_u+I_v}-\kappa(\eta)n,	
    \end{align*}
    where the last equality holds because $I_u$ and $I_v$ are two arithmetic progressions with the same step. It follows that $(A_0\cap I_u)+(A_0\cap I_v)$ contains all but at most $\kappa(\eta)n$ elements of the arithmetic progression $\{x|x\in [(\al_u+\al_v)\frac{n}{2},(\be_u+\be_v)\frac{n}{2}], x\equiv u+v \Mod{d}\}$. On the other hand, \eqref{eq:small-APs} tells us that $A_0\cap I_w$ contains all but at most $\kappa(\eta)n$ members of the arithmetic progression $\{x|x\in [\al_w n/2,\be_w n/2], x\equiv w \equiv u+v \Mod{d}\}$. Therefore we must have $\be_u+\be_v \le \al_w+\eps$ or $\be_w \le \al_u+\al_v+\eps$, as otherwise $(A_0\cap I_u)+(A_0\cap I_v)$ and $A_0\cap I_w$ would have at least $\min\{\eps \cdot \frac{n}{2d}-\kappa(\eta)n,0.01n\}>0$ elements in common, which contradicts the assumption that $A_0$ is sum-free.  
	
	In what follows we shall exploit the constraints $(C1)$--$(C3)$ to show that the set $\{a,b\}$ is sum-free modulo $d$.
	Note that since $d(A_0)=1$, we must have $a\ne b \Mod{d'}$ for every divisor $d'$ of $d$ with $d'>1$. Due to symmetry between $a$ and $b$, we thus only need to take care of the following three cases.
	
	{\bf Case 3.1:} $d=3, a\equiv 1 \Mod{3}$ and $b\equiv 2 \Mod{3}$.
	Using $(C3)$ with $u=v=a$ and $w=b$, we deduce that $2\be_a \le \al_b+\eps$ or $\be_b \le 2\al_a+\eps$. If $2\be_a \le \al_b+\eps$, then
	\[
	(\be_a-\al_a)+(\be_b-\al_b)=\be_b+(2\be_a-\al_b)-(\be_a+\al_a) \le 1+\eps <2d/5-\eps
	\]
	since $\be_b \le 1$ and $\al_a,\be_a \ge 0$ by $(C1)$, which contradicts $(C2)$. We thus have $\be_b \le 2\al_a+\eps$. By  symmetry we also get $\be_a \le 2\al_b+\eps$. Hence 
	\[
	(\be_a-\al_a)+(\be_b-\al_b)=\tfrac12(\be_a+\be_b)+\tfrac12(\be_a-2\al_b)+\tfrac12(\be_b-2\al_a) \le 1+\eps
	\]
	since $\be_a,\be_b \le 1$ by $(C1)$. But this bound is inconsistent with $(C2)$.
	
	{\bf Case 3.2:} $a\equiv 0 \Mod{d}$. Property $(C3)$ tells us that $\be_a+\be_b \le \al_b+\eps$ or $\be_b \le \al_a+\al_b+\eps$. If the former condition occurs, then from $(C1)$ and $(C2)$ we get 
	\[
	1+\eps\ge \al_b+\eps\ge \be_a+\be_b\ge (\be_a-\al_a)+(\be_b-\al_b) \ge 2d/5-\eps,
	\]
	which is impossible. Hence $\be_b \le \al_a+\al_b+\eps$. Combined with the constraint $\be_a \le 1$ from $(C1)$, we again get a contradiction  
	\[
	(\be_a-\al_a)+(\be_b-\al_b)=\be_a+(\be_b-\al_a-\al_b) \le 1+\eps<2d/5-\eps.
	\]
	{\bf Case 3.3:} $d\in \{4,5\}$, $a,b\not \equiv 0 \Mod{d}$, and $a\not \equiv b \Mod{d'}$ for every divisor $d'$ of $d$ with $d'>1$. We begin by reducing to the case that $\{a,b\}$ is sum-free modulo $d$. Indeed consider the relation $b\equiv 2a \Mod{d}$. As in the proof of Case 3.1, this would imply $\be_b \le 2\al_a+\eps$. Thus
	\[
	(\be_a-\al_a)+(\be_b-\al_b)=(\be_a+\tfrac12\be_b)+\tfrac12(\be_b-2\al_a)-\al_b \le 3/2+\eps/2,
	\]
	since $\be_a,\be_b \le 1$ and $\al_b \ge 0$ by $(C1)$. But once again this contradicts $(C2)$. The case $a\equiv 2b \Mod{d}$ follows by symmetry.
	
	We have shown that the set $\{a,b\}$ is sum-free modulo $d$. Combined this with the condition that $a\not \equiv b\Mod{d'}$ for every divisor $d'$ of $d$ with $d'>1$, we conclude that, up to a permutation of $a$ and $b$, either $a\equiv 1 \Mod{5}$ and $b\equiv 4\Mod{5}$ or $a\equiv 2 \Mod{5}$ and $b\equiv 3\Mod{5}$.     
\end{proof}

\section{The number of $2$-wise sum-free sets}
\label{sec:counting}  

In this section, we prove Theorem \ref{thm:counting}.

\subsection{Proof overview}\label{subsec:counting-overview}

Recently the method of {\bf containers} has emerged as a powerful tool for tackling various problems in combinatorics. Roughly speaking this method states that the independent sets in many `natural' hypergraphs exhibit a certain kind of `clustering', which allows one to count them one cluster at a time. Balogh, Morris and Samotij \cite{BMS} and Saxton and Thomason \cite{ST}, proved general container theorems for hypergraphs $\mathcal{H}$ whose edges are fairly `evenly distributed' over the vertices of $\mathcal{H}$.

In the proof of Theorem \ref{thm:counting}, we shall apply a special case of a container result of Hancock, Staden and Treglown \cite[Theorem 4.7]{HST17}. We remark that their proof uses the theorems of Balogh, Morris and Samotij \cite{BMS}, and Saxton and Thomason \cite{ST}.

\begin{lemma}[Hancock--Staden--Treglown]\label{lem:containers}
There exists a collection $\cC$ of subsets of $[n]^2$ with the following three properties:
\begin{itemize}
	\item[(i)] If $(A_1,A_2)$ is a pair of disjoint sum-free subsets of $[n]$, then there exists a pair $(C_1,C_2)\in \cC$ such that $(A_1,A_2)\subseteq (C_1,C_2)$;
	\item[(ii)] $\card{\cC}=2^{o(n)}$;
	\item[(iii)] For any $(C_1,C_2) \in \cC$, each $C_i$ contains at most $o(n^2)$ Schur triples.
\end{itemize} 	
\end{lemma}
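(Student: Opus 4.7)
The plan is to encode pairs of sum-free sets as independent sets of an auxiliary $3$-uniform hypergraph and to invoke the hypergraph container theorem of Balogh--Morris--Samotij and of Saxton--Thomason as a black box. The disjointness of $(A_1,A_2)$ hypothesised in (i) plays no role in the construction — it suffices to build containers for arbitrary pairs of sum-free subsets of $[n]$, and (i) then holds a fortiori.

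Define $\mathcal{H}$ to be the $3$-uniform hypergraph on vertex set $V=[n]\times\{1,2\}$ whose edges are the triples $\{(x,i),(y,i),(z,i)\}$ with $x+y=z$ and $i\in\{1,2\}$. A pair $(A_1,A_2)$ of sum-free subsets of $[n]$ corresponds to the set $A_1\times\{1\}\cup A_2\times\{2\}\subseteq V$, which is $\mathcal{H}$-independent exactly because each $A_i$ is sum-free. The hypergraph has $|V|=2n$ vertices and $e(\mathcal{H})=\Theta(n^2)$ edges, with maximum vertex-degree $\Delta_1(\mathcal{H})=O(n)$ (each integer lies in $O(n)$ Schur triples) and maximum pair-degree $\Delta_2(\mathcal{H})=O(1)$ (each pair of distinct integers lies in at most two Schur triples). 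These are precisely the estimates that feed into the container machinery.

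Apply the container theorem to $\mathcal{H}$ with parameters $\tau=\tau(n)\to 0$ satisfying $\tau\gg n^{-1/2}\log n$ and $\eps=\eps(n)\to 0$ tending to zero arbitrarily slowly. This yields a family $\mathcal{B}$ of subsets of $V$ with $|\mathcal{B}|\le\binom{2n}{\le 2\tau n}=2^{o(n)}$ such that every $\mathcal{H}$-independent set lies in some $B\in\mathcal{B}$, and each $B$ carries at most $\eps\cdot e(\mathcal{H})=o(n^2)$ edges of $\mathcal{H}$. Setting $C_i(B):=\{x\in[n]:(x,i)\in B\}$ for $i\in\{1,2\}$ and $\cC:=\{(C_1(B),C_2(B)):B\in\mathcal{B}\}$ yields the desired collection: property (i) follows from the independence encoding, (ii) from the bound on $|\mathcal{B}|$, and (iii) because the $\mathcal{H}$-edges inside $B$ split as the disjoint union of the Schur triples of $C_1(B)$ (those with label $1$) and of $C_2(B)$ (those with label $2$), whose total count is $o(n^2)$. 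The main — and essentially only — technical point is the co-degree check demanded by the container theorem at the chosen $\tau$: one must verify that the Saxton--Thomason co-degree function satisfies $\delta(\mathcal{H},\tau)=O(\tau)$, which follows from the standard estimate $\delta(\mathcal{H},\tau)=O\!\left(\Delta_2(\mathcal{H})/(\tau\,\Delta_1(\mathcal{H}))\right)=O(1/(\tau n))$ combined with the choice $\tau\gg n^{-1/2}$. No feature of $\mathcal{H}$ beyond these vertex- and pair-degree bounds is used, so once this verification is in place, all three conclusions of the lemma drop out of the black-box container theorem.
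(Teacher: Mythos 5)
The paper does not actually prove this lemma: it is quoted verbatim from Hancock--Staden--Treglown \cite[Theorem 4.7]{HST17}, and the text only records that \emph{their} proof goes through the container theorems of Balogh--Morris--Samotij and Saxton--Thomason. So there is no in-paper argument to compare against, and your job was effectively to reconstruct the HST argument. What you have written is the natural reconstruction and is essentially correct: build the $3$-uniform ``Schur hypergraph'' on $[n]\times\{1,2\}$ with one copy of the Schur triples in each layer, observe that pairs of sum-free sets are exactly the independent sets, apply a container theorem, and push the containers back down to $[n]\times[n]$. The degree parameters you quote ($e(\mathcal{H})=\Theta(n^2)$, $\Delta_1=O(n)$, $\Delta_2=O(1)$, $\Delta_3=1$) are right, and a parameter $\tau$ with $n^{-1/2}\ll\tau\ll 1$ does give $2^{o(n)}$ containers each carrying $o(n^2)$ edges.

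Two small points worth flagging. First, the constraint $\tau\gg n^{-1/2}$ actually comes from the $\Delta_3$ (equivalently $\ell=3$) co-degree condition $\Delta_3\le c\,\tau^{2} d$, not from the $\Delta_2$ term you cite; the $\Delta_2=O(1)$ bound only forces $\tau\gg 1/n$. This does not change the conclusion, but your displayed justification for $\tau\gg n^{-1/2}$ is not the one that bites. Second, since your hypergraph is just the disjoint union of two label-preserving copies of the Schur hypergraph on $[n]$, the containerization factors: one could equivalently apply the container theorem once to the single Schur hypergraph on $[n]$, obtain a family $\mathcal{D}$ of $2^{o(n)}$ subsets of $[n]$ each with $o(n^2)$ Schur triples, and set $\mathcal{C}=\mathcal{D}\times\mathcal{D}$. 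This is slightly cleaner and makes it transparent, as you already observed, that the disjointness of $A_1$ and $A_2$ is irrelevant to the construction. Minor technicalities you glossed over (degenerate Schur triples $x+x=z$, the precise Saxton--Thomason co-degree function) are routine and do not affect soundness.
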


We refer to the elements of $\cC$ from Lemma \ref{lem:containers} as {\bf containers}.  

{\bf A counting strategy.} Our general strategy is influenced by the approach used in \cite{BLST17}, which in turn dates back to earlier s of Cameron and Erd\H{o}s \cite{Cameron-Erdos} and Green \cite{Gr05}. Given $A\in \SF_2(n)$ and a partition $A=A_1\dcup A_2$ of $A$ into two sum-free sets, we consider some container $(C_1,C_2) \in \cC$ with $(A_1,A_2) \subseteq (C_1,C_2)$. As $\cC$ is so small, the number of $A$ for which $\card{C_1\cup C_2} \le (4/5-\eta)n$ is $o(2^{4n/5})$. If, however, $\card{C_1\cup C_2}\ge (4/5-\eta)n$ then it is possible to say something about the structure of $(C_1,C_2)$, and hence about the structure of a typical set $A\in \SF_2(n)$. We then use a direct argument rather than counting such sets within the containers.

As discussed above, we need to get a handle on the structure of large containers. For this purpose, we first deduce from Theorem \ref{thm:structure} a structural result on $2$-wise sum-free sets of size close to $4n/5$, which may be of independent interest. 

\begin{prop}\label{prop:stability}
There exists an absolute positive constant $c$ such that the following holds for every $n\in \bN$ and every $\eta \in \bR$ with $2/n \le \eta \le c$. Let $C_1$ and $C_2$ be two sum-free sets (not necessarily disjoint) in $[n]$ with $\card{C_1\cup C_2} \ge (4/5-\eta)n$. Then, up to a permutation of $C_1$ and $C_2$, one of the following situations occurs:
\begin{itemize}
\item[(i)] $\card{C_1 \setminus F_{1,4}}+\card{C_2\setminus F_{2,3}} \le 14\eta n$;
\item[(ii)] $\card{C_1\setminus I_1}+\card{C_2\setminus I_2} \le 2424\sqrt{\eta}n$, where $I_1=\left(\frac{n}{5},\frac{2n}{5}\right] \cup \left(\frac{4n}{5},n\right]$ and $I_2=\left(\frac{2n}{5},\frac{4n}{5}\right]$.
\end{itemize}
\end{prop}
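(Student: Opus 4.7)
The plan is to apply Theorem~\ref{thm:structure} to both $C_1$ and $C_2$ and then perform a case analysis on the resulting $4\times 4$ grid of structure types. Assume without loss of generality that $|C_1|\ge |C_2|$, so $|C_1|\ge (2/5-\eta/2)n$, which lets us invoke Theorem~\ref{thm:structure} on $C_1$ once $c$ is small. Case~(i) of the theorem (all $C_1 \subseteq O$) is ruled out immediately: otherwise $C_2\cap E$ would contain at least $(4/5-\eta)n-n/2=(3/10-\eta)n$ elements, yet $\{a/2:a\in C_2\cap E\}$ is a sum-free subset of $[n/2]$ and so has size at most $n/4+O(1)$. In each remaining case one has $|C_1|\le (2/5+O(\sqrt{\eta}))n$: trivially in cases~(ii), (iii), and directly from the container in case~(v); and in case~(iv), where $m:=\min(C_1)\ge |C_1|$, one observes that $C_2\cap [1,m-1]$ is sum-free in $[m-1]$ and (since $|[m,n]\setminus C_1|\le n-m+1-|C_1|$ can absorb only so much of $C_2\setminus C_1$) must have size at least $m-(1/5+\eta)n-O(1)$, forcing $m\le (2/5+O(\eta))n$. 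Hence $|C_2|\ge (2/5-O(\sqrt{\eta}))n$, and Theorem~\ref{thm:structure} also applies to $C_2$; case~(i) for $C_2$ is ruled out symmetrically.

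The next step is to eliminate all pairs of cases for $(C_1,C_2)$ except a handful, via direct upper bounds on $|C_1\cup C_2|$. The pairs (ii,ii), (iii,iii), (iv,iv), (v,v) give $|C_1\cup C_2|$ at most $2n/5$, $2n/5$, $(3/5+O(\eta))n$, and $(2/5+O(\sqrt{\eta}))n$ respectively. The pairs (ii,iv), (iii,iv) and their transposes give $|C_1\cup C_2|\le |F_{1,4}\cup[(2/5-O(\eta))n,n]|\approx 19n/25$. The pairs (ii,v), (iii,v) and their transposes give $|C_1\cup C_2|\le |F_{1,4}\cup I_1|+O(\sqrt{\eta}n)\approx 16n/25$. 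Each is strictly less than $(4/5-\eta)n$ for $\eta$ below an absolute constant, contradicting the hypothesis. The only surviving pairs are (ii,iii) (and (iii,ii)) and (v,iv) (and (iv,v)).

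The pairs (ii,iii) and (iii,ii) give $C_1\subseteq F_{1,4}$ and $C_2\subseteq F_{2,3}$ (up to swap) exactly, so $|C_1\setminus F_{1,4}|+|C_2\setminus F_{2,3}|=0\le 14\eta n$, yielding alternative~(i). For the pair (v,iv), after relabelling, Theorem~\ref{thm:structure} places $C_1\subseteq I_1^\ast:=[(1/5-200\sqrt{\eta})n,(2/5+200\sqrt{\eta})n]\cup [(4/5-200\sqrt{\eta})n,n]$ and $C_2\subseteq [(2/5-O(\sqrt{\eta}))n,n]$. This gives $|C_1\setminus I_1|\le 600\sqrt{\eta}n$ and $|C_2\cap [(2/5-O(\sqrt{\eta}))n,2n/5]|\le O(\sqrt{\eta}n)$ immediately; the crucial step is to show $|C_2\cap (4n/5,n]|=O(\sqrt{\eta}n)$. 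For this, observe that $C_1\cap J_2=\emptyset$ for $J_2:=((2/5+200\sqrt{\eta})n,(4/5-200\sqrt{\eta})n]$, while $C_1\cup C_2$ must cover all but $O(\sqrt{\eta}n)$ elements of $[(1/5-O(\sqrt{\eta}))n,n]$ (a set of size $(4/5+O(\sqrt{\eta}))n$), so $|C_2\cap J_2|\ge (2/5-O(\sqrt{\eta}))n$. Shifting $C_2\cap J_2$ into $[0,(2/5-400\sqrt{\eta})n]$ and applying Lemma~\ref{lem:bootstrap}(ii) gives $2(C_2\cap J_2)\supseteq [(4/5+O(\sqrt{\eta}))n,n]$; since $C_2$ is sum-free, this sumset is disjoint from $C_2$, so $C_2\cap [(4/5+O(\sqrt{\eta}))n,n]=\emptyset$ and $|C_2\cap (4n/5,n]|\le O(\sqrt{\eta}n)$. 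Summing up, $|C_1\setminus I_1|+|C_2\setminus I_2|=O(\sqrt{\eta}n)$, and a careful accounting of constants yields the bound $2424\sqrt{\eta}n$ of alternative~(ii).

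The main obstacle is precisely the (v,iv) case: Theorem~\ref{thm:structure} confines $C_2$ only to the half-line $[|C_2|,n]$, and one needs the bootstrapping Lemma~\ref{lem:bootstrap}(ii), combined with the density of $C_2\cap J_2$ inherited from the largeness of $|C_1\cup C_2|$, to exploit the sum-free condition $2C_2\cap C_2=\emptyset$ and pin $C_2$ down essentially to the interval $I_2=(2n/5,4n/5]$.
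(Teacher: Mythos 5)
Your proof is correct in outline, but takes a genuinely different route from the paper's, and the difference is worth noting. The paper does not do a $4\times 4$ case analysis on the structure types of $C_1$ and $C_2$ directly. Instead, it first proves (Claim~\ref{claim:stability}) that $\max\{|C_1|,|C_2|\}\le (2/5+3\eta)n$ by a short combinatorial argument that does not invoke Theorem~\ref{thm:structure} at all: it considers the difference set $D=\{c_i-c_1\}$ of $C_1$ and shows that $|D\cap(C_2\setminus C_1)|$ is simultaneously large (since $D$ avoids $C_1$ and the complement is small) and small (since iterated differences of elements of $C_2\setminus C_1$ must land in $R=[n]\setminus(C_1\cup C_2)$), giving $|C_1|\le 2|R|+2$. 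This yields $|C_1\cap C_2|\le 7\eta n$ with an $O(\eta)$ error, after which the paper passes to the \emph{disjoint} sets $\t{C_1}=C_1\setminus C_2$, $\t{C_2}=C_2\setminus C_1$, applies Theorem~\ref{thm:structure} to both, and in the interval case (ii') finishes by the observation $\t{C_2}\setminus I_2\subseteq\t{C_2}\cap\t{I_1}\subseteq\t{I_1}\setminus\t{C_1}$, whose size is at most $|\t{I_1}|-|\t{C_1}|$. In particular the paper never uses Lemma~\ref{lem:bootstrap} here. Your argument replaces the disjointness trick with a sum-free argument: you use the density of $C_2$ in $J_2=((2/5+200\sqrt\eta)n,(4/5-200\sqrt\eta)n]$ together with Lemma~\ref{lem:bootstrap}(ii) to force $2C_2$ to cover $(4n/5,n]$ up to $O(\sqrt\eta)n$, and then sum-freeness kills $C_2\cap(4n/5,n]$. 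This works, but it is both longer and quantitatively weaker in intermediate steps: because you extract the bound $|C_1|\le(2/5+O(\sqrt\eta))n$ from case (v) of Theorem~\ref{thm:structure} rather than from a direct argument, you only get $|C_2|\ge(2/5-O(\sqrt\eta))n$ rather than the paper's $O(\eta)$ error, so your final constant will be larger (and harder to pin down precisely) than the stated $2424\sqrt\eta n$ unless you also track the precise constants. Also note a small bookkeeping point: the second application of Theorem~\ref{thm:structure} to $C_2$ in the (iv,v) subcase is done with parameter $\eta'=O(\sqrt\eta)$, so the case-(v) interval for $C_2$ would have margins of size $O(\sqrt{\eta'})n=O(\eta^{1/4})n$; it is only because in the end you always land in the (v,iv) configuration after swap that you never actually use case (v) for the \emph{smaller} set, so the $\eta^{1/4}$ does not propagate — but this deserves an explicit sentence. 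Overall the structure is sound; you have essentially re-derived the proposition by a self-contained case analysis in place of the paper's neater disjointification trick.
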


\begin{proof}[Proof of Proposition \ref{prop:stability}]
We begin by showing that neither $\card{C_1}$ nor $\card{C_2}$ are substantially greater than $2n/5$. 
\begin{claim}\label{claim:stability}
$\max\{\card{C_1},\card{C_2}\} \le (2/5+3\eta)n$.
\end{claim}
\begin{proof}
Denote $\t{C_2}=C_2\setminus C_1$ and $R=[n]\setminus (C_1\cup C_2)$. As $\card{C_1\cup C_2} \ge (4/5-\eta)n$, one has $\card{R} \le (1/5+\eta)n$. Let $c_1,\ldots,c_k$ be the elements of $C_1$, indexed in increasing order, and let $D=\{c_2-c_1,\ldots,c_k-c_1\}$. Since $C_1$ is sum-free, $D\cap C_1=\emptyset$, and consequently $D \subseteq \t{C_2}\cup R$. It follows that
\[
|D\cap \t{C_2}| \ge \card{D}-\card{R}.
\]

Let $\el=|D\cap \t{C_2}|$. From the definition of $D$, there are $\el$ distinct numbers $i_1, \ldots, i_{\el}$ in $\{2,3,\ldots,k\}$, indexed in increasing order, so that $D\cap \t{C_2}= \{c_{i_1}-c_1,\ldots,c_{i_{\el}}-c_1\}$.
Since $\t{C_2}$ is sum-free, one has $c_{i_t}-c_{i_1}=(c_{i_t}-c_1)-(c_{i_1}-c_1) \notin \t{C_2}$ for all $t$ with $2 \le t \le \el$. Also $c_{i_t}-c_{i_1} \notin C_1$, as $C_1$ is sum-free. Hence $c_{i_t}-c_{i_1}\in R$ for each $t \in \{2,\ldots,\el\}$, and so $\card{R} \ge \el-1$. Thus
\[
|D\cap \t{C_2}|\le \card{R}+1.
\]
Using these bounds on $|D\cap \t{C_2}|$ gives $\card{C_1}=\card{D}+1 \le 2\card{R}+2 \le (2/5+3\eta)n$ when $\eta \ge 1/n$. In the same manner we can show $\card{C_2} \le (2/5+3\eta)n$.
\end{proof}

We consider the sets $\t{C_1}=C_1\setminus C_2$ and $\t{C_2}=C_2\setminus C_1$. Clearly one has
$\t{C_1}\cap\t{C_2}=\emptyset$. Since $\max\{\card{C_1},\card{C_2}\}\le (2/5+3\eta)n$ by Claim \ref{claim:stability} and $\card{C_1\cup C_2} \ge (4/5-\eta)n$ by the assumption, we find $\card{C_1\cap C_2} \le 7\eta n$  
and $\min\{|\t{C_1}|,|\t{C_2}|\} \ge (2/5-4\eta)n$. 
We shall derive the lemma from this information and Theorem \ref{thm:structure}. 

Applying Theorem \ref{thm:structure} to $\t{C_1}$ and $\t{C_2}$, and noting that $\min\{|\t{C_1}|,|\t{C_2}|\} \ge (2/5-4\eta)n$ and $\t{C_1}\cap\t{C_2}=\emptyset$, we conclude that, up to a permutation of $\t{C_1}$ and $\t{C_2}$, one of the following conditions must be true:
\begin{itemize}
	\item[(i')] $\t{C_1}\subseteq F_{1,4}$ and $\t{C_2}\subseteq F_{2,3}$;
	\item[(ii')] $\t{C_1} \subset \t{I_1}$ and $\min(\t{C_2}) \ge (2/5-4\eta)n$, where 
	\[
	\t{I_1}=\left[\left(\tfrac15-400\sqrt{\eta}\right)n,\left(\tfrac25+400\sqrt{\eta}\right)n\right] \cup \left[\left(\tfrac45-400\sqrt{\eta}\right)n,n\right].\]
\end{itemize}
If condition (i') holds, then $\card{C_1\setminus F_{1,4}}+\card{C_2\setminus F_{2,3}}\le 2\card{C_1\cap C_2} \le 14\eta n$. Suppose, then, that condition (ii') is true. In particular, one has $|\t{C_1}\setminus I_1| \le 1200 \sqrt{\eta}n+3$. Hence 
\[
\card{C_1\setminus I_1} \le |\t{C_1}\setminus I_1|+\card{C_1\cap C_2}\le (1200\sqrt{\eta}+7\eta)n+3,
\] 
as $\card{C_1\cap C_2} \le 7\eta n$. 

It remains to bound $\card{C_2\setminus I_2}$. From condition (ii') and the fact that $\t{C_1}\cap \t{C_2}=\emptyset$, we learn that $\t{C_2}\setminus I_2 \subseteq \t{C_2}\cap \t{I_1} \subseteq \t{I_1}\setminus \t{C_1}$. Thus $C_2\setminus I_2 \subseteq (\t{C_2}\setminus I_2)\cup (C_1\cap C_2) \subseteq (\t{I_1}\setminus \t{C_1}) \cup (C_1\cap C_2)$, leading to
\[
\card{C_2\setminus I_2} \le |\t{I_1}\setminus \t{C_1}|+\card{C_1\cap C_2}=|\t{I_1}|-|\t{C_1}|+ \card{C_1\cap C_2}\le (1200\sqrt{\eta}+11\eta)n+3,
\]
where the second inequality follows from condition (ii'), and in the last we evaluated $|\t{I_1}| \le (2/5+1200\sqrt{\eta})+3$, $|\t{C_1}| \ge (2/5-4\eta)n$ and $\card{C_1\cap C_2} \le 7\eta n$. From these upper bounds on $\card{C_1\setminus I_1}$ and $\card{C_2\setminus I_2}$, we find 
\[
\card{C_1\setminus I_1}+\card{C_2\setminus I_2} \le (2400\sqrt{\eta}+18\eta)n+6 \le 2424\sqrt{\eta}n.\qedhere
\]
\end{proof}

We also need a removal lemma of Green \cite[Corollary 1.6]{Green05} for  sum-free  sets.

\begin{lemma}[Green]\label{lem:Green-removal}
Suppose that $C\subseteq [n]$ is a set containing $o(n^2)$ Schur
triples. Then there exists a sum-free subset $\t{C}$ of $C$ such that $|C\setminus \t{C}|=o(n)$.
\end{lemma}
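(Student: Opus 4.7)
\emph{Plan.} The approach will be to reduce Green's arithmetic removal lemma for Schur triples to the classical triangle removal lemma of Ruzsa and Szemer\'edi, using the standard trick of lifting to a cyclic group and building a tripartite Cayley-type graph. The two features making this reduction clean are (i) Schur triples in $[n]$ do not wrap around when passed to $\bZ/N\bZ$ for $N>2n$, and (ii) in the Cayley graph, each element $c\in C$ contributes $N$ parallel edges per type, so a small set of removed edges can only mark a small number of elements of $C$.

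First I would fix a prime $N$ with $2n<N\le 4n$ and identify $C$ with its image in $\bZ/N\bZ$. Since any Schur triple $x+y=z$ with $x,y,z\in[n]$ satisfies $z<2n<N$, Schur triples counted in $[n]$ coincide with those counted in $\bZ/N\bZ$, so their number is still $o(N^2)$. I would then form a tripartite graph $G$ on vertex classes $V_1=V_2=V_3=\bZ/N\bZ$ where $(a,b)\in V_1\times V_2$ is an edge iff $b-a\in C$, $(b,c)\in V_2\times V_3$ iff $c-b\in C$, and $(a,c)\in V_1\times V_3$ iff $c-a\in C$. A triangle $(a,b,c)$ of $G$ exists precisely when $b-a$, $c-b$, $c-a$ all lie in $C$, and automatically $(b-a)+(c-b)=(c-a)$; so the triangles of $G$ are in bijection with pairs $(a,(x,y,z))$ consisting of a shift $a\in\bZ/N\bZ$ and a Schur triple $(x,y,z)\in C^3$. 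Hence $G$ has $o(N^3)$ triangles, and the triangle removal lemma yields a set $E$ of $o(N^2)$ edges whose deletion destroys every triangle.

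To turn this edge removal into an element removal, for each $c\in C$ and each type $(i,j)\in\{(1,2),(2,3),(1,3)\}$ let $L_{ij}(c)$ denote the number of edges of $E$ of type $(i,j)$ with label $c$, and define
\[
\t{C}:=\{\,c\in C:L_{ij}(c)<N/3\text{ for every }(i,j)\,\}.
\]
Every $c\in C\setminus\t{C}$ contributes at least $N/3$ edges to $E$ in some type, so $\card{C\setminus\t{C}}\le 3\card{E}/N=o(n)$. Finally, if $\t{C}$ contained a Schur triple $x+y=z$, the $N$ candidate triangles $(a,a+x,a+z)$ in $G$ (one per $a\in\bZ/N\bZ$) would have their three edges labelled by $x$, $y$, $z$ in the three respective types, so the number destroyed by $E$ is at most $L_{12}(x)+L_{23}(y)+L_{13}(z)<N$, leaving at least one surviving triangle -- contradicting the triangle-freeness of $G\setminus E$. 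Thus $\t{C}$ is sum-free, as required.

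\emph{Main obstacle.} The substantive input is the triangle removal lemma itself, invoked as a black box: its proofs pass through Szemer\'edi's regularity lemma and therefore provide only tower-type quantitative control on the $o(n)$ term. The other delicate point is the choice of threshold $N/3$, which must be small enough for the three-term union bound to beat $N$ yet large enough that only $o(n)$ elements are discarded; once this cut-off is fixed, both halves of the argument fall out, but any meaningfully better quantitative version would require bypassing Szemer\'edi-type regularity altogether.
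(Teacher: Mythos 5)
Your argument is correct, and all the details check out: choosing a prime $N$ with $2n<N\le 4n$ ensures that $C$ embeds in $\bZ/N\bZ$ without creating or destroying Schur triples (since $x+y-z\in[2-n,2n-1]$ can only be a multiple of $N$ if it is zero); the triangles of the tripartite graph are in exact bijection with pairs (shift, Schur triple); the double-counting via $L_{ij}(c)$ correctly converts $o(N^2)$ removed edges into $o(n)$ removed elements; and the $N/3$ threshold plus the union bound over the three edge types indeed yield a contradiction with triangle-freeness should $\t{C}$ contain a Schur triple, since each removed edge can lie in at most one of the $N$ candidate triangles $(a,a+x,a+z)$.

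One thing worth flagging: the paper does not actually supply a proof of this lemma --- it is cited verbatim from Green \cite[Corollary 1.6]{Green05}. Your proof therefore is not ``the same as the paper's,'' because there isn't one, but it is also not the route Green took in \cite{Green05}. Green proves a general arithmetic removal lemma for systems of linear equations over finite abelian groups via a Fourier-analytic (Szemer\'edi-type) regularity lemma for bounded functions on groups; his Corollary 1.6 is then a specialisation of that machinery. What you have reconstructed is instead the classical Ruzsa--Szemer\'edi reduction of the one-equation case $x+y=z$ to the graph triangle removal lemma. For this particular application the two routes are essentially interchangeable and carry the same tower-type quantitative cost, but Green's framework handles more complicated linear systems uniformly, whereas the Cayley-graph reduction is more elementary and entirely self-contained given triangle removal as a black box. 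For the single Schur equation needed here, your approach is the cleaner and more direct one.
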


From Lemma \ref{lem:containers}, Proposition \ref{prop:stability} and Lemma \ref{lem:Green-removal} we obtain the following description of almost all $A\in \SF_2(n)$. Note that we shall identify each set $A\in \SF_2(n)$ with a pair $(A_1,A_2)$ of disjoint sum-free sets so that $A=A_1\dcup A_2$.

\begin{cor}\label{cor:small-containers}
Given $\de>0$, every set $A \in \SF_2(n)$, with at most $o(2^{4n/5})$ exceptions, has one of the following structures (up to a permutation of $A_1$ and $A_2$):
\begin{itemize}
	\item[(a)] $\card{A_1\setminus F_{1,4}}+\card{A_2\setminus F_{2,3}}\le \de n$;
	\item[(b)] $\card{A_1\setminus I_1}+\card{A_2\setminus I_2}\le \de n$, in which $I_1=\left(\frac{n}{5},\frac{2n}{5}\right]\cup \left(\frac{4n}{5},n\right]$ and $I_2=\left(\frac{2n}{5},\frac{4n}{5}\right]$.
\end{itemize}
\end{cor}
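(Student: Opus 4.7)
The plan is to combine the three tools listed right above the statement. Fix $\delta>0$. Let $c$ be the constant from Proposition~\ref{prop:stability}, and choose an auxiliary $\eta\in(0,c/2]$ small enough that $28\eta\le\delta/3$ and $2424\sqrt{2\eta}\le\delta/3$. Apply Lemma~\ref{lem:containers} to obtain the collection $\cC$ of pairs. For each $A\in\SF_2(n)$, fix any partition $A=A_1\dcup A_2$ into two sum-free sets and select a container $(C_1,C_2)\in\cC$ with $(A_1,A_2)\subseteq(C_1,C_2)$. The argument splits according to whether $|C_1\cup C_2|$ is below or above the threshold $(4/5-\eta)n$.

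First I would dispose of the ``small-container'' case by pure counting. If $|C_1\cup C_2|<(4/5-\eta)n$, then $A=A_1\cup A_2\subseteq C_1\cup C_2$, so the number of $A\in\SF_2(n)$ associated with such a container is at most $2^{(4/5-\eta)n}$. Summing over all containers and using property (ii) of Lemma~\ref{lem:containers} gives
\[
|\cC|\cdot 2^{(4/5-\eta)n}\le 2^{o(n)}\cdot 2^{(4/5-\eta)n}=o(2^{4n/5}),
\]
which absorbs these $A$ into the allowed $o(2^{4n/5})$ exceptions.

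Next I would handle the ``large-container'' case, where $|C_1\cup C_2|\ge(4/5-\eta)n$, and show that every such $A$ satisfies (a) or (b). By property (iii) of Lemma~\ref{lem:containers}, each $C_i$ contains $o(n^2)$ Schur triples, so Green's removal lemma (Lemma~\ref{lem:Green-removal}) supplies a sum-free $\tilde C_i\subseteq C_i$ with $|C_i\setminus \tilde C_i|=o(n)$. For $n$ large this gives
\[
|\tilde C_1\cup \tilde C_2|\ge |C_1\cup C_2|-|C_1\setminus\tilde C_1|-|C_2\setminus\tilde C_2|\ge (4/5-2\eta)n.
\]
Since $2\eta\le c$, Proposition~\ref{prop:stability} applies to the sum-free pair $(\tilde C_1,\tilde C_2)$ with parameter $2\eta$, yielding, up to swapping the two sets, either
\[
|\tilde C_1\setminus F_{1,4}|+|\tilde C_2\setminus F_{2,3}|\le 28\eta n \quad\text{or}\quad |\tilde C_1\setminus I_1|+|\tilde C_2\setminus I_2|\le 2424\sqrt{2\eta}\,n.
\]
Applying the same permutation to $(A_1,A_2)$ and using the inclusions $A_i\subseteq C_i=\tilde C_i\cup(C_i\setminus\tilde C_i)$, I obtain, for any set $X$, the estimate $|A_i\setminus X|\le|\tilde C_i\setminus X|+|C_i\setminus\tilde C_i|$. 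This transfers the two alternatives to $(A_1,A_2)$ up to an additive $o(n)$ error.

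Finally, by the choice of $\eta$, the bounds from Proposition~\ref{prop:stability} contribute at most $\delta n/3$ each, and for $n$ sufficiently large the removal-lemma slack $o(n)$ contributes at most $\delta n/3$, giving the desired inequalities $|A_1\setminus F_{1,4}|+|A_2\setminus F_{2,3}|\le\delta n$ or $|A_1\setminus I_1|+|A_2\setminus I_2|\le\delta n$. Nothing in the argument is genuinely subtle, but the main point to get right is choosing $\eta$ as a function of $\delta$ \emph{before} invoking Lemmas~\ref{lem:containers} and~\ref{lem:Green-removal}, so that the $o(n)$ and $o(2^{4n/5})$ error terms supplied by those results can be swallowed into the prescribed $\delta$ and the $o(2^{4n/5})$ bound on the number of exceptions.
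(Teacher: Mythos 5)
Your proof is correct and follows essentially the same route as the paper's: choose $\eta$ small in terms of $\delta$ up front, split on $\lvert C_1\cup C_2\rvert$ versus the threshold $(4/5-\eta)n$, absorb the small-container case via $\lvert\cC\rvert=2^{o(n)}$, and in the large-container case pass to sum-free $\tilde C_i$ via the removal lemma before invoking Proposition~\ref{prop:stability} with parameter $2\eta$. The only (cosmetic) difference is that the paper fixes an explicit $\eta=\min\{\delta/29,(\delta/3430)^2,c/2\}$ while you leave $\eta$ implicit; the bookkeeping is identical.
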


In the remainder of the paper we refer to sets that satisfy condition (a) and condition (b) from Corollary \ref{cor:small-containers} as {\bf type $(a)$} and {\bf type $(b)$} respectively. Note that Corollary \ref{cor:small-containers} implies that, in order to prove Theorem \ref{thm:counting}, it suffices to show that there are at most $O(2^{4n/5})$ sets $A\in \SF_2(n)$ of type (a) and type (b).

\begin{proof}[Proof of Corollary \ref{cor:small-containers}]
Let $\eta=\min\left\{\frac{\de}{29},\left(\frac{\de}{3430}\right)^2,\tfrac12c_{\ref{prop:stability}}\right\}$, where $c_{\ref{prop:stability}}$ is the absolute positive constant from Proposition \ref{prop:stability}. For each set $A\in \SF_2(n)$, we fix a pair $(A_1,A_2)$ of disjoint sum-free sets in $[n]$, and a container $(C_1,C_2)\in \cC$ such that $A=A_1\cup A_2$ and $(A_1,A_2)\subset (C_1,C_2)$. According to Lemma \ref{lem:containers} (ii), the number of set $A\in \SF_2(n)$ for which $\card{C_1\cup C_2} \le (4/5-\eta)n$ is certainly at most $2^{(4/5-\eta)n}\cdot 2^{o(n)}=o(2^{4n/5})$, so suppose  $\card{C_1\cup C_2} \ge (4/5-\eta)n$. By Lemma \ref{lem:Green-removal}, there exists a pair $(\t{C_1},\t{C_2})$ of sum-free sets such that  $(\t{C_1},\t{C_2})\subseteq (C_1,C_2)$ and $|C_1\setminus \t{C_1}|+|C_2\setminus \t{C_2}|=o(n)$. Observe that $|\t{C_1}\cup \t{C_2}|=\card{C_1\cup C_2}-o(n) \ge (4/5-2\eta)n$. Since $2\eta \le c_{\ref{prop:stability}}$ by the choice of $\eta$, we may appeal to Proposition \ref{prop:stability} to conclude that:
\begin{itemize}
\item[$(a')$] $|\t{C_1}\setminus F_{1,4}|+|\t{C_2}\setminus F_{2,3}|\le 28 \eta n$, or
\item[$(b')$] $|\t{C_1}\setminus I_1|+|\t{C_2}\setminus I_2|\le 3429 \sqrt{\eta} n$.
\end{itemize}
If case $(a')$ is true, then $\card{A_1\setminus F_{1,4}}+\card{A_2\setminus F_{2,3}}\le 28 \eta n +o(n)<\de n$ for $\eta \le \de/29$. If, however, case $(b')$ occurs then $\card{A_1\setminus I_1}+\card{A_2\setminus I_2} \le 3429 \sqrt{\eta} n+o(n)\le \de n$ since $\eta \le \left(\frac{\de}{3430}\right)^2$, completing the proof.
\end{proof}

\subsection{Restricted partitions and sumsets}\label{subsec:counting-lemmata}
In this section, we introduce some tools that are useful for counting
sets $A\in \SF_2(n)$ of type (a) and type (b).

A handy tool for the study of sumsets is Pl{\"u}nnecke's inequality \cite{Plunnecke}. 

\begin{lemma}[Pl{\"u}nnecke Inequality]
\label{lem:Plunnecke}
If $S$ is a set of integers and $\card{S+S} \le R\card{S}$, then 
\[
\card{kS} \le R^k\card{S}
\]
for any positive integer $k$.
\end{lemma}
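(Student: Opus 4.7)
The plan is to give the short proof of Pl\"unnecke's inequality due to Petridis, which selects an optimal subset of $S$ and exploits its minimality via induction. The first step is to pick a nonempty $X \subseteq S$ minimising the ratio $\card{X+S}/\card{X}$, and to let $K$ denote this minimum value. Since $\card{S+S}/\card{S} \le R$, we certainly have $K \le R$.

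The main work will be establishing what is now known as \emph{Petridis's lemma}: for every finite $C \subseteq \bZ$, $\card{X + S + C} \le K \card{X + C}$. I would prove this by induction on $\card{C}$. The base case $\card{C} = 1$ is immediate from the definition of $K$. For the inductive step, after fixing $c \in C$ and setting $C' = C \setminus \{c\}$, the key is to introduce the auxiliary set $Z := \{x \in X : x + c \in X + C'\}$. Two elementary identities then do the work: first, $(X + C') \cap (X + c) = Z + c$, which gives $\card{X + C} = \card{X + C'} + \card{X} - \card{Z}$; second, since $Z + c + S$ is contained in both $X + c + S$ and $X + C' + S$, one obtains $\card{(X+c+S) \setminus (X+C'+S)} \le \card{X+S} - \card{Z+S}$. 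The minimality of $K$ ensures $\card{Z + S} \ge K\card{Z}$, and combining these observations with the inductive hypothesis $\card{X + S + C'} \le K\card{X + C'}$ yields $\card{X + S + C} \le K(\card{X + C'} + \card{X} - \card{Z}) = K\card{X + C}$.

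Once Petridis's lemma is in hand, the conclusion follows almost for free. Iterating the lemma with $C = (k-1)S, (k-2)S, \ldots, S$ and using the base relation $\card{X + S} = K\card{X}$ yields $\card{X + kS} \le K^k \card{X}$ for every $k \ge 1$. Since $x + kS \subseteq X + kS$ for any fixed $x \in X$, it follows that $\card{kS} \le \card{X + kS} \le K^k \card{X} \le R^k \card{S}$, as required. The only subtle point in the entire argument is choosing the right auxiliary set $Z$ in the inductive step; once $Z$ is defined, both identities used above are verified directly from the definitions, and the rest of the proof is formal.
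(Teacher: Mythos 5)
The paper does not prove this lemma; it invokes Pl\"unnecke's inequality as a classical black box, with a citation to Pl\"unnecke's 1970 paper. Your proposal is a correct and complete proof via Petridis's argument, which is the now-standard short elementary route: select $X\subseteq S$ minimising $\card{X+S}/\card{X}$, establish the inequality $\card{X+S+C}\le K\card{X+C}$ by induction on $\card{C}$ using the auxiliary set $Z=\{x\in X: x+c\in X+C'\}$, and iterate. Both counting identities are correctly stated and verified from the definitions, the minimality of $K$ is used exactly where needed (including, implicitly, the trivial case $Z=\emptyset$), and the final chain $\card{kS}\le\card{X+kS}\le K^k\card{X}\le R^k\card{S}$ is sound. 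This is a genuinely different route from Pl\"unnecke's original proof, which proceeds through layered commutative graphs and a Menger-type argument on magnification ratios; Petridis's approach trades that machinery for a clever choice of $X$ and an elementary inclusion--exclusion, which is why it has become the default presentation. Since the paper treats the lemma as known, there is nothing internal to compare against, but your proof stands on its own.
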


We shall need the following bound on the number of $s$-subsets $S$ of $\{1,2,\ldots,D\}$ with $\card{S+S} \le R\card{S}$, due to Green and Morris \cite[Theorem 1.1]{GM16}.

\begin{lemma}[Green--Morris]\label{lem:Green-Morris}
	Fix $\de>0$ and $R>0$. Then the following holds for all integers $s$ with $s\ge s_0(\de,R)$. For any $D\in \bN$ there are at most
	\[
	2^{\de s}\binom{\frac12 Rs}{s}D^{\floor{R+\de}}
	\]
	sets $S\subseteq [D]$ with $\card{S}=s$ and $\card{S+S}\le R\card{S}$. 
\end{lemma}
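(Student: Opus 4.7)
The plan is to encode each set $S$ with $\card{S+S} \le Rs$ by a succinct ``generating structure''---a small subset of $[D]$ together with a template---and then count the possible structures.

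First, Pl\"unnecke's inequality (Lemma \ref{lem:Plunnecke}) gives $\card{kS} \le R^k s$ for every positive integer $k$, so every iterated sumset of $S$ stays of linear size. This is the workhorse underlying all subsequent covering estimates.

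Next, I would try to show that every such $S$ admits a ``generating pair'' $(B,T)$ with $B \subseteq S$ of cardinality exactly $\floor{R+\de}$, $T \subseteq [D]$ of cardinality at most $Rs/2$, and $S \subseteq B + T$, where both $B$ and $T$ are determined by $S$ through a deterministic rule (for instance, taking $B$ to be the $\floor{R+\de}$ smallest elements of $S$, and $T$ a canonical ``half'' of $S+S$ pinned down by $B$). Once such a pair is in hand the count follows at once: there are at most $\binom{D}{\floor{R+\de}} \le D^{\floor{R+\de}}$ choices for $B$; given $B$ and the resulting $T$, the number of subsets $S \subseteq B+T$ of cardinality $s$ is at most $\binom{\card{B+T}}{s} \le \binom{Rs/2}{s}$; and the remaining $2^{\de s}$ factor absorbs the slack (e.g., the auxiliary data needed to specify $T$ given $B$, and the polylogarithmic losses in the covering step).

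The main obstacle is producing the generating pair $(B,T)$ with precisely the claimed sizes. A naive application of the Pl\"unnecke-Ruzsa inequalities together with Ruzsa's covering lemma only gives a covering of $S$ by $O_R(1)$ translates of a set of size $\Theta(Rs)$, but sharpening the number of translates to the exact value $\floor{R+\de}$ and the template size to $Rs/2$ calls for a much more delicate iterative argument. I expect the real proof (as carried out by Green and Morris) proceeds by peeling off arithmetic structure from $S$ in stages, each stage costing only $2^{o(s)}$ in entropy, with a Freiman-type structural statement invoked at the final step to nail down the constants---none of which is feasible to reconstruct in a short sketch.
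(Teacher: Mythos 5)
The paper does not prove this lemma: it is imported verbatim from Green and Morris \cite[Theorem 1.1]{GM16} and used as a black box, so there is no in-paper argument for you to match.

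Your sketch correctly identifies the shape of the final count---pay $D^{\floor{R+\de}}$ to pin down a handful of anchor elements, pay $\binom{Rs/2}{s}$ to choose $S$ inside a set of size about $Rs/2$, and absorb subexponential slack into $2^{\de s}$---but the entire content of the theorem lives in the step you yourself flag as the ``main obstacle'': producing the generating pair $(B,T)$ with $|B|=\floor{R+\de}$, $|B+T|\le Rs/2$ and $S\subseteq B+T$, by a rule that costs at most $2^{\de s}$ in entropy. The deterministic recipe you float (take $B$ to be the $\floor{R+\de}$ smallest elements of $S$ and $T$ ``a canonical half of $S+S$'') has no reason to cover $S$, and the off-the-shelf tools fall short: Ruzsa covering gives $S\subseteq X+(S-S)$ for some $X\subseteq S$ with $|X|\le R$, but $|S-S|$ can be as large as $R^2 s$ (Pl\"unnecke/Ruzsa triangle), not $Rs/2$, so the resulting template is of the wrong order, and $X$ is in any case not determined by an entropy-free rule. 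Shrinking the template to size $Rs/2$ while keeping the number of anchors at exactly $\floor{R+\de}$ is precisely the technical core of the Green--Morris paper, obtained there through a delicate iterative, container-style supersaturation argument combined with Freiman-type structural input. As you concede, that machinery cannot be replaced by a one-line covering lemma; without it the counting in your second step is unjustified and the argument does not close. So this is a genuine gap, not a cosmetic one.
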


Lemma \ref{lem:Green-Morris} will be used in conjunction with some estimates on binomial coefficients, which we list here for future reference. It is well-known that for every integers $n$ and $k$ with $0 \le k \le n$ and every real $\al$ with $0\le \al \le 1/2$, we have 
\begin{equation}\label{entropy}
\binom{n}{k} \le 2^{H(k/n)n}, \ \text{and} \ \sum_{i \le \al n}\binom{n}{i} \le 2^{H(\al)n}, 
\end{equation}
where $H(x)=-x\log_2(x)-(1-x)\log_2(1-x)$ is the binary entropy function.

Another component in our argument is a crude bound on the number of restricted integer partitions (see \cite[Lemma 5.1]{ABMS14}). 

\begin{lemma}\label{lem:restricted-partitions}
	Given $k,\el \in \bN$, let $p^{*}_{\el}(k)$ denote the number of integer partitions of $k$ into $\el$ distinct parts. Then
	\[
	p^{*}_{\el}(k) \le \left(\frac{e^2k}{\el^2}\right)^{\el}.
	\]	
\end{lemma}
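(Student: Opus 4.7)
The plan is to reduce the count of restricted partitions to a count of compositions, and then apply two standard estimates. If $\el>k$ then $p^*_\el(k)=0$ and the bound is trivial (we may even assume $k\ge\binom{\el+1}{2}$, otherwise the count is zero), so throughout we assume $\el\le k$.

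First I would fix a partition of $k$ into $\el$ distinct positive parts, and observe that its $\el!$ orderings produce $\el!$ pairwise distinct compositions of $k$ into $\el$ positive parts. Since the total number of compositions of $k$ into $\el$ positive parts is the standard stars-and-bars quantity $\binom{k-1}{\el-1}$, this immediately yields
\[
p^*_\el(k)\;\le\;\frac{1}{\el!}\binom{k-1}{\el-1}\;\le\;\frac{1}{\el!}\binom{k}{\el},
\]
where the last inequality uses $\binom{k-1}{\el-1}=\tfrac{\el}{k}\binom{k}{\el}\le\binom{k}{\el}$.

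To conclude, I would invoke the two elementary estimates
\[
\binom{k}{\el}\;\le\;\left(\frac{ek}{\el}\right)^{\!\el}\qquad\text{and}\qquad \el!\;\ge\;\left(\frac{\el}{e}\right)^{\!\el},
\]
(the latter from $e^\el\ge \el^\el/\el!$). Substituting these into the previous inequality gives
\[
p^*_\el(k)\;\le\;\frac{(ek/\el)^\el}{(\el/e)^\el}\;=\;\left(\frac{e^2 k}{\el^2}\right)^{\!\el},
\]
as required. There is no real obstacle here; the only subtlety is remembering that we are overcounting in the first step (some compositions with $\el$ positive parts have repeated entries, but this only strengthens the inequality), and that the trivial case $\el>k$ must be noted separately.
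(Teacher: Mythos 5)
Your proof is correct. The paper does not reproduce a proof of this lemma but instead cites \cite[Lemma 5.1]{ABMS14}; the argument there is the same composition-counting reduction you use — each partition into $\el$ distinct parts yields $\el!$ distinct compositions, giving $p^*_\el(k)\le\binom{k-1}{\el-1}/\el!$, after which the standard bounds $\binom{k}{\el}\le (ek/\el)^\el$ and $\el!\ge(\el/e)^\el$ finish — so your approach matches the cited one.
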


To handle sets with large sumset, we shall apply the following lower tail estimate, which is a special case of Janson's inequality (see \cite[Theorem 2.14]{JLR}).

\begin{lemma}[Janson Inequality]\label{lem:Janson-Inequality}
	Suppose that $\{U_i\}_{i\in I}$ is a collection of subsets of a finite set $\Ga$. Let
	\[
	\mu=\sum_{i \in I}\left(\tfrac12\right)^{\card{U_i}} \quad \text{and} \quad \De=\sum_{i\sim j} \left(\tfrac12\right)^{\card{U_i\cup U_j}}, 
	\]
	where the second sum is over ordered pairs $(i,j)$ such that $i\ne j$ and $U_i\cap U_j \ne \emptyset$.
	Then the number of subsets of $\Ga$ that contain at most $\mu/2$ sets $U_i$ is at most 
	\[
	e^{-\mu^2/(8\mu+8\De)}\cdot 2^{\card{\Ga}}.
	\]
\end{lemma}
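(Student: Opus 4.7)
The plan is to reinterpret the statement as a probability bound under the uniform measure on subsets of $\Ga$, then invoke the lower tail of the classical Janson inequality. Sample $R\subseteq\Ga$ uniformly at random, equivalently by including each element of $\Ga$ independently with probability $1/2$; for $i\in I$ let $X_i$ be the indicator of the event $U_i\subseteq R$, and write $X=\sum_{i\in I}X_i$. A direct computation gives $\Prob[X_i=1]=(1/2)^{\card{U_i}}$, so $\mathbb{E}[X]=\mu$, while for indices $i\ne j$ with $U_i\cap U_j\ne\emptyset$ one has $\Prob[X_i=X_j=1]=(1/2)^{\card{U_i\cup U_j}}$; the sum of these pair probabilities over ordered pairs is exactly the quantity $\De$ in the statement.

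The key step is the lower tail version of Janson's inequality (\cite[Theorem 2.14]{JLR} in the form stated there), which asserts that for any $0\le t\le 1$,
\[
\Prob\bigl[X\le (1-t)\mu\bigr]\le \exp\!\left(-\frac{t^2\mu^2}{2(\mu+\De)}\right).
\]
Applying this with $t=1/2$ yields
\[
\Prob\!\left[X\le \tfrac{\mu}{2}\right]\le \exp\!\left(-\frac{\mu^2}{8(\mu+\De)}\right)=\exp\!\left(-\frac{\mu^2}{8\mu+8\De}\right).
\]
Since $R$ is uniform on the $2^{\card{\Ga}}$ subsets of $\Ga$, and the event $\{X\le \mu/2\}$ is precisely the event that $R$ contains at most $\mu/2$ of the sets $U_i$, the number of such subsets equals $2^{\card{\Ga}}\cdot \Prob[X\le \mu/2]$, which is bounded by $e^{-\mu^2/(8\mu+8\De)}\cdot 2^{\card{\Ga}}$, as desired.

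The only non-routine ingredient is the lower tail inequality itself, and the main (minor) obstacle in a fully self-contained presentation would be matching the constant $1/8$: this comes out of applying the standard FKG/Laplace-transform argument to $-X$ and optimising, a calculation that is entirely standard and is already encapsulated in the cited theorem. Since the paper is content to quote this result from \cite{JLR}, the verification above suffices.
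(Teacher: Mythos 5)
Your proposal is correct and essentially matches the paper's treatment: the paper simply quotes \cite[Theorem 2.14]{JLR} without giving a self-contained proof, and your argument makes explicit the standard translation between counting subsets of $\Ga$ and the probability under the uniform (i.e., i.i.d.\ $\mathrm{Bernoulli}(1/2)$) measure, followed by an application of the lower-tail Janson inequality with $t=1/2$. The constant check is right: $\exp\bigl(-\tfrac{(1/2)^2\mu^2}{2(\mu+\De)}\bigr)=\exp\bigl(-\tfrac{\mu^2}{8\mu+8\De}\bigr)$, and your $\De$ (over ordered pairs $i\ne j$) matches the convention needed for $\bar\De=\mu+\De$ in JLR's formulation.
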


\subsection{Counting sets of type $(a)$ and type $(b)$}\label{subsec:counting-containers}
Throughout we identify each set $A\in \SF_2(n)$ with a pair $(A_1,A_2)$ of disjoint sum-free sets so that $A=A_1\dcup A_2$.

The following lemma deals with sets of type $(a)$.

\begin{lemma}\label{lem:arithmetic-containers}
	There are $(1+o(1))2^{\ceil{4n/5}}$ sets $A\in \SF_2(n)$ of type $(a)$, provided that $\de>0$ is sufficiently small.
\end{lemma}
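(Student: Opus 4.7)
The plan is to match the trivial lower bound $2^{\ceil{4n/5}}$ with an $(1+o(1))$ upper bound. The lower bound is immediate: every $A\subseteq F_{1,4}\cup F_{2,3}$ is a type-$(a)$ set via the canonical partition $A_1=A\cap F_{1,4}$, $A_2=A\cap F_{2,3}$, in which both parts are sum-free and there are zero bad elements.

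For the upper bound I would overcount type-$(a)$ sets by $(A,\text{partition})$ pairs, each encoded as an assignment $\phi\colon[n]\to\{A_1,A_2,\mathrm{out}\}$, and split by the number of \emph{bad} positions---those $x$ where $\phi(x)$ deviates from the canonical choice: $x\in F_{1,4}$ with $\phi(x)=A_2$, $x\in F_{2,3}$ with $\phi(x)=A_1$, or $x\in E_5:=[n]\setminus(F_{1,4}\cup F_{2,3})$ with $\phi(x)\in\{A_1,A_2\}$. The type-$(a)$ condition bounds the bad count by $\delta n$, and the $2^{\ceil{4n/5}}$ bad-free assignments correspond bijectively to the lower-bound construction. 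It thus suffices to show that the total count $N_{\ge 1}$ of valid assignments with at least one bad position is $o(2^{\ceil{4n/5}})$.

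The key observation is that a single bad position already yields exponential Janson savings. Fix any bad $x$ with $\phi(x)=A_1$; sum-freeness of $A_1$ forbids all pairs $\{y,z\}\subseteq A_1$ for which $\{x,y,z\}$ forms a Schur triple (i.e.\ $y+z=x$, $x+y=z$, or $x+z=y$). Restricting to pairs within whichever of $F_{1,4}$ or $F_{2,3}$ is forced by the residue of $x$ mod $5$, I would extract a pairwise-disjoint matching $M_x$ of such pairs with $|M_x|\ge cn$ for some absolute constant $c>0$. For $x\in E_5$, this uses either the disjoint pairs $\{y,x-y\}\subseteq F_{1,4}$ (size $\Theta(\min(x,2n-x))$, dominant when $x$ is close to $n$) or a disjoint sub-matching of the chain pairs $\{y,y+x\}\subseteq F_{1,4}$ (size $\Theta(n)$ coming from $\Theta(\min(x,n-x))$ chains of average length $\Theta(n/x)$, dominant otherwise); their maximum is uniformly $\ge cn$. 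Analogous matchings arise for bad positions in $F_{1,4}$ (producing a matching in $F_{2,3}$) and $F_{2,3}$ (producing one in $F_{1,4}$). Because pairs in $M_x$ are disjoint, Janson's inequality (Lemma~\ref{lem:Janson-Inequality}) applies with $\Delta=0$, giving a savings factor at most $(3/4)^{|M_x|-\delta n}\le(3/4)^{(c-\delta)n}$ on the $2^{\ceil{4n/5}}$ unconstrained good-side configurations (the $\delta n$ correction accounts for pairs of $M_x$ possibly broken by bad positions). Multiplying by the $\binom{n}{k}\cdot 2^k$ ways of choosing positions and states for the $k$ bad elements, and summing over $1\le k\le\delta n$ via the entropy bound $\sum_{k\le\delta n}\binom{n}{k}\,2^k\le 2^{(H(\delta)+\delta)n}$, I obtain
\[
N_{\ge 1}\le 2^{\ceil{4n/5}}\cdot 2^{(H(\delta)+\delta)n}\cdot(3/4)^{(c-\delta)n},
\]
which is $o(2^{\ceil{4n/5}})$ provided $\delta$ is chosen so that $H(\delta)+\delta<(c-\delta)\log_2(4/3)$.

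The hardest part will be verifying the uniform matching bound $|M_x|\ge cn$ across all positions and residue types of bad $x$, particularly at the boundary ($x$ very small or very close to $n$, and bad $x\in F_{1,4}\cup F_{2,3}$ of small or large value). Those edge cases I would handle separately using direct counting together with Lemma~\ref{lem:restricted-partitions} to bound the few ways extreme bad positions can contribute, showing their aggregate count is also $o(2^{\ceil{4n/5}})$.
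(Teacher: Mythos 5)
Your strategy matches the paper's in its core idea: exploit one bad element to build a large matching of forbidden pairs inside $F_{1,4}$ (or $F_{2,3}$), extract a per-pair savings factor of $3/4$, and absorb the at most $\de n$ bad positions via an entropy bound. That is exactly what the paper does, except the paper keeps it maximally simple: it fixes a \emph{single} bad element $t$, selects $n/20$ pairwise-disjoint pairs $\{x,x+t\}\subseteq F_{1,4}$ when $t<n/2$ (or pairs summing to $t$ when $t\ge n/2$), and bounds the number of choices for $A_1\cap F_{1,4}$ by $2^{3n/10}3^{n/20}$ directly, then multiplies by $2^{2n/5}$ for $A_2\cap F_{2,3}$ and by $2^{2H(\de)n}$ for the two bad pieces. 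Since $2^{7n/10}3^{n/20}=(3/4)^{n/20}\cdot 2^{4n/5}$, the gain $\log_2(4/3)/20\approx 0.0208$ beats $2H(\de)$ for small $\de$, giving $o(2^{4n/5})$. Your more elaborate bookkeeping over all $k$ bad positions simultaneously is unnecessary (one bad element already wins, and the remaining bad elements are paid for by the $\binom{n}{\le\de n}^2$ factor), and two of your tool references are off-target: Lemma~\ref{lem:Janson-Inequality} is a lower-tail deviation estimate and is not what gives the factor $(3/4)^{|M_x|}$ --- once $M_x$ is a matching, that factor is an immediate direct count (and the Janson bound as stated would only give the weaker exponent $e^{-|M_x|/32}$); and Lemma~\ref{lem:restricted-partitions} bounds integer partitions and plays no role in this lemma (it is used later, for type $(b)$). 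Your worry about ``edge cases'' of extreme $x$ is also handled automatically by the two families of pairs (sum-to-$x$ pairs when $x$ is large, translate-by-$x$ pairs when $x$ is small), whose maximum is always $\Omega(n)$; this is essentially what the paper's split $t<n/2$ versus $t\ge n/2$ accomplishes. So: right idea, slightly overbuilt machinery, and a couple of misattributed tools.
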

\begin{proof} 
	There are $2^{\ceil{4n/5}}$ sets $A\in \SF_2(n)$ with $A_1\subseteq F_{1,4}$ and $A_2\subseteq F_{2,3}$. So, to prove the lemma, it suffices to show that the number of type $(a)$ sets $A$ with $0<\card{A_1\setminus F_{1,4}}+\card{A_2\setminus F_{2,3}}\le \de n$ is $o(2^{4n/5})$. By symmetry we only need to deal with the case that $A_1\setminus F_{1,4}$ contains at least one element, $t$ say. If $t<n/2$, then we may select $n/20$ disjoint pairs $(x,x+t)$ in $F_{1,4}$, and $A_1$ can not contain both of the elements of any of them since it is sum-free. The number of choices for the pair $(A_1\cap F_{1,4},A_2\cap F_{2,3})$ is thus no more than $2^{3n/10}3^{n/20}\cdot 2^{2n/5}=2^{7n/10}3^{n/20}$. Furthermore, since $\card{A_1\setminus F_{1,4}}+\card{A_2\setminus F_{2,3}}\le \de n$, the number of pairs $(A_1\setminus F_{1,4},A_2\setminus F_{2,3})$ is at most $\left(\sum_{i \le \de n}\binom{n}{i}\right)^2 \le 2^{2H(\de)n}$, due to \eqref{entropy}. We deduce that there are at most $2^{7n/10}3^{n/20}\cdot 2^{2H(\de)n}=o(2^{4n/5})$
ways to choose $(A_1,A_2)$. If $t\ge n/2$ then a very similar argument applies with pairs $(x,x-t)$.   
\end{proof}

We now turn our attention to sets of type $(b)$. Note that  Corollary \ref{cor:small-containers}, Lemmas  \ref{lem:arithmetic-containers} and \ref{lem:continuous-containers} together imply Theorem \ref{thm:counting}.

\begin{lemma}\label{lem:continuous-containers}
If $\de>0$ is sufficiently small, then there are $O(2^{4n/5})$ sets $A\in \SF_2(n)$ of type $(b)$. 
\end{lemma}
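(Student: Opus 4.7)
The plan is to decompose each $A\in\SF_2(n)$ of type~$(b)$ as $A=A_1\dcup A_2$ with $A_1,A_2$ disjoint sum-free, and to set $B_i:=A_i\setminus I_i$, $C_i:=A_i\cap I_i$ for $i\in\{1,2\}$. Then $B_1,B_2$ are disjoint with $B_1\subseteq[n/5]\cup(2n/5,4n/5]$, $B_2\subseteq[2n/5]\cup(4n/5,n]$, and $|B_1|+|B_2|\le\delta n$. I will count pairs $(A_1,A_2)$ (which upper-bounds the number of $A$'s) by first enumerating the ``exceptional'' part $(B_1,B_2)$ and then bounding the number of completions $(C_1,C_2)\in 2^{I_1}\times 2^{I_2}$ that yield a valid disjoint sum-free pair. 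The crucial starting point is that $I_1=(n/5,2n/5]\cup(4n/5,n]$ and $I_2=(2n/5,4n/5]$ are themselves disjoint sum-free subsets of $[n]$ with $|I_1\cup I_2|=4n/5$: every pairwise sum within $I_i$ lies outside $I_i$. Consequently, the empty case $B_1=B_2=\emptyset$ alone contributes exactly $2^{|I_1|+|I_2|}=2^{4n/5}$, matching the target bound on the nose, and it suffices to show that the contribution from nonempty $(B_1,B_2)$ is $o(2^{4n/5})$.

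For a nonempty $(B_1,B_2)$ I would apply Janson's inequality (Lemma~\ref{lem:Janson-Inequality}) to the random subset $C$ of $\Gamma:=I_1\cup I_2$ (each element independently with probability $1/2$), taking as forbidden events the pairs $\{x,x+t\}\subseteq I_i$ and $\{x,t-x\}\subseteq I_i$ (with $x\ne t-x$) for each $t\in B_i$ and $i\in\{1,2\}$: the first encodes the Schur triple $x+t=(x+t)$ and the second encodes $x+(t-x)=t$, each of which would force $A_i$ not to be sum-free because $t\in B_i\subseteq A_i$. A direct case analysis, based on which of the sub-intervals $(n/5,2n/5]$, $(4n/5,n]$, $(2n/5,4n/5]$ the endpoints fall in, shows that whenever $t$ lies at distance at least $\eps n$ from every boundary point of $\{n/5,2n/5,4n/5\}$, the number of events produced by $t$ is $\Omega(n)$. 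Combining this with the crude estimate $\Delta=O(n\cdot|B_1\cup B_2|^2)$ --- obtained by noting that each vertex of $\Gamma$ lies in at most $4|B_1\cup B_2|$ events --- gives $\mu^2/(8\mu+8\Delta)=\Omega(n)$ uniformly, so Janson delivers a completion count of at most $e^{-cn}\cdot 2^{4n/5}$ for some absolute $c>0$.

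The hard part will be the degenerate regime in which every element of $B_1\cup B_2$ lies within $\eps n$ of the boundary points, as the Janson bound for a single such $t$ is weak. I plan to partition the bad positions into dyadic scales: bad $t$'s lying at distance roughly $2^{j}$ from the nearest boundary point form a set of size $O(2^{j})$ and each produces $\Omega(2^{j})$ forbidden events, so applying Janson scale-by-scale yields a completion-count saving that absorbs the $3^{O(\eps n)}$ enumeration factor coming from bad positions, as long as $\eps$ is a sufficiently small absolute constant.

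Finally, enumerating $(B_1,B_2)$ by $|B_1|+|B_2|=k$ gives at most $\binom{n}{k}2^k$ pairs, and combining with the Janson estimate yields a nonempty contribution of at most
\[
\sum_{1\le k\le\delta n}\binom{n}{k}\,2^{k}\cdot e^{-cn}\cdot 2^{4n/5}\;\le\;2^{(H(\delta)+\delta)n+O(\log n)}\cdot e^{-cn}\cdot 2^{4n/5},
\]
which is $o(2^{4n/5})$ once $\delta$ is small enough that $H(\delta)+\delta<c/\ln 2$ (using~\eqref{entropy}). Adding the $2^{4n/5}$ from the empty case and the $O(2^{4n/5})$ from the degenerate regime yields the required $O(2^{4n/5})$ bound.
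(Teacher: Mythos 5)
Your setup is sensible: the observation that $I_1$ and $I_2$ are each sum-free (so the case $B_1=B_2=\emptyset$ contributes exactly $2^{4n/5}$) is correct, and the Janson treatment of the generic case (exceptional elements at distance $\ge\eps n$ from $\{n/5,2n/5,4n/5\}$) is sound and plays the same role as Claim~\ref{claim:counting-reduction-I}. But the degenerate regime is where the real content of the lemma lies, and the argument you sketch there has a genuine gap. Suppose the $k$ elements of $B_1$ all lie within distance $D$ of a single boundary, say $n/5$. Each $t=n/5-d$ then contributes only $\Theta(d)\le\Theta(D)$ forbidden pairs, all supported on the $\Theta(D)$ vertices of $\Gamma$ within distance $D$ of the boundaries $n/5,2n/5,4n/5,n$, and each such vertex is hit by up to $k$ of these pairs. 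This gives $\mu=\Theta(kD)$ but also $\Delta=\Theta(Dk^2)$, so the Janson exponent $\mu^2/(8\mu+8\Delta)$ collapses to $\Theta(D)$, \emph{independently of $k$}. Meanwhile there are $\binom{\Theta(D)}{k}$ ways to place those $k$ elements, which is $2^{\Theta(D)}$ when $k$ is a constant fraction of $D$; with the $1/8$ already present in Janson's exponent, $e^{-\Theta(D)}$ cannot absorb $2^{\Theta(D)}$, and the sum over $D$ diverges. The dyadic decomposition does not rescue this: forbidden pairs at different scales overlap heavily near the boundaries, so one cannot legitimately apply Janson ``scale-by-scale'', and a single honest application simply reproduces the collapse above. (Note that Janson's lower-tail bound degrades, rather than improves, as overlapping events are added, even though the true completion count can only decrease.)

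The paper handles the degenerate case with a substantially different, multi-step argument. After the analogue of your generic step, it fixes $S:=A\cap[n/5]$ and exploits a \emph{forcing chain}: sum-freeness of $A_1$ forces $A\cap 2S\subseteq A_2$, which forces $A\cap 2(A\cap 2S)\subseteq A_1$, which finally forces $S+(A\cap 2(A\cap 2S))$ to be disjoint from $A$ inside $(4n/5,n]$. These compounded constraints, parameterised by $\ell=|S|$ and $k=\sum_{a\in S}(n/5-a)$, are estimated using the restricted-partitions bound (Lemma~\ref{lem:restricted-partitions}), Janson (Claim~\ref{claim:counting-reduction-II}) and --- crucially, when $k$ and $|S+S|$ are both small so that neither of the first two tools suffices --- the Green--Morris bound on sets with small sumset (Lemma~\ref{lem:Green-Morris}, Claim~\ref{claim:counting-reduction-IV}). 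The forcing chain and the Green--Morris input are the two ideas your proposal lacks, and they are precisely what is needed to beat the enumeration cost in the degenerate regime that your Janson estimate cannot handle.
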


The proof of Lemma \ref{lem:continuous-containers} is fairly long and technical so, in order to aid the reader, we shall start by giving a brief sketch. The argument is split into four claims; the first three being relatively straightforward, and the last being somewhat more involved.

We begin, in Claim \ref{claim:counting-reduction-I}, by using a direct argument to give a description of almost all sets $A\in \SF_2(n)$ of type $(b)$. In Claims \ref{claim:counting-reduction-II}, \ref{claim:counting-reduction-III} and \ref{claim:counting-reduction-IV}, we use this description to bound the number of sets $A\in \SF_2(n)$ with $S=A\cap [n/5]$ fixed. Specifically, writing $\el=\card{S}$ and $k=\sum_{a\in S}(n/5-a)$, in Claim \ref{claim:counting-reduction-II} we use Claim \ref{claim:counting-reduction-I}, Lemmas \ref{lem:restricted-partitions} and \ref{lem:Janson-Inequality} to deal with the case
$k\gg \el^2$. Then, in Claim \ref{claim:counting-reduction-III}, we use Claim \ref{claim:counting-reduction-I} and Lemma \ref{lem:restricted-partitions} to handle the case $k=O(\el^2)$ and $\card{S+S} \gg \card{S}$. Finally, in Claim \ref{claim:counting-reduction-IV}, we treat the remaining (hard) case; however, since we now have $\card{S+S}=O(\card{S})$, we may apply Lemma \ref{lem:Green-Morris} in place of Lemma \ref{lem:restricted-partitions}.

\begin{proof}[Proof of Lemma \ref{lem:continuous-containers}]
Fix $\de>0$ sufficiently small, and let $n\in \bN$. We shall show that there are at most $O(2^{4n/5})$ sets $A\in \SF_2(n)$ of type $(b)$. Since for us the residue of $n$ modulo $5$ will not matter, we assume for simplicity throughout the proof that $n$ is divisible by $5$. We begin by proving that a typical set $A\in \SF_2(n)$ of type $(b)$ has the following property:
\[
	(\al) \hspace{0.1cm} A_1 \subseteq \left[\left(\tfrac15-\tfrac{1}{100}\right)n,\left(\tfrac25+\tfrac{1}{100}\right)n \right] \cup \left[\left(\tfrac45-\tfrac{1}{100}\right)n,n \right] \hspace{0.1cm} \text{and} \hspace{0.1cm} A_2 \subseteq \left[\left(\tfrac25-\tfrac{1}{100}\right)n,\left(\tfrac45+\tfrac{1}{100}\right)n \right].
\]

\begin{claim}\label{claim:counting-reduction-I}
With $o(2^{4n/5})$ exceptions, all sets $A \in \SF_2(n)$ of type $(b)$ satisfy $(\al)$.
\end{claim}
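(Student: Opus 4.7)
The plan is a union bound over a single ``witness'' element $t$ violating $(\al)$. Sum-freeness of $A_1$ or $A_2$ applied at $t$ will produce a matching of $\Omega(n)$ pairwise-disjoint pairs inside $I_1$ or $I_2$ at least one of whose endpoints must be omitted from $A_1$ or $A_2$, thereby saving a factor $(3/4)^{\Omega(n)}$ in the count; this saving will defeat both the entropy $2^{2H(\de)n}$ coming from the type-$(b)$ overflow and the polynomial factor from the union bound. By symmetry between $A_1$ and $A_2$, a type-$(b)$ pair $(A_1,A_2)$ failing $(\al)$ contains some $t$ in one of the four ranges $A_1\cap[1,(\tfrac15-\tfrac1{100})n)$, $A_1\cap((\tfrac25+\tfrac1{100})n,(\tfrac45-\tfrac1{100})n)$, $A_2\cap[1,(\tfrac25-\tfrac1{100})n)$, or $A_2\cap((\tfrac45+\tfrac1{100})n,n]$; there are at most $n$ choices, so it suffices to show that each contributes $o(2^{4n/5}/n)$ pairs. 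I focus on $t\in A_1$; the case $t\in A_2$ is handled identically.

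Sum-freeness of $A_1$ forbids $\{u,v\}\subseteq A_1$ whenever $u+v=t$ or $|u-v|=t$. I produce a matching $\mathcal{M}(t)$ of at least $n/200$ such forbidden pairs inside $I_1$ as follows. For $t<(\tfrac15-\tfrac1{100})n$, shift pairs $\{x,x+t\}$ with both entries in a single length-$n/5$ component of $I_1$ form a disjoint union of at most $t$ paths (the residue classes mod $t$), yielding a matching of size $\ge(n/5-t)/2\ge n/200$ per component. For $t$ in the middle bad range, the shift pairs $\{x,x+t\}$ with $x\in(n/5,2n/5]$ and $x+t\in(4n/5,n]$ are automatically pairwise disjoint (the two coordinates of each pair lie in distinct components of $I_1$) and number exactly $\min(t-2n/5,\,4n/5-t)\ge n/100$. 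The cases $t\in A_2$ are symmetric: shift pairs inside $I_2$ for small $t$, and reflection pairs $\{x,t-x\}$ inside $I_2$ (automatically disjoint since $x\mapsto t-x$ is an involution) for $t>(\tfrac45+\tfrac1{100})n$, each yielding $\ge n/200$.

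Given $\mathcal{M}(t)$ of size $k\ge n/200$ inside $I_1$, the number of choices for $A_1\cap I_1$ is at most $3^k\cdot 2^{|I_1|-2k}$ (each matched pair rules out one of four local configurations), the number of choices for $A_2\cap I_2$ is $\le 2^{|I_2|}$, and by type $(b)$ together with \eqref{entropy} the overflows $A_1\setminus I_1$ and $A_2\setminus I_2$ contribute at most $\bigl(\sum_{i\le\de n}\binom{n}{i}\bigr)^2\le 2^{2H(\de)n}$. Multiplying gives
\[
\#\{(A_1,A_2):\,t\text{ is a witness}\}\le 2^{|I_1|+|I_2|}\cdot\bigl(\tfrac34\bigr)^{k}\cdot 2^{2H(\de)n}=2^{4n/5}\cdot\bigl(\tfrac34\bigr)^{k}\cdot 2^{2H(\de)n},
\]
and choosing $\de$ small enough that $2H(\de)<\tfrac1{200}\log_2(\tfrac43)$ makes this at most $2^{4n/5-\eps n}$ for some $\eps=\eps(\de)>0$. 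Summing over the $\le n$ witnesses yields the required $o(2^{4n/5})$ bound.

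The main obstacle is constructing $\mathcal{M}(t)$ uniformly in $t$: close to the boundary of any bad zone, naive mixtures of shift and reflection pairs can share endpoints, so I commit to a single family of pairs per regime and lean on the $n/100$ buffer between the deeply bad zone and $I_i$ to guarantee the matching has linear size. Verifying the matching bound $k\ge n/200$ in each subcase is a direct but slightly tedious case check; everything else in the argument is a routine entropy comparison.
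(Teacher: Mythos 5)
Your proof is correct and follows essentially the same approach as the paper: fix a witness element $t\in A_1$ or $t\in A_2$ outside the allowed zone, build a linear-size matching of forbidden pairs inside $I_1$ or $I_2$ using sum-freeness at $t$, multiply the per-pair saving $3/4$ against the trivial bound $2^{|I_1|+|I_2|}=2^{4n/5}$, absorb the $2^{2H(\de)n}$ overflow from the type-$(b)$ defect, and union-bound over $t$. The one place your write-up is genuinely more careful than the paper's is the case $t\in A_2$ with $t>(\tfrac45+\tfrac1{100})n$: the paper dispatches $A_2$ with ``the same conclusion can be drawn,'' but for such large $t$ the shift pairs $\{x,x+t\}$ leave $[n]$, and one needs the reflection pairs $\{x,t-x\}$ as you do. Your matching bounds ($\ge n/200$ in the small-$t$ regimes, $\ge n/100$ in the middle range via cross-component shift pairs) are stronger than, and consistent with, the paper's stated $n/400$.
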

\begin{proof}
Let $A\in\SF_2(n)$ be a set of type $(b)$ that does not posses property $(\al)$. If $A_1$ contains an element $t\in \left[(\tfrac15-\tfrac{1}{100})n\right] \cup \left[\left(\tfrac25+\tfrac{1}{100}\right)n,\left(\tfrac45-\tfrac{1}{100}\right)n\right]$, then we can pick at least $n/400$ disjoint pairs $(x,x+t)$ in $I_1$. Thus the number of ways to choose $(A_1\cap I_1,A_2\cap I_2)$ is at most $2^{79n/200}3^{n/400}\cdot 2^{2n/5}=2^{159n/200}3^{n/400}$. In addition, since $\card{A_1\setminus I_1}+\card{A_2\setminus I_2} \le \de n$, there are at most $2^{2H(\de)n}$ choices for $(A_1\setminus I_1,A_2\setminus I_2)$. From these estimates it follows that there are at most $2^{159n/200}3^{n/400}\cdot 2^{2H(\de)n}=o(2^{4n/5})$ possible assignments for $(A_1,A_2)$. The same conclusion can be drawn for the case that $A_2$ has at least one element in $\left[(\tfrac25-\tfrac{1}{100})n\right] \cup \left[\left(\tfrac45+\tfrac{1}{100}\right)n,n\right]$.
\end{proof}

From now on we may restrict our attention to those $A\in \SF_2(n)$ satisfying $(\al)$. Let  
\[
S(A)=\{x\in A:x \le n/5\}
\]
denote the collection of elements of $A$ which are at most $n/5$. We shall count the number of sets $A \in \SF_2(n)$ with $S(A)$ fixed. The following simple but crucial observation will be exploited several times to bound the number of ways to choose $A\cap \{n/5+1,\ldots,n\}$.

\begin{observation}
Every set $A\in \SF_2(n)$ with property $(\al)$ satisfies the following:
\begin{itemize}
\item[(i)] $S(A)=A_1\cap \left[\left(\tfrac15-\tfrac{1}{100}\right)n,\tfrac15 n \right]$, and $A\cap (S(A)+S(A)) \subseteq A_2\cap \left[\left(\tfrac25-\tfrac{1}{50}\right)n,\tfrac25 n \right]$;
\item[(ii)] If $X \subseteq A_2\cap \left[\left(\tfrac25-\tfrac{1}{50}\right)n,\tfrac25 n \right]$, then $A\cap \{4n/5+1,\ldots,n\}$ and $S+(A\cap 2X)$ are disjoint subsets of $\{4n/5+1,\ldots,n\}$.  	
\end{itemize}
\end{observation}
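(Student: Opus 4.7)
Both parts of the Observation are direct deductions from the sum-free properties of $A_1$ and $A_2$ together with the structural information in property $(\al)$; neither presents any real obstacle.

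For part (i), I would first note that $(\al)$ yields $A_2\subseteq [(2/5-1/100)n,(4/5+1/100)n]$, so $A_2\cap [n/5]=\emptyset$ and therefore $S(A)=A\cap[n/5]=A_1\cap[n/5]$. A second application of $(\al)$, which confines the lower piece of $A_1$ to $[(1/5-1/100)n,(2/5+1/100)n]$, then gives $S(A)=A_1\cap[(1/5-1/100)n,n/5]$. For the sumset inclusion, I take $y\in A\cap (S(A)+S(A))$, write $y=s_1+s_2$ with $s_1,s_2\in S(A)\subseteq A_1$, and observe that $y\in A_1$ would make $(s_1,s_2,y)$ a monochromatic Schur triple in the sum-free set $A_1$, a contradiction; hence $y\in A_2$. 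Since $S(A)\subseteq [(1/5-1/100)n,n/5]$ forces $S(A)+S(A)\subseteq [(2/5-1/50)n,2n/5]$, the stated inclusion follows.

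For part (ii), I fix $X\subseteq A_2\cap [(2/5-1/50)n,2n/5]$. First, for any $y\in A\cap 2X$, writing $y=x_1+x_2$ with $x_1,x_2\in X\subseteq A_2$ and invoking the sum-freeness of $A_2$ forces $y\notin A_2$, so $y\in A_1$; note also $y\in 2X\subseteq [(4/5-1/25)n,4n/5]$. Now I take $s\in S(A)\subseteq A_1$ and set $z=s+y$. From $s\in [(1/5-1/100)n,n/5]$ one computes $z\in [(1-1/100-1/25)n,n]\subseteq (4n/5,n]$, so $z$ indeed lies in the target interval. If $z\in A_1$ then $s,y,z\in A_1$ would form a Schur triple, contradicting $A_1$ sum-free; and if $z\in A_2$ then $(\al)$ forces $z\le (4/5+1/100)n$, contradicting $z\ge (1-1/100-1/25)n$. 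Hence $z\notin A$, which is exactly the disjointness of $A\cap (4n/5,n]$ and $S(A)+(A\cap 2X)$.

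The only subtlety is the numerical check that the lower bound on $z=s+y$ strictly exceeds $(4/5+1/100)n$, which reduces to $1-1/100-1/25>4/5+1/100$, i.e., $1/5>6/100$; this holds trivially and explains the choice of the slack parameter $1/100$ built into property $(\al)$. Otherwise the proof is a mechanical ``colour-chase'' through the sum-free conditions.
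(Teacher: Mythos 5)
Your proof is correct and follows essentially the same route as the paper's own argument: both parts reduce to noting that a sum in $2A_1$ (resp. $2A_2$) cannot lie in $A_1$ (resp. $A_2$) by sum-freeness, and then using the interval constraints from $(\al)$ to pin down where the relevant sumset lives. The only cosmetic difference is that you reason element-by-element (pick $s,y$ and set $z=s+y$) while the paper phrases the same deduction at the level of sets ($2S\subseteq 2A_1$, $A\cap 2A_1\subseteq A_2$, etc.); the arithmetic check you flag at the end is exactly the one implicit in the paper's final inclusion $A_2\cap[(1-\tfrac{1}{20})n,n]=\emptyset$.
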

\begin{proof} To ease notation we shall write $S$ for $S(A)$. 
	
(i) The first statement holds since $A_2\cap [n/5]=\emptyset$ and $\min(A_1) \ge \left(\tfrac15-\tfrac{1}{100}\right)n$ by property $(\al)$. Since $S=A_1\cap \left[\left(\tfrac15-\tfrac{1}{100}\right)n,\tfrac15 n \right]$, we have $2S \subseteq 2A_1\cap \left[\left(\tfrac25-\tfrac{1}{50}\right)n,\tfrac25 n \right]$. As $A_1 \cap 2A_1=\emptyset$ and $A=A_1\cup A_2$, this forces 
\[
(A\cap 2S) \subseteq (A\cap 2A_1)\cap \left[\left(\tfrac25-\tfrac{1}{50}\right)n,\tfrac25 n \right] \subseteq A_2\cap \left[\left(\tfrac25-\tfrac{1}{50}\right)n,\tfrac25 n \right]. 
\]

(ii) As $X\subseteq A_2\cap  \left[\left(\tfrac25-\tfrac{1}{50}\right)n,\tfrac25 n \right]$, we have $2X\subseteq 2A_2 \cap  \left[\left(\tfrac45-\tfrac{1}{25}\right)n,\tfrac45 n \right]$. Since $A_2\cap 2A_2=\emptyset$ and $A=A_1\cup A_2$, it follows that 
\[
(A\cap 2X) \subseteq (A\cap 2A_2)\cap \left[\left(\tfrac45-\tfrac{1}{25}\right)n,\tfrac45 n \right] \subseteq A_1\cap \left[\left(\tfrac45-\tfrac{1}{25}\right)n,\tfrac45 n \right].
\]
As $S=A_1\cap \left[\left(\tfrac15-\tfrac{1}{100}\right)n,\tfrac15 n \right]$ due to (i), this implies $S+(A\cap 2X) \subseteq 2A_1 \cap \left[\left(1-\tfrac{1}{20}\right)n,n \right]$. In particular, one has $S+(A\cap 2X) \subseteq \{4n/5+1,\ldots,n\}$. Furthermore, the intersection of $A$ and $S+(A\cap 2X)$ is contained in 
$(A\cap 2A_1)\cap \left[\left(1-\tfrac{1}{20}\right)n,n \right] \subseteq A_2 \cap \left[\left(1-\tfrac{1}{20}\right)n,n \right]\overset{(\al)}{=}\emptyset$.
These properties imply the statement. 
\end{proof}

The remainder of the proof involves some careful counting using the observation as well as Lemmas \ref{lem:Green-Morris}, \ref{lem:restricted-partitions} and \ref{lem:Janson-Inequality}.
We shall break up the calculation into three claims. In the first two, we count the sets $A$ for which $\sum_{a\in S(A)}(n/5-a)$ is large (Claim \ref{claim:counting-reduction-II}), or $\sum_{a\in S(A)}(n/5-a)$ is small and $\card{S(A)+S(A)}$ is large (Claim \ref{claim:counting-reduction-III}). Finally we count the remaining sets in Claim \ref{claim:counting-reduction-IV}.

Let $\cS(k,\el)$ denote the collection of sets $S\subseteq [n/5]$ with $\card{S}=\el$ and 
\[
\sum_{a\in S}(n/5-a)=k.
\]
\begin{claim}\label{claim:counting-reduction-II}
For a given fixed $\ell \in \bN$, there are at most $e^{-\el}2^{4n/5}$ sets $A\in \SF_2(n)$ of type $(b)$ which satisfy $(\al)$ and with $S(A)\in \cS(k,\el)$ for some $k\ge \el^2/\de^2$.
\end{claim}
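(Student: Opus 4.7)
My plan is to combine the restricted partition bound (Lemma \ref{lem:restricted-partitions}) with Janson's inequality (Lemma \ref{lem:Janson-Inequality}). Writing $N(S)$ for the number of sets $A\in\SF_2(n)$ of type $(b)$ with $S(A)=S$ satisfying $(\al)$, it suffices to show that
\[
\sum_{k\ge\ell^2/\delta^2}|\cS(k,\ell)|\cdot\max_{S\in\cS(k,\ell)}N(S)\le e^{-\ell}2^{4n/5}.
\]
The first factor is straightforward: each $S\in\cS(k,\ell)$ corresponds bijectively with a partition of $k$ into $\ell$ distinct non-negative parts $\{n/5-a:a\in S\}$, so Lemma \ref{lem:restricted-partitions} gives $|\cS(k,\ell)|\le(e^2k/\ell^2)^\ell$.

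For the second factor, I will apply Janson's inequality with ground set $\Gamma=(n/5,n]$ and the family of 4-element forbidden configurations
\[
U_{s_0,\ldots,s_4}=\{\,s_1+s_2,\ s_3+s_4,\ s_1+s_2+s_3+s_4,\ s_0+s_1+s_2+s_3+s_4\,\}\subseteq\Gamma,
\]
indexed by 5-tuples $(s_0,\ldots,s_4)\in S^5$. That $A\setminus S$ cannot contain any such $U$ follows from a short chain argument using the observation: property (i) forces $s_1+s_2,s_3+s_4\in A_2$ whenever they lie in $A$; sum-freeness of $A_2$ then pushes $s_1+s_2+s_3+s_4$ into $A_1$; adding $s_0\in A_1\supseteq S$ and using sum-freeness of $A_1$ would require $s_0+\cdots+s_4\in A_2$, but this sum lies in $5S\subseteq(4n/5,n]$ and is excluded from $A_2$ by $(\al)$. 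A direct count yields $\mu\asymp\ell^5$, and $\Delta$ can be estimated by counting intersecting pairs of tuples via the trivial additive-energy bound $E_+(S)\le\ell^3$, giving $\Delta=O(\ell^9)$. Lemma \ref{lem:Janson-Inequality} then yields $N(S)\le 2^{4n/5}e^{-c\ell}$ for some absolute constant $c>0$.

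The main obstacle is controlling the sum over $k$: a uniform Janson bound of $e^{-c\ell}$ cannot by itself dominate the factor $(e^2k/\ell^2)^\ell$ once $k\gg\ell^2$. To close this gap, I will refine the Janson computation by restricting the family of forbidden configurations to those 5-tuples whose last element $s_0+\cdots+s_4$ lands in a short sub-interval of $(4n/5,n]$; this exploits the fact that when $k\ge\ell^2/\delta^2$ the set $S$ is forced to be spread (some element at distance $\ge k/\ell$ from $n/5$), producing an additional savings of order $(\ell^2/k)^{\Omega(\ell)}$ from Janson. Once this $k$-dependent savings is extracted, the sum over $k$ telescopes to $e^{-\ell}2^{4n/5}$, and the technically delicate step is precisely the additive-combinatorial bookkeeping underlying this refinement, in particular controlling $\Delta$ under the restriction.
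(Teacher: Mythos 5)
Your proposal shares the paper's two top-level ingredients (Lemma \ref{lem:restricted-partitions} for the number of choices of $S$, and Lemma \ref{lem:Janson-Inequality} for the number of completions), but it chooses a fundamentally different family of forbidden configurations, and this is where the argument breaks down. With $4$-element configurations $U_{s_0,\ldots,s_4}$ indexed by $S^5$, the quantity $\mu$ is at most of order $\el^5$ regardless of $k$, so the best Janson exponent you can possibly extract is $\mu^2/(8\mu+8\De)=O(\el)$, uniformly in $k$. You correctly notice that a uniform $e^{-c\el}$ cannot beat the factor $(e^2k/\el^2)^{\el}$ when $k\gg\el^2$, so the sum over $k\ge\el^2/\de^2$ is not under control. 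The proposed fix is not a fix: restricting the $5$-tuples to those whose sum lands in a short sub-interval can only \emph{decrease} $\mu$ (and hence weaken the Janson bound), and there is no mechanism by which it could produce a savings of $(\el^2/k)^{\Omega(\el)}$. The fundamental obstacle is that the size of your configuration family does not grow with $k$.

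The paper's proof is built precisely around getting $k$ into the exponent. It puts the ground set in the short window $V(G)=\{n/5+1,\ldots,2n/5\}$ and takes the forbidden configurations to be $2$-element pairs $\{x,x+s\}$ with $s\in S$. Because each $s\in S$ contributes exactly $n/5-s$ such pairs, the total number of configurations is $\sum_{s\in S}(n/5-s)=k$, so $\mu=k/4$, $\De\le k\el/2$, and Janson gives savings $e^{-k/96\el}$ — a quantity that grows with $k$, dominating the factor $(e^2k/\el^2)^{\el}$ once $k\ge\el^2/\de^2$. The price paid is that these pairs are only forbidden in $A_1$, not in all of $A$ (they may sit in $A_2$), so Janson alone covers only the sets $A$ for which $A\cap V(G)$ hits few edges of $G$ (the class $\cI_1(S)$ in the paper); the complementary case $\cI_2(S)$ requires a separate direct argument via the Observation, disjoint edges, and the set $2B$. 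If you wanted to salvage the cleaner ``absolutely forbidden'' structure of your $5$-tuples, you would need to find a family of truly forbidden configurations whose count grows linearly in $k$, which your current setup does not provide.
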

\begin{proof}
For $k\ge \el^2/\de^2$ and $S\in \cS(k,\el)$, let $\cI(S)$ denote the family of all sets $A\in \SF_2(n)$ of type $(b)$ that satisfy $(\al)$ and with $S(A)=S$. We shall first bound $\cI(S)$ and then sum over choices of $S$. Define the graph $G$ of `forbidden monochromatic pairs' by setting
\[
V(G)=\{n/5+1,\ldots, 2n/5\}, \ \text{and} \ E(G)=\{\{x,x+s\}:s\in S\}.
\]
We partition $\cI(S)=\cI_1(S)\dcup \cI_2(S)$, in which $\cI_1(S)$ consists of all those sets $A\in \cI(S)$ having the property that $A\cap V(G)$ contains at most $k/8$ edges of $G$. 

We shall use Janson Inequality to estimate $\card{\cI_1(S)}$.
Observe that $G$ has $k$ edges and maximum degree at most $2\el$, since $S(A)=S \in \cS(k,\el)$. Let $\mu$ and $\De$ be the quantities defined in the statement of Lemma \ref{lem:Janson-Inequality} and note that we are applying the lemma with $\card{\Ga}=n/5$. We have
\[
\mu=k\cdot \left(\tfrac12\right)^2=k/4 \quad \text{and} \quad \De \le 4k\el \cdot \left(\tfrac{1}{2}\right)^3 =k\el/2.
\]
Accordingly $\mu^2/(8\mu+8\De) \ge k/(96\el)$, and so the number of choices for $A\cap V(G)$ is at most $e^{-k/96\el} 2^{n/5}$. On the other hand, we can pick $A\cap \{2n/5+1,\ldots,n\}$ in at most $2^{3n/5}$ ways. We thus have  
\begin{equation}\label{eq:Janson-or-monochromatic-I}
\card{\cI_1(S)} \le e^{-k/96\el}2^{n/5}\cdot 2^{3n/5}=e^{-k/96\el}2^{4n/5}.
\end{equation}

We proceed to bound $\card{\cI_2(S)}$. For each subset $T \subseteq V(G)$ so that $T$ contains at least $k/8$ edges of $G$, we define $\cI_2(S,T)$ to be the collection of sets $A\in \cI_2(S)$ with $A\cap V(G)=T$. We see immediately that $\cI_2(S)=\bigcup_{T } \cI_2(S,T)$, and so the task is now to estimate $\card{\cI_2(S,T)}$. Observe that a set $A\in \cI_2(S,T)$ is uniquely determined by the intersection of $A$ and $\{2n/5+1,\ldots,n\}$. For this reason we fix $S$ and $T$, and bound the number of ways to choose $A\cap\{2n/5+1,\ldots,n\}$. Since $G$ has maximum degree at most $2\el$, we may select $k/16\el$ disjoint edges in $T=A\cap V(G)$, say $\{x_i,x_i+s_i\}$ with $1\le i \le k/16\el$. Let $B=\{x_i+s_i:1\le i \le k/16\el\}$. Then, $s_i \in A_1\cap \left[\left(\tfrac15-\tfrac{1}{100}\right)n,\tfrac15 n\right]$ by Observation (i), and so $x_i\in A_1\cap \left[\tfrac15 n,\left(\tfrac15+\tfrac{1}{100}\right)n\right]$ due to property $(\al)$ and the fact that $x_i+s_i \le 2n/5$. Hence $B\subseteq 2A_1 \cap \left[\left(\tfrac25-\tfrac{1}{100}\right)n,\tfrac25 n \right]$.
Since $A_1$ is sum-free and $B\subseteq A$, this forces  
\begin{equation}\label{eq:Janson-or-monochromatic-II}
B \subseteq A_2 \cap \left[\left(\tfrac25-\tfrac{1}{100}\right)n,\tfrac25 n\right].
\end{equation}
We thus have
\begin{equation}\label{eq:Janson-or-monochromatic-III}
2B \subseteq \left[\left(\tfrac45-\tfrac{1}{50}\right)n,\tfrac45 n\right], \ \text{and} \ \card{2B} \ge 2\card{B}-1 \ge k/16\el.
\end{equation}
This suggests us splitting $\cI_2(S,T)=\cI'_2(S,T)\dcup \cI^{''}_2(S,T)$, where $\cI'_2(S,T)$ contains every set $A\in \cI_2(S,T)$ with $\card{A\cap 2B} \le \card{2B}/4$.

We consider a set $A \in \cI'_2(S,T)$. Since $\card{A\cap 2B} \le \card{2B}/4$ by the definition of $\cI'_2(S,T)$, we may pick $A\cap 2B$ from the family of all subsets of $2B$ in at most $2^{H(1/4)\card{2B}}$ ways. Thus, noting that $2B\subseteq \{2n/5+1,\ldots,n\}$ by \eqref{eq:Janson-or-monochromatic-III} and that $A$ is determined by $A\cap \{2n/5+1,\ldots,n\}$, we have
\begin{equation}\label{eq:Janson-or-monochromatic-IV}
\card{\cI_2'(S,T)} \le 2^{H(1/4)\card{2B}}\cdot 2^{3n/5-\card{2B}} \overset{\eqref{eq:Janson-or-monochromatic-III}}{\le} 2^{3n/5-k/90\el}.
\end{equation}

Suppose now that $A\in \cI^{''}_2(S,T)$. Evidently there are at most $2^{2n/5}$ ways to choose $A\cap \{2n/5+1,\ldots,4n/5\}$. We shall fix this set and bound the number of possibilities for $A\cap \{4n/5+1,\ldots,n\}$. As $2B \subseteq \{2n/5+1,\ldots,4n/5\}$ by \eqref{eq:Janson-or-monochromatic-III}, $S+(A\cap 2B)$ is already determined. Moreover, it follows from property \eqref{eq:Janson-or-monochromatic-II} and Observation (ii) that $A\cap \{4n/5+1,\ldots,n\}$ and $S+(A\cap 2B)$ are two disjoint subsets of $\{4n/5+1,\ldots,n\}$. Hence there are at most $2^{n/5-\card{S+(A\cap 2B)}} \le 2^{n/5-\card{A\cap 2B}}$ possible outcomes for the set $A\cap \{4n/5+1,\ldots,n\}$. Therefore, we get the estimate
\begin{equation}\label{eq:Janson-or-monochromatic-V}
\card{\cI_2^{''}(S,T)} \le 2^{2n/5}\cdot 2^{n/5-\card{A\cap 2B}} \le 2^{3n/5-\card{2B}/4} \overset{\eqref{eq:Janson-or-monochromatic-III}}{\le} 2^{3n/5-k/64\el},
\end{equation}
in which the second inequality follows from the definition of $\cI_2^{''}(S,T)$.

Combining inequalities \eqref{eq:Janson-or-monochromatic-IV} and \eqref{eq:Janson-or-monochromatic-V} gives
\begin{equation}\label{eq:Janson-or-monochromatic-VI}
\card{\cI_2(S)}=\sum_{T \subseteq \{n/5+1,\ldots 2n/5\}}\left(\card{\cI_2'(S,T)}+\card{\cI_2^{''}(S,T)}\right) \le 2^{1-k/90\el}2^{4n/5}.
\end{equation}

Finally there are at most $\left(\frac{e^2k}{\el^2}\right)^{\el}$ choices for $S \in\cS(k,\el)$ by Lemma \ref{lem:restricted-partitions}, and hence, using \eqref{eq:Janson-or-monochromatic-I} and \eqref{eq:Janson-or-monochromatic-VI}, we can bound the number of sets $A$ from above by
\begin{align*}
\sum_{k \ge \el^2/\de^2}\sum_{S\in \cS(k,\el)}\left(e^{-k/96\el}+2^{1-k/90\el} \right)2^{4n/5} &\le \sum_{k\ge \el^2/\de^2}4\cdot\left(\frac{e^2k}{\el^2}\right)^{\el}e^{-k/130\el}2^{4n/5}\\
& \le 1040 \el \left(\frac{e^2}{\de^2}\right)^{\el}e^{-\el/130\de^2}2^{4n/5} \le e^{-\el}2^{4n/5},
\end{align*}
where the second inequality holds since $g(x)=x^{a}e^{-bx}$ is  decreasing on $[a/b,\infty)$ and $g(x+1/b)<g(x)/2$ for $x \ge 4a/b$. (Note that we have $\el^2/\de^2 \ge 4 \el \cdot 130\el$ since $\de>0$ is sufficiently small.)
\end{proof}

\begin{claim}\label{claim:counting-reduction-III}
For a given fixed $\ell \in \bN$, there are at most $e^{-\el}2^{4n/5}$ sets $A\in \SF_2(n)$ of type $(b)$ that satisfy $(\al)$ and with 
\begin{itemize}
\item[$(\be_1)$] $S(A)\in \cS(k,\el)$ for some $k\le \el^2/\de^2$;
\item[$(\be_2)$] $\card{S(A)+S(A)} \ge \card{S(A)}/\de^2$.
\end{itemize}
\end{claim}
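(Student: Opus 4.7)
The plan is to adapt the structure of the proof of Claim \ref{claim:counting-reduction-II}, but with the largeness of the sumset $L := |S+S| \ge \ell/\delta^2$ taking over the role played by $k$ there. In the present regime $k \le \ell^2/\delta^2$ the Janson argument on the graph $G$ of forbidden monochromatic pairs would not furnish an exponential savings proportional to $\ell/\delta^2$, so the new hypothesis $|S+S| \ge \ell/\delta^2$ has to do the work. The key driver remains Observation (i): $A \cap (S+S) \subseteq A_2 \cap [(2/5-1/50)n, 2n/5]$.

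Concretely, after fixing $S$ I would let $\cI(S)$ denote the family of admissible $A$ with $S(A) = S$ and split $\cI(S) = \cI_1(S) \dcup \cI_2(S)$ according to whether $|A \cap (S+S)| < L/4$ or not. For $\cI_1(S)$, the bound \eqref{entropy} furnishes at most $2^{H(1/4)L}$ options for $A \cap (S+S)$, while the $4n/5 - L$ remaining positions of $\{n/5+1, \ldots, n\} \setminus (S+S)$ are unconstrained, giving $|\cI_1(S)| \le 2^{4n/5 - (1-H(1/4))L}$. For $A \in \cI_2(S)$ I would set $B = A \cap (S+S)$, so that $|B| \ge L/4$, $|2B| \ge L/2-1$, and $B \subseteq A_2 \cap [(2/5-1/50)n, 2n/5]$ by Observation (i); Observation (ii) applied to $X = B$ then tells us that $A \cap \{4n/5+1, \ldots, n\}$ avoids $S+(A\cap 2B)$. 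A secondary split of $\cI_2(S)$ according to whether $|A \cap 2B| \le |2B|/4$ now mirrors the estimates \eqref{eq:Janson-or-monochromatic-IV}--\eqref{eq:Janson-or-monochromatic-V} from Claim \ref{claim:counting-reduction-II}, producing $|\cI_2(S)| \le 2 \cdot 2^{4n/5 - cL}$ for some absolute $c > 0$.

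To close, I would appeal to Lemma \ref{lem:restricted-partitions}: the total number of $S \in \cS(k, \ell)$ with $k \le \ell^2/\delta^2$ is at most
\[
\sum_{k \le \ell^2/\delta^2} p^*_\ell(k) \le \frac{\ell^2}{\delta^2}\left(\frac{e^2}{\delta^2}\right)^{\ell}.
\]
Multiplying by $|\cI(S)| \le 3 \cdot 2^{4n/5 - c \ell/\delta^2}$ and choosing $\delta > 0$ small enough that $2^{-c\ell/\delta^2} \cdot (e^2/\delta^2)^{\ell} \le e^{-2\ell}$ (which is possible because $\ell/\delta^2$ grows like $1/\delta^2$ while $\log(e^2/\delta^2)$ only grows like $\log(1/\delta)$), we obtain the desired bound $e^{-\ell} 2^{4n/5}$. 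The main subtlety, and the point that mirrors the most delicate step in the proof of Claim \ref{claim:counting-reduction-II}, is ensuring that the loss in each of the two sub-cases of $\cI_2(S)$ is genuinely linear in $L$ rather than merely in $\sqrt{L}$; this is guaranteed by the bound $|2B| \ge L/2 - 1$ combined with the disjointness provided by Observation (ii).
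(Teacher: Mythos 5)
Your proposal follows essentially the same route as the paper's own argument: you use the identical Observation, identical split of $\cI(S)$ by the size of $A\cap (S+S)$ and then by the size of $A\cap 2B$, and you close with Lemma~\ref{lem:restricted-partitions} exactly as the paper does. The only cosmetic difference is that you invoke Cauchy--Davenport $\card{2B}\ge 2\card{B}-1$ where the paper uses the cruder $\card{2T}\ge\card{T}$; both give a linear-in-$L$ saving, so this changes nothing. One small point to tighten in a write-up: when you say you split $\cI_2(S)$ by the size of $A\cap 2B$, you should make explicit (as the paper does via the intermediate family $\cI_2(S,T)$) that you are also summing over the $\le 2^{\card{2S}}$ possible choices of $B$, and that this factor is exactly cancelled by the $2^{-\card{2S}}$ saving from having already pinned down $A\cap 2S = B$; your final inequality $3\tfrac{\el^2}{\de^2}(e^2/\de^2)^{\el}2^{-c\el/\de^2}\le e^{-\el}$ also silently needs the polynomial prefactor $\el^2/\de^2$ to be absorbed, which it is for $\de$ small but deserves a line of justification.
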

\begin{proof}
The proof is similar in spirit to that of Claim \ref{claim:counting-reduction-II}. Fixing an integer $k$ with $k\le \el^2/\de^2$ and a set $S\in \cS(k,\el)$ with $\card{2S} \ge \el/\de^2$, we denote by $\cI(S)$ the collection of all sets $A \in \SF_2(n)$ of type $(b)$ that satisfy $(\al)$ and with $S(A)=S$. Further partition $\cI(S)=\cI_1(S)\dcup\cI_2(S)$, where $\cI_1(S)$ consists of all sets $A\in \cI(S)$ with $\card{A\cap 2S} \le \card{2S}/4$. 

We first count $\cI_1(S)$. Notice that $2S\subseteq \{n/5+1,\ldots,2n/5\}$ due to Observation (i), and $\card{A\cap 2S}\le \card{2S}/4$ by the definition of $\cI_1(S)$. From this we deduce that there are no more than $2^{n/5-\card{2S}}2^{H(1/4)\card{2S}}$ choices for $A\cap \{n/5+1,\ldots,2n/5\}$. Since we can take $A\cap \{2n/5+1,\ldots,n\}$ in at most $2^{3n/5}$ possible ways, it follows that
\begin{equation}\label{eq:reduction-III-1}
\card{\cI_1(S)} \le 2^{n/5-\card{2S}}2^{H(1/4)\card{2S}}\cdot 2^{3n/5} \le 2^{4n/5-\el/6\de^2}
\end{equation}
for $\card{2S} \ge \el/\de^2$.

We next deal with $\cI_2(S)$. For each subset $T\subseteq 2S$ with $\card{T} \ge \card{2S}/4$, we define $\cI_2(S,T)$ to be the collection of sets $A\in \cI_2(S)$ with $A\cap 2S=T$. We shall fix such a set $T$ and further partition $\cI_2(S,T)=\cI'_2(S,T)\dcup \cI^{''}_2(S,T)$, in which $\cI'_2(S,T)$ consists of sets $A \in \cI_2(S,T)$ with $\card{A\cap 2T} \le \card{2T}/4$.
Note that 
\begin{equation}\label{eq:reduction-III-2}
\card{2T} \ge \card{T} \ge \card{2S}/4 \ge \el/4\de^2.
\end{equation} 

Suppose first that $A \in\cI'_2(S,T)$. Then $\card{A\cap 2T}\le \card{2T}/4$ by the definition of $\cI'_2(S,T)$, and so we can choose $A\cap 2T$ in at most $2^{H(1/4)\card{2T}}$ ways. Moreover, by Observation (i) we have  
$2S \subseteq \{n/5+1,\ldots,2n/5\}$ and $2T \subseteq 4S \subseteq \{2n/5+1,\ldots,4n/5\}$. So there are at most $2^{4n/5-\card{2S}-\card{2T}}$ possibilities for $A\setminus \left(S\cup 2S\cup 2T\right)$. (Recall that $S=A\cap [n/5]$.) We therefore obtain
\begin{equation}\label{eq:reduction-III-3}
\card{\cI'_2(S,T)} \le 2^{H(1/4)\card{2T}}\cdot 2^{4n/5-\card{2S}-\card{2T}}\overset{\eqref{eq:reduction-III-2}}{\le} 2^{4n/5-\card{2S}-\el/22\de^2}.
\end{equation}

Suppose now that $A\in \cI^{''}_2(S,T)$. Since $2S \subseteq \{n/5+1,\ldots,2n/5\}$ by Observation (i), and the sets $S=A\cap [n/5]$ and $T=A\cap 2S$ have been chosen, we see that $A$ is uniquely determined by $A\cap \left(\{n/5+1,\ldots,4n/5\}\setminus 2S\right)$ and $A\cap \{4n/5+1,\ldots,n\}$. We can trivially bound the number of choices for $A\cap \left(\{n/5+1,\ldots,4n/5\}\setminus 2S\right)$ by $2^{3n/5-\card{2S}}$. We shall fix this set and bound the number of ways to choose $A\cap \{4n/5+1,\ldots,n\}$. Note that fixing $A\cap [4n/5]$ determines $S+(A\cap 2T)$. Furthermore, we know from Observation (i) that $T=A\cap 2S$ is contained in $A_2\cap \left[\left(\tfrac25-\tfrac{1}{50}\right)n,\tfrac25 n\right]$, and consequently $A\cap \{4n/5+1,\ldots,n\}$ and $S+(A\cap 2T)$ are disjoint subsets of $\{4n/5+1,\ldots,n\}$ due to Observation (ii). 
Hence we can assign $A\cap \{4n/5+1,\ldots,n\}$ in at most $2^{n/5-\card{S+(A\cap 2T)}} \le 2^{n/5-\card{A\cap 2T}}$ possible ways, as $S\ne \emptyset$. Putting everything together we get
\begin{equation}\label{eq:reduction-III-4}
\card{\cI^{''}_2(S,T)} \le 2^{3n/5-\card{2S}}\cdot 2^{n/5-\card{A\cap 2T}} \le 2^{4n/5-\card{2S}-\card{2T}/4} \overset{\eqref{eq:reduction-III-2}}{\le} 2^{4n/5-\card{2S}-\el/16\de^2},
\end{equation}
where the second inequality holds since $\card{A\cap 2T} \ge \card{2T}/4$ by the definition of $\cI^{''}_2(S,T)$. 

Using inequalities \eqref{eq:reduction-III-3} and \eqref{eq:reduction-III-4} yields
\begin{equation}\label{eq:reduction-III-5}
\card{\cI_2(S)}=\sum_{T\subseteq 2S}\left(\card{\cI'_2(S,T)}+\card{\cI^{''}_2(S,T)}\right) \le 2^{1-\el/22\de^2}2^{4n/5}.
\end{equation}

Finally adding inequalities \eqref{eq:reduction-III-1} and \eqref{eq:reduction-III-5}, and summing over all $S$, we get the following bound on the number of sets $A$:
\begin{align*}
\sum_{k \le \el^2/\de^2}\sum_{S \in \cS(k,\el)}\left(2^{-\el/6\de^2}+2^{1-\el/22\de^2}\right)2^{4n/5}&\le \sum_{k\le \el^2/\de^2}\left(\frac{e^2k}{\el^2}\right)^{\el}2^{2-\el/22\de^2}2^{4n/5}\\
& \le \frac{\el^2}{\de^2} \cdot \left(\frac{e^2}{\de^2}\right)^{\el}2^{2-\el/22\de^2}2^{4n/5} \le e^{-\el}2^{4n/5},
\end{align*}
where the first inequality holds since $\card{\cS(k,\el)} \le \left(\frac{e^2k}{\el^2}\right)^{\el}$ due to Lemma \ref{lem:restricted-partitions}.
\end{proof}

The following claim now completes the proof of Lemma \ref{lem:continuous-containers}.
\begin{claim}\label{claim:counting-reduction-IV}
There exists an absolute constant $\el_0$ so that for every integer $\el \ge \el_0$ there are at most $e^{-\el/5}2^{4n/5}$ sets $A\in \SF_2(n)$ of type $(b)$ which satisfy $(\al)$ and with 
\begin{itemize}
	\item[$(\ga_1)$] $S(A)\in \cS(k,\el)$ for some $k\le \el^2/\de^2$;
	\item[$(\ga_2)$] $\card{S(A)+S(A)} \le \card{S(A)}/\de^2$.
\end{itemize}
\end{claim}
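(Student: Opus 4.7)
The plan is to parallel the proof of Claim \ref{claim:counting-reduction-III} almost verbatim, with Lemma \ref{lem:Green-Morris} (Green--Morris) replacing Lemma \ref{lem:restricted-partitions} (restricted partitions) as the tool for counting admissible $S$. First I fix $S$ and bound $\card{\cI(S)}$ by the same three-way case split: partition $\cI(S)=\cI_1(S)\cup\cI_2(S)$ according to whether $\card{A\cap 2S}\le \card{2S}/4$, and inside $\cI_2(S)$ set $T=A\cap 2S$ and partition further by whether $\card{A\cap 2T}\le \card{2T}/4$ or not. Observations (i) and (ii), together with the entropy estimates \eqref{entropy} and Cauchy-Davenport \eqref{Cauchy-Davenport}, will yield the analogues of the bounds
\[
\card{\cI_1(S)}\le 2^{4n/5-\card{2S}/6},\ \card{\cI'_2(S,T)}\le 2^{4n/5-\card{2S}-\card{2T}/6},\ \card{\cI''_2(S,T)}\le 2^{4n/5-\card{2S}-\card{2T}/4};
\]
summing over $T\subseteq 2S$ with $\card{T}>\card{2S}/4$ and using $\card{2S}\ge 2\el-1$ then gives $\card{\cI(S)}\le 4\cdot 2^{4n/5-\card{2S}/24}$.

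Next I count the admissible $S$. Condition $(\ga_1)$ puts $S\subseteq[n/5-k,n/5]$, an interval of length $D=k+1\le \el^2/\de^2+1$, while $(\ga_2)$ says $\card{S+S}\le R\cdot\card{S}$ for $R=1/\de^2$. Lemma \ref{lem:Green-Morris} (applicable once $\el\ge s_0(\de,R)$, which defines the absolute constant $\el_0$) then gives
\[
\sum_{k\le\el^2/\de^2}\card{\{S\in \cS(k,\el):(\ga_2)\text{ holds}\}}\le \frac{\el^2}{\de^2}\cdot 2^{\de\el}\binom{\el/(2\de^2)}{\el}D^{\floor{1/\de^2+\de}}.
\]
Multiplying this with the per-$S$ bound and summing produces a total of the form $G(\el,\de)\cdot 2^{4n/5-\el/12}$, where $G(\el,\de)$ denotes the Green--Morris count above.

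The main obstacle is calibrating constants so that $G(\el,\de)\cdot 2^{-\el/12}\le e^{-\el/5}$ in the regime of very small $\de$: since $\binom{\el/(2\de^2)}{\el}\le (e/(2\de^2))^\el$, the Green--Morris factor alone grows like $(1/\de^2)^\el$, which might not be absorbed by a naive per-$S$ saving of $2^{-\Omega(\el)}$. I expect the resolution is to iterate the $2S\to 2T$ refinement to higher levels $T_i=A\cap 2T_{i-1}$ (with $T_0=S$), whose sizes remain polynomially controlled by $\card{S+S}$ via Pl\"unnecke's inequality (Lemma \ref{lem:Plunnecke}); this should boost the aggregate per-$S$ saving to $2^{-\Omega(\el\log(1/\de))}$, which absorbs the Green--Morris factor and delivers the desired bound $e^{-\el/5}2^{4n/5}$ provided $\de$ was taken sufficiently small at the outset of the proof of Lemma \ref{lem:continuous-containers}.
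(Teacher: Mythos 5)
Your proposal correctly identifies the key obstacle — that a direct application of Lemma~\ref{lem:Green-Morris} with $R=1/\de^2$ gives a count of admissible $S$ of size roughly $\binom{\el/(2\de^2)}{\el}\approx(e/(2\de^2))^\el$, which a per-$S$ saving of only $2^{-\Omega(\el)}$ cannot absorb when $\de$ is small. However, the fix you propose is not the one the paper uses, and it is unlikely to work. Iterating the $2S\to 2T$ refinement to further levels $T_i=A\cap 2T_{i-1}$ runs into the problem that $2T_i$ soon escapes $[n]$ (already $2T\subseteq 4S$ sits near $4n/5$, so $2T_2\subseteq 8S$ would exceed $n$), and even disregarding that, there is no guarantee that $\card{A\cap 2T_{i-1}}$ stays a constant fraction of $\card{2T_{i-1}}$ at every level, so the per-level savings do not obviously compound to $2^{-\Omega(\el\log(1/\de))}$.

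The paper's actual resolution is different: it slices the range of the doubling constant $\card{2S}/\card{S}$ into multiplicatively spaced scales $[\la,(1+\de)\la]$ with $\la=(2-\de)(1+\de)^i$, $0\le i\le \frac{3}{\de}\ln\frac{1}{\de}$. Within the $\la$-slice, Lemma~\ref{lem:Green-Morris} is applied with $R=(1+\de)\la$ rather than $R=1/\de^2$, giving a count of about $2^{O(\de\el)}\binom{(1+\de)\la\el/2}{\el}\le 2^{O(\de\el)}2^{(1+\de)\la\el/2}$ choices of $S$; simultaneously, within that slice one has $\card{2S}\ge\la\el$, so the per-$S$ saving is at least $2^{-(1-H(1/20))\la\el}\le 2^{-2\la\el/3}$. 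The exponents $(1+\de)\la\el/2$ and $-2\la\el/3$ now balance at every scale $\la$, giving a net gain of roughly $2^{-\la\el/6}\le 2^{-\el/3}$ per slice, after which the $O_\de(\log(1/\de))$ slices are summed harmlessly. The same slicing is then done a second time for $\card{2T}/\card{T}$ inside the hard subcase, and the final step is an explicit optimization of a four-variable entropy expression. This scale-matching between the Green--Morris count and the Janson/entropy-type saving is the crucial idea your proof is missing; a uniform application of Green--Morris at the coarsest scale $R=1/\de^2$ throws away exactly the information needed to close the argument.

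A minor secondary point: the paper uses the threshold $\card{A\cap 2S}\le\card{2S}/20$ rather than your $/4$, precisely because it needs the sharper constant $1-H(1/20)>2/3$ in the per-$S$ saving in order for the balance against the Green--Morris exponent $(1+\de)\la/2$ to come out strictly negative.
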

\begin{proof}
This is the most difficult case, and we shall have to count more carefully, using Lemma \ref{lem:Green-Morris}.
For each $k \in \bN$ and $\la>0$, let $\cS_{(\la)}(k,\el)$ denote the collection of sets $S\in \cS(k,\el)$ such that
\[
\la \card{S} \le \card{2S} \le (1+\de)\la\card{S}.
\]
Given $S\in \cS_{(\la)}(k,\el)$, we denote by $\cI(S)$ the collection of all sets $A\in \SF_2(n)$ of type $(b)$ that satisfy $(\al)$ and with $S(A)=S$. It is not hard to see that the number of sets $A$ that satisfies the hypothesis of Claim \ref{claim:counting-reduction-IV} is bounded from above by $\sum\card{\cI(S)}$, where the sum is taken over all triples $(k,\la,S)$ with $k \le \el^2/\de^2$, $\la=(2-\de)(1+\de)^{i}$ for some integer $i$ with $0\le i\le \frac{3}{\de} \ln \frac{1}{\de}$, and $S\in \cS_{(\la)}(k,\el)$. To count $\cI(S)$, we partition $\cI(S)=\cJ(S)\dcup \cK(S)$, in which $\cJ(S)$ consists of all sets $A\in \cI(S)$ with $\card{A\cap 2S} \le \card{2S}/20$. 

We shall use Lemma \ref{lem:Green-Morris} to count the number of triples $(k,\la,S)$. As noted above, there are only $O_{\de}(\el^2)$ choices for $k$ and $\la$; this will be absorbed by the error term $2^{O(\de \el)}$. We may apply Lemma \ref{lem:Green-Morris} to $R=(1+\de)\la$, $s=\el$ and $D=k$, and conclude that there are at most $2^{O(\de \el)}\binom{(1+\de)\la \el/2}{\el}$ choices for $S \in \cS_{(\la)}(k,\el)$. (Note that $(1+\de)\la=O_{\de}(1)$, $k=O_{\de}(\el^2)$ and $\el$ is sufficiently large.)

We are now ready to estimate the sum $\sum_{(k,\la,S)}\card{\cJ(S)}$. Analysis similar to that in the proof of Claim \ref{claim:counting-reduction-III} shows
\begin{equation*}
\card{\cJ(S)}\le 2^{4n/5-\card{2S}}2^{H(1/20)\card{2S}} \le 2^{4n/5-2\la \el/3}
\end{equation*}
since $\card{2S} \ge \la \el$ for all $S\in \cS_{(\la)}(k,\el)$. Summing over all choices of $(k,\la,S)$, and recalling that $\la=(2-\de)(1+\de)^{i} \ge 2-\de$, we thus get
\begin{align}\label{reduction-III-1}\notag
\sum_{(k,\la,S)}\card{\cJ(S)} &\le \sum_{\la}2^{O(\de \el)}\binom{(1+\de)\la \el/2}{\el}2^{4n/5-2\la \el/3}\\
& \le \sum_{\la}2^{O(\de \el)}2^{(1+\de)\la \el/2}2^{4n/5-2\la \el/3} \le 2^{4n/5-0.33\el}.
\end{align}

We proceed to bound the sum $\sum_{(k,\la,S)}\card{\cK(S)}$.
For each $p\in \bN$ and $\mu>0$, let $\cT^{(\mu)}(S,p)$ be the collection of sets $T \subseteq 2S$ with
\[
\card{T}=p, \ \text{and} \ \mu \card{T} \le \card{2T} \le (1+\de)\mu\card{T}.
\]
For any set $T\in \cT^{(\mu)}(S,p)$ and any integer $q$ with $0 \le q \le \card{2T}$, let $\cK(T,q)$ stand for the collection of  those sets $A\in \cK(S)$ with $A\cap 2S=T$ and $\card{A\cap 2T}=q$. From the definition of $\cK(S)$, we know that $\card{2S}/20 \le \card{T} \le \card{2S}$ (otherwise $\cK(T,q)=\emptyset$), and so $\la \el/20 \le p \le (1+\de)\la \el$. Moreover, as $\card{2S} \le \card{S}/\de^4$ by our choice of $\la$, Lemma \ref{lem:Plunnecke} implies $\card{4S} \le \card{S}/\de^{16}$, giving $\card{2T} \le \card{4S} \le \card{2S}/\de^{16} \le 20\card{T}/\de^{16}$. Accordingly we only need to care about those $\mu$ so that $\mu=(2-\de)(1+\de)^j$ for some integer $j$ with $0 \le j \le \frac{17}{\de} \ln \frac{1}{\de}$.
Summarizing, we have 
\[
\card{\cK(S)} \le \sum_{(p,\mu,T,q)}\card{\cK(T,q)},
\] 
where the sum is over all quadruples $(p,\mu,T,q)$ such that $\la \el/20 \le p \le (1+\de)\la \el$, $\mu=(2-\de)(1+\de)^j$ for some integer $j$ with $0 \le j  \le \frac{17}{\de} \ln \frac{1}{\de}$, $T\in \cT^{(\mu)}(S,p)$, and $q \le \card{2T}$.

From the previous discussion, we deduce that there are only $O_{\de}(p)$ choices for $p$ and $\mu$; this will be absorbed by the error term $2^{O(\de p)}$. Using Lemma \ref{lem:Green-Morris} with $R=(1+\de)\mu$, $s=p$ and $D=2k$, we find that there are at most $2^{O(\de p)}\binom{(1+\de)\mu p/2}{p}$ choices for $T\in \cT^{(\mu)}(S,p)$. 
Since $q \le \card{2T} \le (1+\de)\mu p$, we have only $O_{\de}(p)$ possibilities for $q$, and this will also be absorbed by the error term $2^{O(\de p)}$.  

We are reduced to enumerating $\cK(T,q)$ for fixed $T\in \cT^{(\mu)}(S,p)$ and $q \le (1+\de)\mu p$. Since $\card{A\cap 2T}=q$, there are at most $\binom{\card{2T}}{q}$ choices for $A\cap 2T$. In addition, since $2S$ and $2T$ are disjoint subsets of $\{2n/5+1,\ldots,4n/5\}$ due to Observation (i), we can allocate $A\cap \left(\{2n/5+1,\ldots,n\}\setminus (2S\cup 2T)\right)$ in at most $2^{3n/5-\card{2S}-\card{2T}}$ possible ways. Furthermore, specifying $A\cap [4n/5]$ determines $S+(A\cap 2T)$. As $A\cap \{4n/5+1,\ldots, n\}$ and $S+(A\cap 2T)$ are disjoint subsets of $\{4n/5+1,\ldots, n\}$ by Observation (ii), this implies that there are at most $2^{n/5-\card{S+(A\cap 2T)}} \le 2^{n/5-q}$ possibilities for $A\cap \{4n/5+1,\ldots,n\}$. 
Here we evaluate $\card{S+(A\cap 2T)} \ge \card{A\cap 2T}=q$ for $S\ne \emptyset$. Therefore, recalling that $\card{2S} \ge \la \el$ and $\mu p \le \card{2T} \le (1+\de)\mu p$, we get
\begin{equation*}
\card{\cK(T,q)} \le\binom{\card{2T}}{q}\cdot 2^{3n/5-\card{2S}-\card{2T}} \cdot 2^{n/5-q} \le \binom{(1+\de)\mu p}{q} 2^{4n/5-\la\el-\mu p-q}.
\end{equation*}

From what has already been proved we may bound $\sum_{(k,\la,S)} \card{\cK(S)}$ from above by
\begin{align*}
&\sum_{(k,\la,S)}\sum_{(p,\mu,T,q)}\card{\cK(T,q)} \\
&\le \sum_{(\la,p,\mu,q)}2^{O(\de \el)}\binom{(1+\de)\la \el/2}{\el}\cdot 2^{O(\de p)}\binom{(1+\de)\mu p/2}{p}\cdot\binom{(1+\de)\mu p}{q} 2^{4n/5-\la\el-\mu p-q}\\
& \le 2^{4n/5} \cdot \max_{(\la,p,\mu,q)}\left \{\binom{(1+\de)\la \el/2}{\el}2^{-\la \el} \cdot 2^{O(\de p)}\binom{(1+\de)\mu p/2}{p}2^{-\mu p}\cdot \binom{(1+\de)\mu p}{q}2^{-q} \right\},
\end{align*}
where in the last inequality we used the fact that the term $2^{O(\de \el)}$ is absorbed by the error term $2^{O(\de p)}$. We shall deploy the entropy estimate \eqref{entropy} to control the last expression. For abbreviation, set $x=(1+\de)\la/2$, $y= (1+\de)\mu/2$, $z=q/(1+\de)\mu p$, and $\mathbb{D}=\{(x,p,y,z)\in \bR^4: x \ge 1, 0 \le p \le 2x\el, y \ge 1, 0 \le z \le 1\}$.
Recalling that $\la, \mu \ge 2-\de$, $0\le p \le (1+\de)\la \el$ and $0 \le q \le (1+\de)\mu p$, we then have $(x,p,y,z) \in \mathbb{D}$. Now using inequality \eqref{entropy} and simplifying yields  
\begin{equation*}
\sum_{(k,\la,S)}\card{\cK(S)} \le  2^{4n/5} \cdot \max_{(x,p,y,z)\in \mathbb{D}}2^{h(x,p,y,z)},
\end{equation*}
where $h(x,p,y,z):=\left(xH(\tfrac1x)-\frac{2x}{1+\de}\right)\el+\left(yH(\tfrac1y)-\frac{2y}{1+\de}+O(\de)\right)p+ 2yp\cdot (H(z)-z)$.
A straightforward but slightly tedious calculation shows that the maximum value of $h(x,p,y,z)$ on $\mathbb{D}$ is $\left(\log_2\left(\frac{81}{115}\right)+O(\de)\right)\el \approx -0.505 \el$, attained at $z=\frac13$, $y=\frac{16}{7}+O(\de)$, $p=2x\el$ and $x=\frac{196}{115}+O(\de)$.\footnote{We can solve this optimisation problem backwardly using the following simple facts. Firstly, the function $f(z)=H(z)-z$ achieves its maximum at $z=1/3$.
Secondly, given $\rho>0$, the function $g(t)=tH(\tfrac1t)-\rho t$ is maximised at $t=2^{\rho}/(2^{\rho}-1)$.} Hence
\begin{equation}\label{reduction-III-2}
\sum_{(k,\la,S)}\card{\cK(S)} \le  2^{4n/5}\cdot 2^{-0.5\el}=2^{4n/5-0.5\el}.
\end{equation}

Finally adding inequalities \eqref{reduction-III-1} and \eqref{reduction-III-2}, and summing over all triples $(k,\la,S)$, we conclude that the number of sets $A$ is at most
\[
\sum_{(k,\la,S)} \card{\cI(S)} \le \sum_{(k,\la,S)}\left(\card{\cJ(S)}+\card{\cK(S)}\right) \le \left(2^{-0.33\el}+2^{-0.5\el}\right)2^{4n/5} \le e^{-\el/5}2^{4n/5}.\qedhere
\]
\end{proof}
The proof of Lemma \ref{lem:continuous-containers} is at long last complete.
\end{proof}

\section{Concluding remarks}\label{sec:remarks}
In this paper we studied the general structure of large sum-free sets of integers. From this we obtained a good bound on the total number of $2$-wise sum-free subsets of $[n]$. It is likely that our methods extend to give an asymptotic formula for this number, but we do not pursue this here. We close with some remarks
and possible directions for further research.

\subsection*{Sets with small difference constant}
The main open problem is to determine the critical density
threshold at which Theorem \ref{thm:structure} ceases to hold. Note that in the theorem, the value for $c$ given by our argument is something like $10^{-6} c_{\ref{thm:Jin}}^2$, where $c_{\ref{thm:Jin}}$ is the absolute positive constant from Jin's inverse theorem (Lemma \ref{thm:Jin}). Note that Jin obtained his result via non-standard analysis, and thus no explicit value of $c_{\ref{thm:Jin}}$ can be extracted from his proof.
Using the following conjecture instead of Lemma \ref{thm:Jin}, we would certainly get a reasonable value for $c$.

\begin{conj}\label{conj:small-difference}
There exists a natural number $K$ such that for any finite set of integers $A$ so that $\card{A} \ge K$ and $\card{A-A}=3\card{A}-3+r$ for some integer $r$ with $0<r <\tfrac13\card{A}-2$, one of the following properties holds:
\begin{itemize}
	\item[(i)] $A$ is a subset of an arithmetic progression of length $2\card{A}-1+2r$;
	\item[(ii)] $A \subseteq P_1\cup P_2$ for some arithmetic progressions $P_1,P_2$ with common step and $\card{P_1}+\card{P_2} \le \card{A}+r$.   
\end{itemize}
\end{conj}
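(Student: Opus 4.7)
My proof strategy is to mirror Jin's approach to the sumset inverse problem (Lemma \ref{thm:Jin}) while exploiting the symmetry $A-A=-(A-A)$. The plan breaks into three phases.

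\textbf{Normalization.} After translation and dilation, assume $\min(A)=0$ and $d(A)=1$; write $N=\max(A)$. Since $A-A$ is symmetric about $0$, we have $|(A-A)_{+}|=(3|A|-4+r)/2$, and $A\setminus\{0\}\subseteq (A-A)_{+}\subseteq [1,N]$.

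\textbf{Case split via range.} In the small-range regime $N\le 2|A|+r-2$, the interval $[0,N]$ has at most $2|A|+r-1\le 2|A|+2r-1$ integers, so $A$ itself lies in an AP of step $1$ and length at most $2|A|+2r-1$, establishing case (i). In the complementary large-range regime $N\ge 2|A|+r-1$, Lemma \ref{lem:Lev-Smeliansky} is consistent with the hypothesis only through its second branch $|(A-A)_{+}|\ge \tfrac{3}{2}|A|-2$ and therefore gives no control on $N$. The task is then to extract the two-AP structure from the hypothesis alone. Heuristically, if $A$ is spread over an interval substantially longer than $2|A|+2r$, then unless $A$ clusters into two AP-like pieces, its difference set should force $|A-A|>3|A|-3+r$, contradicting the hypothesis.

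\textbf{From two APs to the size bound.} Assuming $A\subseteq P_1\cup P_2$ with $P_i$ arithmetic progressions of common step $d'$, the condition $d(A)=1$ forces $d'=1$ (else $d'\mid d(A)$), so the $P_i$ are intervals of lengths $L_1,L_2$. Taking the natural configuration $P_1=[0,L_1-1]$ and $P_2=[M,M+L_2-1]$ with $M$ large enough that the four pairwise difference sets $P_i-P_j$ are essentially disjoint inside $A-A$, one computes
\[
\bigl|(P_1\cup P_2)-(P_1\cup P_2)\bigr|=3(L_1+L_2)+|L_1-L_2|-3.
\]
Writing $s=|P_1|+|P_2|-|A|$ for the slack, this yields $|A-A|\ge 3|A|+|L_1-L_2|-3-O(s)$. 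Equating with $|A-A|=3|A|+r-3$ and bounding the $O(s)$ term pins down $|L_1-L_2|+s\le r$, which translates into $|P_1|+|P_2|\le|A|+r$, establishing case (ii).

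\textbf{Main obstacle.} The crux is the inference, in the large-range regime, that $A$ is covered by a union of two APs with a common step; equivalently, a strengthening of Lemmas \ref{thm:Lev-Smeliansky}--\ref{lem:Lev-Smeliansky} saying that if the difference-set $3k-4$ bound fails by a small margin $r$, then $A$ must decompose into two APs. This is the exact difference-set analog of the passage from Freiman's $3k-4$ theorem to Jin's $(3k-3+r)$-theorem (Lemma \ref{thm:Jin}), which Jin proved by non-standard analysis and from which no explicit value of $c_{\ref{thm:Jin}}$ has been extracted. Two natural routes suggest themselves: (i) adapt Jin's model-theoretic argument to the symmetric setting of $A-A$, where the involution $a\mapsto -a$ both collapses and creates cases relative to the sumset; or (ii) attempt a combinatorial proof via iterated compressions following the Freiman--Stanchescu extensions of $3k-4$. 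Route (ii) appears more promising because the $A-A=-(A-A)$ symmetry simplifies compression arguments and would simultaneously yield an effective constant in the conjecture.
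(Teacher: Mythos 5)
This statement is \emph{Conjecture}~\ref{conj:small-difference}, presented in the concluding remarks as an open problem; the paper offers no proof of it. Indeed the paper explains why it is open: it is the difference-set analogue of Jin's $3k-3+r$ theorem (Lemma~\ref{thm:Jin}), whose only known proof is non-effective and non-standard-analytic, and the paper explicitly poses the conjecture as a route to extract an effective constant in Theorem~\ref{thm:structure}. There is therefore no ``paper's own proof'' to compare your argument against.

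Your proposal, on its own terms, does not constitute a proof, and you candidly say so: in the large-range regime $N\ge 2|A|+r-1$ you acknowledge that ``the crux is the inference \dots\ that $A$ is covered by a union of two APs with a common step,'' and you offer two ``natural routes'' (adapting Jin's model-theoretic argument, or a compression argument \`a la Freiman--Stanchescu) without carrying either one out. This crux is precisely the content of the conjecture; what you have supplied around it is a normalization, a correct handling of the trivial small-range case, and a plausibility check, not a derivation. Two further issues in the plausibility check deserve flagging. First, the passage ``From two APs to the size bound'' argues in the wrong direction: the identity $\bigl|(P_1\cup P_2)-(P_1\cup P_2)\bigr|=3(L_1+L_2)+|L_1-L_2|-3$ is an \emph{upper} bound for $|A-A|$ when $A\subseteq P_1\cup P_2$, whereas to pin down $L_1+L_2$ from the hypothesis you need a \emph{lower} bound on $|A-A|$ in terms of $L_1,L_2$; such a lower bound is not automatic and requires using that $A$ is suitably dense in each $P_i$, together with Cauchy--Davenport-type estimates for $(A\cap P_i)-(A\cap P_j)$. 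Second, the claim that $d(A)=1$ forces the common step $d'$ of $P_1,P_2$ to be $1$ is false in general: if $P_1,P_2$ both have step $d'\ge 2$ but lie in distinct residue classes modulo $d'$, elements of $A$ drawn from both can have coprime differences, so $d(A)=1$ is compatible with $d'>1$; this is exactly the situation exploited in Case~3 of the proof of Claim~\ref{claim:small-a0-structure-II}. So even the reduction to intervals is not sound. None of this is fatal to the \emph{strategy}, which is the expected one, but as written the proposal leaves the essential inverse step unproved and contains local errors in the surrounding bookkeeping.
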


We remark that the sumset version of Conjecture \ref{conj:small-difference}  was proposed by Freiman \cite{Freiman60}. The following example shows that the condition $r<\tfrac13 \card{A}-2$ is necessary. 

\begin{example}
Let $y\ge 4x$, and consider the set $A=\{0,y,2y\}+[0,x-1]$. We have $A-A=\{0,\pm y,\pm 2y\}+[-x+1,x-1]$, and so $\card{A-A}=10x-5=(3\card{A}-3)+(\tfrac13 \card{A}-2)$. 
But $A$ is neither a subset of an arithmetic progression of length $(2\card{A}-1)+2\cdot (\tfrac13\card{A}-2)$ nor a subset of an union of two arithmetic progressions of total length $\card{A}+(\tfrac13\card{A}-2)$. 
\end{example}

It is worth mentioning that Eberhard, Green and Manners \cite{EGM14} provided a rough structure theorem for sets of integers of difference constant less than $4$. Specifically, they proved that if $A$ is a subset of $\bZ$ with $\card{A-A}\le (4-\eps)\card{A}$ then $A$ has density at least $\tfrac12 +2^{-1000}\eps$ on some arithmetic progression of length $\gg_{\eps} \card{A}$. They then used this result to show the existence of a set of $n$ positive integers with no sum-free subset of size greater than $\tfrac13 n+o(n)$, answering a famous question of Erd\H{o}s \cite{Erdos65} from 1965.

\subsection*{Union of intersecting families}
One can pursue the following general questions for any {\bf monotone property} $\cP$:
\begin{itemize}
	\item[(i)] What is the maximum size of a union of $r$ objects with property $\cP$?
	\item[(ii)] How many objects which can be partitioned into $r$ subobjects having property $\cP$ are there?
\end{itemize}
In this paper, we addressed the second question for the sum-free property. In what follows, we shall single out another monotone property for further research. 

A family of sets is called {\bf intersecting} if it does not contain two disjoint sets. Given a positive integer $r$, a family $\cF$ is said to be {\bf $r$-wise intersecting} if there exists a partition of $\cF$ into $r$ intersecting families.
Let $\cI_r(n,k)$ denote the collection of all $r$-wise intersecting families $\cF\subseteq \binom{[n]}{k}$.
The celebrated Erd\H{o}s-Ko-Rado theorem from 1961 states that for $n\ge 2k$ the largest member of $\cI_1(n,k)$ has size $\binom{n-1}{k-1}$. Recently Ellis and Lifshitz \cite{EL17} considered the problem, first raised by Erd\H{o}s \cite{Erdos73}, of determining the maximum possible size of a family in $\cI_r(n,k)$ when $r\ge 2$. Specifically, they showed $\card{\cF} \le \binom{n}{k}-\binom{n-r}{k}$ for any $\cF \in \cI_r(n,k)$ provided that $r\ge 2$ and $n\ge 2k+C(r)k^{2/3}$, with equality holds if and only if $\cF=\left\{F\in \binom{[n]}{k}:F\cap R\ne \emptyset\right\}$ for some $R\in \binom{[n]}{r}$. In the case $r=2$, this significantly improves a previous result due to Frankl and F{\"u}redi \cite{FF86}. It would be interesting to determine whether  $C(r)k^{2/3}$ is the best possible error term. Note that an example given by Frankl and F{\"u}redi \cite{FF86} shows that this term cannot be reduced to below $\sqrt{k}$.

The problem of enumerating $\cI_1(n,k)$ was first investigated by Balogh, Das, Delcourt, Liu and Sharifzadeh \cite{BDDLS15}. Building on the  of Balogh et al., Frankl and Kupavskii
\cite{FK17} and, independently, Balogh, Das, Liu, Sharifzadeh and Tran \cite{BDLST17} established the asymptotic formula $\card{\cI_1(n,k)}=(n+o(1))2^{\binom{n-1}{k-1}}$ for $n \ge 2k+3\sqrt{k \ln k}$. Motivated by this result and the theorem of Ellis and Lifshitz, we make the following conjecture.

\begin{conj}
$\card{\cI_r(n,k)}=\left(\binom{n}{r}+o(1)\right)2^{\binom{n}{k}-\binom{n-r}{k}}$ for $r\ge 2$ and $n\ge 2k+C(r)k^{0.9}$, where the term $o(1)$ tends to $0$ as $n\rightarrow \infty$.
\end{conj}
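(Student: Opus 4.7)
The strategy would mirror Section \ref{sec:counting}, replacing sum-free subsets of $[n]$ by intersecting subfamilies of $\binom{[n]}{k}$, Hu's theorem by the Ellis--Lifshitz theorem, and Schur triples by pairs of disjoint $k$-sets. The lower bound is essentially immediate: for each $R\in \binom{[n]}{r}$, every subfamily of the star $\mathrm{St}(R):=\{F\in\binom{[n]}{k}:F\cap R\ne \emptyset\}$ lies in $\cI_r(n,k)$ (partition according to the smallest element of $R$ a set meets); inclusion--exclusion over the $\binom{n}{r}$ stars, combined with $\card{\mathrm{St}(R)\cap \mathrm{St}(R')}\le 2\binom{n-2}{k-1}\ll \binom{n-1}{k-1}$ when $n\ge 2k+C(r)k^{0.9}$, yields the stated lower bound.

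The upper bound would proceed in three steps, paralleling Section \ref{subsec:counting-overview}. \emph{(A) Stability:} first prove an analogue of Proposition \ref{prop:stability}, namely that there exists $\delta=\delta(\eps)\to 0$ such that every $\cF\in \cI_r(n,k)$ with $\card{\cF}\ge \binom{n}{k}-\binom{n-r}{k}-\delta \binom{n-1}{k-1}$ satisfies $\card{\cF\bigtriangleup \mathrm{St}(R)}\le \eps \binom{n-1}{k-1}$ for some $R\in\binom{[n]}{r}$. \emph{(B) Containers:} apply the Balogh--Morris--Samotij / Saxton--Thomason container method to the $(r+1)$-uniform hypergraph $\mathcal{H}$ on vertex set $\binom{[n]}{k}$ whose edges encode minimal $r$-partition obstructions --- for instance, $(r+1)$-sunflowers of pairwise disjoint $k$-sets --- to obtain a collection $\cC$ with $\card{\cC}=2^{o(\binom{n-1}{k-1})}$, each container of size $\binom{n}{k}-\binom{n-r}{k}+o(\binom{n-1}{k-1})$ and containing only $o(\card{V(\mathcal{H})}^{r+1})$ edges. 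An intersecting-family analogue of Green's removal lemma (Lemma \ref{lem:Green-removal}) then turns each container into a genuine $r$-partitionable sub-container of almost the same size, so that by (A) almost every container lies within $\eps\binom{n-1}{k-1}$ of some $\mathrm{St}(R)$. \emph{(C) Counting:} containers far from every star contribute at most $\card{\cC}\cdot 2^{\binom{n}{k}-\binom{n-r}{k}-\Omega(\binom{n-1}{k-1})}=o(2^{\binom{n}{k}-\binom{n-r}{k}})$; for each near-extremal container, one fixes the associated $R$ and bounds the number of $\cF\in \cI_r(n,k)$ with $\card{\cF\setminus \mathrm{St}(R)}\le \eps\binom{n-1}{k-1}$ by $(1+o(1))\cdot 2^{\binom{n}{k}-\binom{n-r}{k}}$, via an argument in the spirit of Lemmas \ref{lem:arithmetic-containers} and \ref{lem:continuous-containers} that uses Janson's inequality (Lemma \ref{lem:Janson-Inequality}) to handle families discarding many sets of $\mathrm{St}(R)$ in favour of sets outside. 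Summing over the $\binom{n}{r}$ choices of $R$ and subtracting the small overcount of families simultaneously close to two stars completes the bound.

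The principal obstacle is Step (A): the Ellis--Lifshitz theorem is proved via junta approximations and hypercontractive inequalities, and does not immediately yield a quantitative stability statement. One viable route is first to classify the `second-best' $r$-wise intersecting families in the style of Ahlswede--Khachatrian, and then to derive stability by a compression-and-shifting argument. A secondary difficulty arises in Step (B): $r$-wise intersection is a partition property rather than a local condition, so designing $\mathcal{H}$ so that its independent sets capture, up to $o(1)$ error, all $r$-partitionable families requires care and may demand an explicit characterisation of the minimal obstructions to being $r$-partitionable; pushing the range down to $n\ge 2k+C(r)k^{0.9}$, slightly below the Ellis--Lifshitz threshold, will probably also require a sharper junta-free step for sets that are somewhat degenerate.
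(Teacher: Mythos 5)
The statement you were given is stated in the paper only as a \emph{conjecture}; the authors give no proof and explicitly pose it as an open problem, so there is no argument in the paper against which to compare your sketch. Treating it as the roadmap it is, the minor slip is in your lower bound: $\card{\mathrm{St}(R)\cap\mathrm{St}(R')}\le 2\binom{n-2}{k-1}$ is false for $r\ge 3$ when $\card{R\cap R'}=r-1$. What the Bonferroni estimate actually needs is $\card{\mathrm{St}(R)\setminus\mathrm{St}(R')}\ge\binom{n-r-1}{k-1}\gg r\log n$ for $R\ne R'$, which does hold, and with that correction your inclusion--exclusion does yield the factor $\binom{n}{r}+o(1)$.

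The serious flaw is in Step~(B). Your proposed $(r+1)$-uniform hypergraph $\mathcal{H}$ on $\binom{[n]}{k}$, with edges the $(r+1)$-tuples of pairwise disjoint $k$-sets, is \emph{empty} in precisely the range the conjecture addresses: for $r\ge 3$ and $n\le 2k+C(r)k^{0.9}$ with $k$ large one has $n<(r+1)k$, so $[n]$ does not contain $r+1$ pairwise disjoint $k$-sets at all, every family is ``independent'', and containers give no information. Even where $\mathcal{H}$ is nonempty (e.g.\ $r=2$, $n\ge 3k$), ``matching number $\le r$'' is far weaker than ``union of $r$ intersecting families''. The correct analogue of the Hancock--Staden--Treglown setup (Lemma \ref{lem:containers}), which handles the partition nature of the property, is a hypergraph on the product $\binom{[n]}{k}\times[r]$ whose edges are the pairs $\{(F,i),(G,i)\}$ with $F\cap G=\emptyset$ and $i\in[r]$ --- that is, $r$ disjoint copies of the Kneser graph. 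Independent sets there project (with the usual overcounting) onto $r$-wise intersecting families, and one would need to verify supersaturation/co-degree hypotheses for the Kneser graph in the given range of $n$, then run the analogue of Lemma \ref{lem:Green-removal} and Proposition \ref{prop:stability} in that product space. Your assessment that a quantitative stability form of Ellis--Lifshitz is the principal missing ingredient for Step~(A) remains the right concern.
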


\section*{Acknowledgement} 
The author was supported by the  Alexander Humboldt Foundation, and by the GACR grant GJ16-07822Y, with institutional support RVO:67985807. He would like to thank Jan Hladky and Phuong Dao for fruitful discussions. He is also grateful to an anonymous referee whose suggestions helped improve and clarify the manuscript.

\appendix
\section{Missing proofs from Section \ref{sec:structure}}
In this appendix, we give the proofs of Lemmas \ref{lem:long-interval}, \ref{lem:summation} and \ref{lem:bootstrap}.

\begin{proof}[Proof of Lemma \ref{lem:long-interval}]
	(i) As $m \in A$, we must have $\card{A\cap \{i,m+i\}} \le 1$ for all $i\in [u,v]$. Hence, by the union bound, we obtain
	\[
	\card{A\cap \left([u,v]\cup [u+m,v+m]\right)} \le \sum_{u \le i \le v}\card{A\cap \{i,m+i\}} \le v-u+1. 
	\]
	
	(ii) Using part (i) with $v=u+m-1$, we find 
	\[
	\card{A\cap [u,u+2m-1]} \le (u+m-1)-u+1=m.
	\]
	
	(iii) Write $v-u=2km+r$, where $k,r \in \bN$ and $0\le r<2m$. It follows easily from part (ii) that $\card{A\cap [u,u+2km-1]}\le km$.
	If $r \le m-1$, then we can trivially evaluate 
	\[
	\card{A\cap [u+2km,u+2km+r]} \le r+1 \le \tfrac12(r+m+1).
	\] 
	If $m\le r<2m$, part (ii) gives $\card{A\cap [u+2km,u+2km+r]} \le m <(r+m+1)/2$. Hence in either case, we always have $\card{A \cap [u,v]} \le km+(r+m+1)/2=(v-u+m+1)/2$.  
\end{proof}

\begin{proof}[Proof of Lemma \ref{lem:summation}]
	We begin by showing that the set $[x,x+k-1]\setminus (A+B)$ has at most $2\eps k$ elements for each integer $x \in [b_1,b_{\el}+1]$. Indeed let $i\in [\el]$ be the largest integer such that $b_i \le x$. For convenience, set $b_{\el+1}=b_{\el}+1$. From the definition of $i$ and the fact that $b_{i+1}-b_i \le k$, we find $[x,x+k-1] \subseteq [b_i,b_{i+1}+k-1]=\{b_i,b_{i+1}\}+[0,k-1]$. Moreover, since $\card{A}\ge (1-\eps)k$, there are at most $2k\eps$ elements in $\{b_i,b_{i+1}\}+[0,k-1]$ which do not belong to $\{b_i,b_{i+1}\}+A$. Hence $[x,x+k-1]$ contains only elements of $A+B$, with at most $2k\eps$ exceptions, as claimed.
	
	Finally, because $[b_1,b_{\el}+k]$ can be covered by at most $(k+b_{\el}-b_1+1)/k+1$ intervals of the form $[x,x+k-1]$ with $x\in [b_1,b_{\el}+1]$, we find
	\[
	\card{A+B} \ge (k+b_{\el}-b_1+1)-2k\eps\cdot \left(\frac{k+b_{\el}-b_1+1}{k}+1\right) \ge (1-4\eps)(k+b_{\el}-b_1+1). \qedhere
	\]
\end{proof}

\begin{proof}[Proof of Lemma \ref{lem:bootstrap}] 
	(i) A proof of this result can be found in \cite[Lemma 2.2]{DFST99}. 
	
	(ii) Denote $s=\card{A}$. We wish to show that $x\in 2A$ for each $x \in [2k-2s+2,2s-2]$. Since $[2k-2s+2,2s-2]=[k-s+1,s-1]+[k-s+1,s-1]$, one has $x=y+z$ for some integers $y,z \in [k-s+1,s-1]$. Note that $y+i, z-i \in [0,k]$ for every integer $i \in [-k+s-1,k-s+1]$, and $\card{[0,k]\setminus A}\le k-s+1$. Thus by the pigeonhole principle, there exists $j \in [-k+s-1,k-s+1]$ so that $y+j, z-j\in A$. We then have $x=(y+j)+(z-j)\in 2A$.
\end{proof}

\end{document}